\newtheorem{theorem}{Theorem}[section]
\newtheorem{axiom}[theorem]{Axiom}
\newtheorem{conjecture}[theorem]{Conjecture}
\newtheorem{corollary}[theorem]{Corollary}
\newtheorem{definition}[theorem]{Definition}
\newtheorem{example}[theorem]{Example}
\newtheorem{exercise}[theorem]{Exercise}
\newtheorem{lemma}[theorem]{Lemma}
\newtheorem{proposition}[theorem]{Proposition}
\newtheorem{remark}[theorem]{Remark}
\newenvironment{proof}[1][Proof]{\noindent\textbf{#1.} }{\ \rule{0.5em}{0.5em}}
\newenvironment{acknowledgment}[1][Acknowledgement]{\textbf{#1} }{}
\renewcommand{\theequation}{\thesection.\arabic{equation}}
\let\pdfoutput=\undefined\fi
\chardef\@x10\chardef\@xv60
\def\tcitime{
\def\@time{%
  \@minute\time\@hour\@minute\divide\@hour\@xv
  \ifnum\@hour<\@x 0\fi\the\@hour:%
  \multiply\@hour\@xv\advance\@minute-\@hour
  \ifnum\@minute<\@x 0\fi\the\@minute
  }}%
\def\x@hyperref#1#2#3{%
   % Turn off various catcodes before reading parameter 4
   \catcode`\~ = 12
   \catcode`\$ = 12
   \catcode`\_ = 12
   \catcode`\# = 12
   \catcode`\& = 12
   \y@hyperref{#1}{#2}{#3}%
}
\def\y@hyperref#1#2#3#4{%
   #2\ref{#4}#3
   \catcode`\~ = 13
   \catcode`\$ = 3
   \catcode`\_ = 8
   \catcode`\# = 6
   \catcode`\& = 4
}
\def\QCTOpt[#1]#2{%
  \def\QCTOptB{#1}
  \def\QCTOptA{#2}
}
\def\QCTNOpt#1{%
  \def\QCTOptA{#1}
  \let\QCTOptB\empty
}
\def\Qct{%
  \@ifnextchar[{%
    \QCTOpt}{\QCTNOpt}
}
\def\QCBOpt[#1]#2{%
  \def\QCBOptB{#1}%
  \def\QCBOptA{#2}%
}
\def\QCBNOpt#1{%
  \def\QCBOptA{#1}%
  \let\QCBOptB\empty
}
\def\Qcb{%
  \@ifnextchar[{%
    \QCBOpt}{\QCBNOpt}%
}
\def\PrepCapArgs{%
  \ifx\QCBOptA\empty
    \ifx\QCTOptA\empty
      {}%
    \else
      \ifx\QCTOptB\empty
        {\QCTOptA}%
      \else
        [\QCTOptB]{\QCTOptA}%
      \fi
    \fi
  \else
    \ifx\QCBOptA\empty
      {}%
    \else
      \ifx\QCBOptB\empty
        {\QCBOptA}%
      \else
        [\QCBOptB]{\QCBOptA}%
      \fi
    \fi
  \fi
}
\def\GRAPHICSPS#1{%
 \ifcase\GRAPHICSTYPE%\GRAPHICSTYPE=0
   \special{ps: #1}%
 \or%\GRAPHICSTYPE=1
   \special{language "PS", include "#1"}%
%%%\or%\GRAPHICSTYPE=2
%%%  #1%
 \fi
}%
\def\graffile#1#2#3#4{%
%%% \ifnum\GRAPHICSTYPE=\tw@
%%%  %Following if using psfig
%%%  \@ifundefined{psfig}{\input psfig.tex}{}%
%%%  \psfig{file=#1, height=#3, width=#2}%
%%% \else
  %Following for all others
  % JCS - added BOXTHEFRAME, see below
    \bgroup
	   \@inlabelfalse
       \leavevmode
       \@ifundefined{bbl@deactivate}{\def~{\string~}}{\activesoff}%
        \raise -#4 \BOXTHEFRAME{%
           \hbox to #2{\raise #3\hbox to #2{\null #1\hfil}}}%
    \egroup
}%
\def\draftbox#1#2#3#4{%
 \leavevmode\raise -#4 \hbox{%
  \frame{\rlap{\protect\tiny #1}\hbox to #2%
   {\vrule height#3 width\z@ depth\z@\hfil}%
  }%
 }%
}%
\let\nographics=\@msidraft
\newif\ifwasdraft
\def\GRAPHIC#1#2#3#4#5{%
   \ifnum\@msidraft=\@ne\draftbox{#2}{#3}{#4}{#5}%
   \else\graffile{#1}{#3}{#4}{#5}%
   \fi
}
\def\addtoLaTeXparams#1{%
    \edef\LaTeXparams{\LaTeXparams #1}}%
\newif\ifBoxFrame \BoxFramefalse
\newif\ifOverFrame \OverFramefalse
\newif\ifUnderFrame \UnderFramefalse
\def\BOXTHEFRAME#1{%
   \hbox{%
      \ifBoxFrame
         \frame{#1}%
      \else
         {#1}%
      \fi
   }%
}
\def\doFRAMEparams#1{\BoxFramefalse\OverFramefalse\UnderFramefalse\readFRAMEparams#1\end}%
\def\readFRAMEparams#1{%
 \ifx#1\end%
  \let\next=\relax
  \else
  \ifx#1i\dispkind=\z@\fi
  \ifx#1d\dispkind=\@ne\fi
  \ifx#1f\dispkind=\tw@\fi
  \ifx#1t\addtoLaTeXparams{t}\fi
  \ifx#1b\addtoLaTeXparams{b}\fi
  \ifx#1p\addtoLaTeXparams{p}\fi
  \ifx#1h\addtoLaTeXparams{h}\fi
  \ifx#1X\BoxFrametrue\fi
  \ifx#1O\OverFrametrue\fi
  \ifx#1U\UnderFrametrue\fi
  \ifx#1w
    \ifnum\@msidraft=1\wasdrafttrue\else\wasdraftfalse\fi
    \@msidraft=\@ne
  \fi
  \let\next=\readFRAMEparams
  \fi
 \next
 }%
\def\IFRAME#1#2#3#4#5#6{%
      \bgroup
      \let\QCTOptA\empty
      \let\QCTOptB\empty
      \let\QCBOptA\empty
      \let\QCBOptB\empty
      #6%
      \parindent=0pt
      \leftskip=0pt
      \rightskip=0pt
      \setbox0=\hbox{\QCBOptA}%
      \@tempdima=#1\relax
      \ifOverFrame
          % Do this later
          \typeout{This is not implemented yet}%
          \show\HELP
      \else
         \ifdim\wd0>\@tempdima
            \advance\@tempdima by \@tempdima
            \ifdim\wd0 >\@tempdima
               \setbox1 =\vbox{%
                  \unskip\hbox to \@tempdima{\hfill\GRAPHIC{#5}{#4}{#1}{#2}{#3}\hfill}%
                  \unskip\hbox to \@tempdima{\parbox[b]{\@tempdima}{\QCBOptA}}%
               }%
               \wd1=\@tempdima
            \else
               \textwidth=\wd0
               \setbox1 =\vbox{%
                 \noindent\hbox to \wd0{\hfill\GRAPHIC{#5}{#4}{#1}{#2}{#3}\hfill}\\%
                 \noindent\hbox{\QCBOptA}%
               }%
               \wd1=\wd0
            \fi
         \else
            \ifdim\wd0>0pt
              \hsize=\@tempdima
              \setbox1=\vbox{%
                \unskip\GRAPHIC{#5}{#4}{#1}{#2}{0pt}%
                \break
                \unskip\hbox to \@tempdima{\hfill \QCBOptA\hfill}%
              }%
              \wd1=\@tempdima
           \else
              \hsize=\@tempdima
              \setbox1=\vbox{%
                \unskip\GRAPHIC{#5}{#4}{#1}{#2}{0pt}%
              }%
              \wd1=\@tempdima
           \fi
         \fi
         \@tempdimb=\ht1
         %\advance\@tempdimb by \dp1
         \advance\@tempdimb by -#2
         \advance\@tempdimb by #3
         \leavevmode
         \raise -\@tempdimb \hbox{\box1}%
      \fi
      \egroup%
}%
\def\DFRAME#1#2#3#4#5{%
  \vspace\topsep
  \hfil\break
  \bgroup
     \leftskip\@flushglue
	 \rightskip\@flushglue
	 \parindent\z@
	 \parfillskip\z@skip
     \let\QCTOptA\empty
     \let\QCTOptB\empty
     \let\QCBOptA\empty
     \let\QCBOptB\empty
	 \vbox\bgroup
        \ifOverFrame 
           #5\QCTOptA\par
        \fi
        \GRAPHIC{#4}{#3}{#1}{#2}{\z@}%
        \ifUnderFrame 
           \break#5\QCBOptA
        \fi
	 \egroup
  \egroup
  \vspace\topsep
  \break
}%
\def\FFRAME#1#2#3#4#5#6#7{%
 %If float.sty loaded and float option is 'h', change to 'H'  (gp) 1998/09/05
  \@ifundefined{floatstyle}
    {%floatstyle undefined (and float.sty not present), no change
     \begin{figure}[#1]%
    }
    {%floatstyle DEFINED
	 \ifx#1h%Only the h parameter, change to H
      \begin{figure}[H]%
	 \else
      \begin{figure}[#1]%
	 \fi
	}
  \let\QCTOptA\empty
  \let\QCTOptB\empty
  \let\QCBOptA\empty
  \let\QCBOptB\empty
  \ifOverFrame
    #4
    \ifx\QCTOptA\empty
    \else
      \ifx\QCTOptB\empty
        \caption{\QCTOptA}%
      \else
        \caption[\QCTOptB]{\QCTOptA}%
      \fi
    \fi
    \ifUnderFrame\else
      \label{#5}%
    \fi
  \else
    \UnderFrametrue%
  \fi
  \begin{center}\GRAPHIC{#7}{#6}{#2}{#3}{\z@}\end{center}%
  \ifUnderFrame
    #4
    \ifx\QCBOptA\empty
      \caption{}%
    \else
      \ifx\QCBOptB\empty
        \caption{\QCBOptA}%
      \else
        \caption[\QCBOptB]{\QCBOptA}%
      \fi
    \fi
    \label{#5}%
  \fi
  \end{figure}%
 }%
\def\makeactives{
  \catcode`\"=\active
  \catcode`\;=\active
  \catcode`\:=\active
  \catcode`\'=\active
  \catcode`\~=\active
}
   \gdef\activesoff{%
      \def"{\string"}%
      \def;{\string;}%
      \def:{\string:}%
      \def'{\string'}%
      \def~{\string~}%
      %\bbl@deactivate{"}%
      %\bbl@deactivate{;}%
      %\bbl@deactivate{:}%
      %\bbl@deactivate{'}%
    }
\def\FRAME#1#2#3#4#5#6#7#8{%
 \bgroup
 \ifnum\@msidraft=\@ne
   \wasdrafttrue
 \else
   \wasdraftfalse%
 \fi
 \def\LaTeXparams{}%
 \dispkind=\z@
 \def\LaTeXparams{}%
 \doFRAMEparams{#1}%
 \ifnum\dispkind=\z@\IFRAME{#2}{#3}{#4}{#7}{#8}{#5}\else
  \ifnum\dispkind=\@ne\DFRAME{#2}{#3}{#7}{#8}{#5}\else
   \ifnum\dispkind=\tw@
    \edef\@tempa{\noexpand\FFRAME{\LaTeXparams}}%
    \@tempa{#2}{#3}{#5}{#6}{#7}{#8}%
    \fi
   \fi
  \fi
  \ifwasdraft\@msidraft=1\else\@msidraft=0\fi{}%
  \egroup
 }%
\def\TEXUX#1{"texux"}
\def\func#1{\mathop{\rm #1}\nolimits}%
\long\def\QQQ#1#2{%
     \long\expandafter\def\csname#1\endcsname{#2}}%
\long\def\QQA#1#2{}%
\def\QTR#1#2{{\csname#1\endcsname {#2}}}%
\def\EXPAND#1[#2]#3{}%
\def\NOEXPAND#1[#2]#3{}%
\def\LaTeXparent#1{}%
\def\ChildStyles#1{}%
\def\ChildDefaults#1{}%
\def\QTagDef#1#2#3{}%
  \providecommand{\UNICODE}[2][]{\protect\rule{.1in}{.1in}}
  \providecommand{\U}[1]{\protect\rule{.1in}{.1in}}
\def\QQfnmark#1{\footnotemark}
 \def\abstract{%
  \if@twocolumn
   \section*{Abstract (Not appropriate in this style!)}%
   \else \small 
   \begin{center}{\bf Abstract\vspace{-.5em}\vspace{\z@}}\end{center}%
   \quotation 
   \fi
  }%
   \def\registered{\relax\ifmmode{}\r@gistered
                    \else$\m@th\r@gistered$\fi}%
 \def\r@gistered{^{\ooalign
  {\hfil\raise.07ex\hbox{$\scriptstyle\rm\text{R}$}\hfil\crcr
  \mathhexbox20D}}}}{}%
\newdimen\theight
\def\newfmtname{LaTeX2e}
  \DeclareOldFontCommand{\rm}{\normalfont\rmfamily}{\mathrm}
  \DeclareOldFontCommand{\sf}{\normalfont\sffamily}{\mathsf}
  \DeclareOldFontCommand{\tt}{\normalfont\ttfamily}{\mathtt}
  \DeclareOldFontCommand{\bf}{\normalfont\bfseries}{\mathbf}
  \DeclareOldFontCommand{\it}{\normalfont\itshape}{\mathit}
  \DeclareOldFontCommand{\sl}{\normalfont\slshape}{\@nomath\sl}
  \DeclareOldFontCommand{\sc}{\normalfont\scshape}{\@nomath\sc}
\def\alpha{{\Greekmath 010B}}%
\def\beta{{\Greekmath 010C}}%
\def\gamma{{\Greekmath 010D}}%
\def\delta{{\Greekmath 010E}}%
\def\epsilon{{\Greekmath 010F}}%
\def\zeta{{\Greekmath 0110}}%
\def\eta{{\Greekmath 0111}}%
\def\theta{{\Greekmath 0112}}%
\def\iota{{\Greekmath 0113}}%
\def\kappa{{\Greekmath 0114}}%
\def\lambda{{\Greekmath 0115}}%
\def\mu{{\Greekmath 0116}}%
\def\nu{{\Greekmath 0117}}%
\def\xi{{\Greekmath 0118}}%
\def\pi{{\Greekmath 0119}}%
\def\rho{{\Greekmath 011A}}%
\def\sigma{{\Greekmath 011B}}%
\def\tau{{\Greekmath 011C}}%
\def\upsilon{{\Greekmath 011D}}%
\def\phi{{\Greekmath 011E}}%
\def\chi{{\Greekmath 011F}}%
\def\psi{{\Greekmath 0120}}%
\def\omega{{\Greekmath 0121}}%
\def\varepsilon{{\Greekmath 0122}}%
\def\vartheta{{\Greekmath 0123}}%
\def\varpi{{\Greekmath 0124}}%
\def\varrho{{\Greekmath 0125}}%
\def\varsigma{{\Greekmath 0126}}%
\def\varphi{{\Greekmath 0127}}%
\def\nabla{{\Greekmath 0272}}
\def\FindBoldGroup{%
   {\setbox0=\hbox{$\mathbf{x\global\edef\theboldgroup{\the\mathgroup}}$}}%
}
\def\Greekmath#1#2#3#4{%
    \if@compatibility
        \ifnum\mathgroup=\symbold
           \mathchoice{\mbox{\boldmath$\displaystyle\mathchar"#1#2#3#4$}}%
                      {\mbox{\boldmath$\textstyle\mathchar"#1#2#3#4$}}%
                      {\mbox{\boldmath$\scriptstyle\mathchar"#1#2#3#4$}}%
                      {\mbox{\boldmath$\scriptscriptstyle\mathchar"#1#2#3#4$}}%
        \else
           \mathchar"#1#2#3#4% 
        \fi 
    \else 
        \FindBoldGroup
        \ifnum\mathgroup=\theboldgroup % For 2e
           \mathchoice{\mbox{\boldmath$\displaystyle\mathchar"#1#2#3#4$}}%
                      {\mbox{\boldmath$\textstyle\mathchar"#1#2#3#4$}}%
                      {\mbox{\boldmath$\scriptstyle\mathchar"#1#2#3#4$}}%
                      {\mbox{\boldmath$\scriptscriptstyle\mathchar"#1#2#3#4$}}%
        \else
           \mathchar"#1#2#3#4% 
        \fi     	    
	  \fi}
\newif\ifGreekBold  \GreekBoldfalse
\let\SAVEPBF=\pbf
\def\pbf{\GreekBoldtrue\SAVEPBF}%
  \newcounter{equationnumber}  
  \def\mathletters{%
     \addtocounter{equation}{1}
     \edef\@currentlabel{\theequation}%
     \setcounter{equationnumber}{\c@equation}
     \setcounter{equation}{0}%
     \edef\theequation{\@currentlabel\noexpand\alph{equation}}%
  }
    \def\BibTeX{{\rm B\kern-.05em{\sc i\kern-.025em b}\kern-.08em
                 T\kern-.1667em\lower.7ex\hbox{E}\kern-.125emX}}}{}%
\def\AmS{{\protect\usefont{OMS}{cmsy}{m}{n}%
                A\kern-.1667em\lower.5ex\hbox{M}\kern-.125emS}}}{}%
\def\@@eqncr{\let\@tempa\relax
    \ifcase\@eqcnt \def\@tempa{& & &}\or \def\@tempa{& &}%
      \else \def\@tempa{&}\fi
     \@tempa
     \if@eqnsw
        \iftag@
           \@taggnum
        \else
           \@eqnnum\stepcounter{equation}%
        \fi
     \fi
     \global\tag@false
     \global\@eqnswtrue
     \global\@eqcnt\z@\cr}
\def\TCItag{\@ifnextchar*{\@TCItagstar}{\@TCItag}}
\def\@TCItag#1{%
    \global\tag@true
    \global\def\@taggnum{(#1)}%
    \global\def\@currentlabel{#1}}
\def\@TCItagstar*#1{%
    \global\tag@true
    \global\def\@taggnum{#1}%
    \global\def\@currentlabel{#1}}
\def\ExitTCILatex{\makeatother }
\if@compatibility\message{amsmath already loaded}\fi\aftergroup\ExitTCILatex}
\if@compatibility\message{amstex already loaded}\fi\aftergroup\ExitTCILatex}
\if@compatibility\message{amsgen already loaded}\fi\aftergroup\ExitTCILatex}
\let\DOTSI\relax
\def\RIfM@{\relax\ifmmode}%
\def\FN@{\futurelet\next}%
\def\iint{\DOTSI\intno@\tw@\FN@\ints@}%
\def\iiint{\DOTSI\intno@\thr@@\FN@\ints@}%
\def\iiiint{\DOTSI\intno@4 \FN@\ints@}%
\def\idotsint{\DOTSI\intno@\z@\FN@\ints@}%
\def\ints@{\findlimits@\ints@@}%
\newif\iflimtoken@
\newif\iflimits@
\def\findlimits@{\limtoken@true\ifx\next\limits\limits@true
 \else\ifx\next\nolimits\limits@false\else
 \limtoken@false\ifx\ilimits@\nolimits\limits@false\else
 \ifinner\limits@false\else\limits@true\fi\fi\fi\fi}%
\def\multint@{\int\ifnum\intno@=\z@\intdots@                          %1
 \else\intkern@\fi                                                    %2
 \ifnum\intno@>\tw@\int\intkern@\fi                                   %3
 \ifnum\intno@>\thr@@\int\intkern@\fi                                 %4
 \int}%                                                               %5
\def\multintlimits@{\intop\ifnum\intno@=\z@\intdots@\else\intkern@\fi
 \ifnum\intno@>\tw@\intop\intkern@\fi
 \ifnum\intno@>\thr@@\intop\intkern@\fi\intop}%
\def\intic@{%
    \mathchoice{\hskip.5em}{\hskip.4em}{\hskip.4em}{\hskip.4em}}%
\def\negintic@{\mathchoice
 {\hskip-.5em}{\hskip-.4em}{\hskip-.4em}{\hskip-.4em}}%
\def\ints@@{\iflimtoken@                                              %1
 \def\ints@@@{\iflimits@\negintic@
   \mathop{\intic@\multintlimits@}\limits                             %2
  \else\multint@\nolimits\fi                                          %3
  \eat@}%                                                             %4
 \else                                                                %5
 \def\ints@@@{\iflimits@\negintic@
  \mathop{\intic@\multintlimits@}\limits\else
  \multint@\nolimits\fi}\fi\ints@@@}%
\def\intkern@{\mathchoice{\!\!\!}{\!\!}{\!\!}{\!\!}}%
\def\plaincdots@{\mathinner{\cdotp\cdotp\cdotp}}%
\def\intdots@{\mathchoice{\plaincdots@}%
 {{\cdotp}\mkern1.5mu{\cdotp}\mkern1.5mu{\cdotp}}%
 {{\cdotp}\mkern1mu{\cdotp}\mkern1mu{\cdotp}}%
 {{\cdotp}\mkern1mu{\cdotp}\mkern1mu{\cdotp}}}%
\def\RIfM@{\relax\protect\ifmmode}
\def\text{\RIfM@\expandafter\text@\else\expandafter\mbox\fi}
\let\nfss@text\text
\def\text@#1{\mathchoice
   {\textdef@\displaystyle\f@size{#1}}%
   {\textdef@\textstyle\tf@size{\firstchoice@false #1}}%
   {\textdef@\textstyle\sf@size{\firstchoice@false #1}}%
   {\textdef@\textstyle \ssf@size{\firstchoice@false #1}}%
   \glb@settings}
\def\textdef@#1#2#3{\hbox{{%
                    \everymath{#1}%
                    \let\f@size#2\selectfont
                    #3}}}
\newif\iffirstchoice@
\def\Let@{\relax\iffalse{\fi\let\\=\cr\iffalse}\fi}%
\def\vspace@{\def\vspace##1{\crcr\noalign{\vskip##1\relax}}}%
\def\multilimits@{\bgroup\vspace@\Let@
 \baselineskip\fontdimen10 \scriptfont\tw@
 \advance\baselineskip\fontdimen12 \scriptfont\tw@
 \lineskip\thr@@\fontdimen8 \scriptfont\thr@@
 \lineskiplimit\lineskip
 \vbox\bgroup\ialign\bgroup\hfil$\m@th\scriptstyle{##}$\hfil\crcr}%
\def\Sb{_\multilimits@}%
\def\endSb{\crcr\egroup\egroup\egroup}%
\def\Sp{^\multilimits@}%
\newdimen\ex@
\def\rightarrowfill@#1{$#1\m@th\mathord-\mkern-6mu\cleaders
 \hbox{$#1\mkern-2mu\mathord-\mkern-2mu$}\hfill
 \mkern-6mu\mathord\rightarrow$}%
\def\leftarrowfill@#1{$#1\m@th\mathord\leftarrow\mkern-6mu\cleaders
 \hbox{$#1\mkern-2mu\mathord-\mkern-2mu$}\hfill\mkern-6mu\mathord-$}%
\def\leftrightarrowfill@#1{$#1\m@th\mathord\leftarrow
\mkern-6mu\cleaders
 \hbox{$#1\mkern-2mu\mathord-\mkern-2mu$}\hfill
 \mkern-6mu\mathord\rightarrow$}%
\def\overrightarrow{\mathpalette\overrightarrow@}%
\def\overrightarrow@#1#2{\vbox{\ialign{##\crcr\rightarrowfill@#1\crcr
 \noalign{\kern-\ex@\nointerlineskip}$\m@th\hfil#1#2\hfil$\crcr}}}%
\def\overleftarrow{\mathpalette\overleftarrow@}%
\def\overleftarrow@#1#2{\vbox{\ialign{##\crcr\leftarrowfill@#1\crcr
 \noalign{\kern-\ex@\nointerlineskip}$\m@th\hfil#1#2\hfil$\crcr}}}%
\def\overleftrightarrow{\mathpalette\overleftrightarrow@}%
\def\overleftrightarrow@#1#2{\vbox{\ialign{##\crcr
   \leftrightarrowfill@#1\crcr
 \noalign{\kern-\ex@\nointerlineskip}$\m@th\hfil#1#2\hfil$\crcr}}}%
\def\underrightarrow{\mathpalette\underrightarrow@}%
\def\underrightarrow@#1#2{\vtop{\ialign{##\crcr$\m@th\hfil#1#2\hfil
  $\crcr\noalign{\nointerlineskip}\rightarrowfill@#1\crcr}}}%
\def\underleftarrow{\mathpalette\underleftarrow@}%
\def\underleftarrow@#1#2{\vtop{\ialign{##\crcr$\m@th\hfil#1#2\hfil
  $\crcr\noalign{\nointerlineskip}\leftarrowfill@#1\crcr}}}%
\def\underleftrightarrow{\mathpalette\underleftrightarrow@}%
\def\underleftrightarrow@#1#2{\vtop{\ialign{##\crcr$\m@th
  \hfil#1#2\hfil$\crcr
 \noalign{\nointerlineskip}\leftrightarrowfill@#1\crcr}}}%
\def\qopnamewl@#1{\mathop{\operator@font#1}\nlimits@}
\let\nlimits@\displaylimits
\def\setboxz@h{\setbox\z@\hbox}
\def\varlim@#1#2{\mathop{\vtop{\ialign{##\crcr
 \hfil$#1\m@th\operator@font lim$\hfil\crcr
 \noalign{\nointerlineskip}#2#1\crcr
 \noalign{\nointerlineskip\kern-\ex@}\crcr}}}}
 \def\rightarrowfill@#1{\m@th\setboxz@h{$#1-$}\ht\z@\z@
  $#1\copy\z@\mkern-6mu\cleaders
  \hbox{$#1\mkern-2mu\box\z@\mkern-2mu$}\hfill
  \mkern-6mu\mathord\rightarrow$}
\def\leftarrowfill@#1{\m@th\setboxz@h{$#1-$}\ht\z@\z@
  $#1\mathord\leftarrow\mkern-6mu\cleaders
  \hbox{$#1\mkern-2mu\copy\z@\mkern-2mu$}\hfill
  \mkern-6mu\box\z@$}
\def\projlim{\qopnamewl@{proj\,lim}}
\def\injlim{\qopnamewl@{inj\,lim}}
\def\varinjlim{\mathpalette\varlim@\rightarrowfill@}
\def\varprojlim{\mathpalette\varlim@\leftarrowfill@}
\def\varliminf{\mathpalette\varliminf@{}}
\def\varliminf@#1{\mathop{\underline{\vrule\@depth.2\ex@\@width\z@
   \hbox{$#1\m@th\operator@font lim$}}}}
\def\varlimsup{\mathpalette\varlimsup@{}}
\def\varlimsup@#1{\mathop{\overline
  {\hbox{$#1\m@th\operator@font lim$}}}}
\def\align{\@verbatim \frenchspacing\@vobeyspaces \@alignverbatim
You are using the "align" environment in a style in which it is not defined.}
\let\csname endalign*\endcsname =\endtrivlist
\def\alignat{\@verbatim \frenchspacing\@vobeyspaces \@alignatverbatim
You are using the "alignat" environment in a style in which it is not defined.}
\let\csname endalignat*\endcsname =\endtrivlist
\def\xalignat{\@verbatim \frenchspacing\@vobeyspaces \@xalignatverbatim
You are using the "xalignat" environment in a style in which it is not defined.}
\let\csname endxalignat*\endcsname =\endtrivlist
\def\gather{\@verbatim \frenchspacing\@vobeyspaces \@gatherverbatim
You are using the "gather" environment in a style in which it is not defined.}
\let\csname endgather*\endcsname =\endtrivlist
\def\multiline{\@verbatim \frenchspacing\@vobeyspaces \@multilineverbatim
You are using the "multiline" environment in a style in which it is not defined.}
\let\csname endmultiline*\endcsname =\endtrivlist
\def\arrax{\@verbatim \frenchspacing\@vobeyspaces \@arraxverbatim
You are using a type of "array" construct that is only allowed in AmS-LaTeX.}
\def\tabulax{\@verbatim \frenchspacing\@vobeyspaces \@tabulaxverbatim
You are using a type of "tabular" construct that is only allowed in AmS-LaTeX.}
\let\csname endarrax*\endcsname =\endtrivlist
\let\csname endtabulax*\endcsname =\endtrivlist
 \def\endequation{%
     \ifmmode\ifinner % FLEQN hack
      \iftag@
        \addtocounter{equation}{-1} % undo the increment made in the begin part
        $\hfil
           \displaywidth\linewidth\@taggnum\egroup \endtrivlist
        \global\tag@false
        \global\@ignoretrue   
      \else
        $\hfil
           \displaywidth\linewidth\@eqnnum\egroup \endtrivlist
        \global\tag@false
        \global\@ignoretrue 
      \fi
     \else   
      \iftag@
        \addtocounter{equation}{-1} % undo the increment made in the begin part
        \eqno \hbox{\@taggnum}
        \global\tag@false%
        $$\global\@ignoretrue
      \else
        \eqno \hbox{\@eqnnum}% $$ BRACE MATCHING HACK
        $$\global\@ignoretrue
      \fi
     \fi\fi
 } 
 \newif\iftag@ \tag@false
 \def\TCItag{\@ifnextchar*{\@TCItagstar}{\@TCItag}}
 \def\@TCItag#1{%
     \global\tag@true
     \global\def\@taggnum{(#1)}%
     \global\def\@currentlabel{#1}}
 \def\@TCItagstar*#1{%
     \global\tag@true
     \global\def\@taggnum{#1}%
     \global\def\@currentlabel{#1}}
     \def\tag{\@ifnextchar*{\@tagstar}{\@tag}}
     \def\@tag#1{%
         \global\tag@true
         \global\def\@taggnum{(#1)}}
     \def\@tagstar*#1{%
         \global\tag@true
         \global\def\@taggnum{#1}}
\begin{document}

\title{Short-time behaviour of a modified Laplacian coflow of $G_{2}$%
-structures}
\author{Sergey Grigorian \\
%EndAName
Simons Center for Geometry and Physics\\
Stony Brook University\\
Stony Brook, NY 11794\\
USA}
\maketitle

\begin{abstract}
We modify the Laplacian coflow of co-closed $G_{2}$-structures - $\frac{d}{dt%
}\psi =\Delta \psi $ where $\psi $ is the closed dual $4$-form of a $G_{2}$%
-structure $\varphi $. The modified flow is now weakly parabolic in the
direction of closed forms up to diffeomorphisms. We then prove short time
existence and uniqueness of solutions to the modified flow.
\end{abstract}

\section{Introduction}

\setcounter{equation}{0}Ever since the Ricci flow has been introduced by
Richard Hamilton \cite{Hamilton3folds}, geometric flows have played an
important role in the study of geometric structures. The general idea is to
begin with some general geometric structure on a manifold and then use some
flow to obtain a more special structure. In the study of $G_{2}$-structures
on $7$-dimensional manifolds, an important question is under what conditions
is it possible to obtain a torsion-free $G_{2}$-structure (which corresponds
to manifolds with holonomy contained in $G_{2}$) from a $G_{2}$-structure
with some other torsion class. To do this, one can either attempt a
non-infinitesimal deformation of the $G_{2}$-structure \cite%
{GrigorianG2Torsion1, GrigorianG2TorsionWarped,karigiannis-2005-57}, or one
can try to construct a flow which interpolates between different torsion
classes. The first such flow has been proposed by Bryant \cite{bryant-2003}
- if we start with a closed $G_{2}$-structure, that is one for which the
defining $3$-form $\varphi $ satisfies 
\begin{equation*}
d\varphi =0
\end{equation*}%
then the Laplacian $\Delta \varphi =dd^{\ast }\varphi $ is an exact form,
and hence Laplacian flow of $\varphi $ 
\begin{equation}
\frac{d\varphi }{dt}=\Delta _{\varphi }\varphi  \label{orig-flow}
\end{equation}%
preserves the cohomology class, and we get a flow of closed $G_{2}$%
-structures within the same cohomology class. Here $\Delta _{\varphi }$
denotes the Hodge Laplacian with respect to the metric $g_{\varphi }$
associated to the $G_{2}$-structure $\varphi $.

Suppose now $M$ is a compact $7$-dimensional manifold with a $G_{2}$%
-structure $\varphi $. It is then possible to interpret the flow (\ref%
{orig-flow}) as a gradient flow of the volume functional $V$ \cite{BryantXu} 
\begin{equation}
V\left( \varphi \right) =\frac{1}{7}\int_{M}\varphi \wedge \ast \varphi
\label{volfunc}
\end{equation}%
Then, the functional $V$ increases monotonically along this flow. As shown
by Hitchin \cite{Hitchin:2000jd-arxiv}, for a closed $G_{2}$-structure $%
\varphi $, $V$ attains a critical point within the fixed cohomology class of 
$\varphi $ whenever $d\ast \varphi =0$, that is, if and only if $\varphi $
defines a torsion-free $G_{2}$-structure. Therefore, it is to be expected
that if a long-time smooth solution to (\ref{orig-flow}) exists, then it
should converge to a torsion-free $G_{2}$-structure. In \cite{BryantXu,XuYe}
it is shown that after applying a version of the DeTurck Trick (named after
DeTurck's proof of short time existence and uniqueness of Ricci flow
solutions \cite{DeTurckTrick}), the flow (\ref{orig-flow}) can be related to
a modified flow that is parabolic along closed forms. This was then used to
prove short time existence and uniqueness of solutions. Moreover, in \cite%
{XuYe}, it was shown that if the initial closed $G_{2}$-structure $\varphi
_{0}$ is near a torsion-free structure $\varphi _{1}$ then the flow (\ref%
{orig-flow}) converges to a torsion-free $G_{2}$-structure $\varphi _{\infty
}$ which is related to $\varphi _{1}$ via a diffeomorphism.

Other flows of $G_{2}$-structures have also been proposed. In \cite%
{karigiannis-2007}, Karigiannis studied the properties of general flows of $%
\varphi $ by an arbitrary $3$-form. In \cite{WeissWitt1, WeissWitt2}, Weiss
and Witt made significant progress while studying gradient flows of
Dirichlet-type functionals for $G_{2}$-structures. They have obtained
short-time existence and uniqueness results as well as long-time convergence
to a torsion-free $G_{2}$-structure if the initial condition is sufficiently
close to a torsion-free $G_{2}$-structure, i.e. stability of the flow.

Most of the flows studied focused on flowing the $3$-form $\varphi $.
However, a $G_{2}\,$-structure can also be defined by the dual $4$-form $%
\ast \varphi $, which we will denote by $\psi .$ It is then natural to
consider the analogue of the Laplacian flow (\ref{orig-flow}), but for the $%
4 $-form. Such a flow,%
\begin{equation}
\frac{d\psi }{dt}=\Delta _{\psi }\psi  \label{orig-coflow}
\end{equation}%
named the Laplacian coflow of $G_{2}$-structures, has originally been
proposed by Karigiannis, McKay and Tsui in \cite{KarigiannisMcKayTsui}. Here 
$\Delta _{\psi }$ is the Hodge Laplacian defined by the metric $g_{\psi }$
which is the metric associated to the $G_{2}$-structure defined by $\psi $.
Note that in \cite{KarigiannisMcKayTsui}, there was a minus sign on the
right hand side of (\ref{orig-coflow}). The flow (\ref{orig-coflow}) shares
a number of properties with (\ref{orig-flow}). In particular, if we start
with $\psi $ closed, that is, a co-closed $G_{2}$-structure, then (\ref%
{orig-coflow}) preserves the cohomology class of $\psi $. Thus we get a flow
of co-closed $G_{2}$-structures. The volume functional \ can be restated in
terms of $\psi $, and then it attains a critical point within the cohomology
class of $\psi $ whenever $d\ast \psi =d\varphi =0$. The flow (\ref%
{orig-coflow}) can then also be interpreted as a gradient flow of the volume
functional, and it is easy to see that the volume grows monotonically along
this flow. While qualitatively some of the properties are similar to the
Laplacian flow on $3$-forms, the initial conditions are completely different
- in (\ref{orig-flow}) we start from a closed $G_{2}$-structure, while in (%
\ref{orig-coflow}), we start from a co-closed $G_{2}$-structure.

In this paper we study the analytical properties of the flow (\ref%
{orig-coflow}). It turns out that despite the similarities with the $3$-form
flow (\ref{orig-flow}), the $4$-form flow cannot be related to a flow that
is strictly parabolic in the direction of closed forms using
diffeomorphisms. Therefore, we propose a modified version of (\ref%
{orig-coflow}), given by 
\begin{equation}
\frac{d\psi }{dt}=\Delta _{\psi }\psi +2d\left( \left( A-\func{Tr}T\right)
\varphi \right)  \label{mod-coflow}
\end{equation}%
Here $\func{Tr}T$ is the trace of the full torsion tensor $T$ of the $G_{2}$%
-structure defined by $\psi $, and $A$ is a positive constant. This flow is
now weakly parabolic in the direction of closed forms and hence it is
possible to relate it to a strictly parabolic flow using an application of
deTurck's trick. The flow (\ref{mod-coflow}) still preserves the cohomology
class of $\psi $ and if $\func{Tr}T$ is small enough in some sense, the
volume functional grows along the flow. We then use the techniques from (%
\cite{BryantXu}) to show short-time existence and uniqueness for this flow.

The outline of the paper is as follows. In Section \ref{secg2struct} we give
a brief overview of the properties of $G_{2}$-structures and their torsion.
In Section \ref{SecDeformPsi} we consider the deformations of $G_{2}$%
-structures in terms of deformations of the $4$-form $\psi $, and in Section %
\ref{SecLapPsi} we consider the properties of $\Delta _{\psi }\psi $,
including its linearization. The modified flow (\ref{mod-coflow}) is then
defined in Section \ref{SecModFlow}, and in Section \ref{SecShortTime} we
prove the short time existence and uniqueness of solutions of (\ref%
{mod-coflow}). For that we adapt the techniques used by Bryant and Xu in 
\cite{BryantXu} for the Laplacian flow of $\varphi $.

\begin{acknowledgment}
I would like to thank the anonymous referee for very helpful remarks and
suggestions.
\end{acknowledgment}

\section{$G_{2}$-structures and torsion}

\setcounter{equation}{0}\label{secg2struct}The 14-dimensional group $G_{2}$
is the smallest of the five exceptional Lie groups and is closely related to
the octonions. In particular, $G_{2}$ can be defined as the automorphism
group of the octonion algebra. Taking the imaginary part of octonion
multiplication of the imaginary octonions defines a vector cross product on $%
V=\mathbb{R}^{7}$ and the group that preserves the vector cross product is
precisely $G_{2}$. A more detailed account of the relationship between
octonions and $G_{2}$ can be found in \cite{BaezOcto, GrigorianG2Review}.The
structure constants of the vector cross product define a $3$-form on $%
\mathbb{R}^{7}$, hence $G_{2}$ can alternatively be defined as the subgroup
of $GL\left( 7,\mathbb{R}\right) $ that preserves a particular $3$-form \cite%
{Joycebook}. In general, given an $n$-dimensional manifold $M$, a $G$%
-structure on $M$ for some Lie subgroup $G$ of $GL\left( n,\mathbb{R}\right) 
$ is a reduction of the frame bundle $F$ over $M$ to a principal subbundle $%
P $ with fibre $G$. A $G_{2}$-structure is then a reduction of the frame
bundle on a $7$-dimensional manifold $M$ to a $G_{2}$ principal subbundle.
It turns out that there is a $1$-$1$ correspondence between $G_{2}$%
-structures on a $7$-manifold and smooth $3$-forms $\varphi $ for which the $%
7$-form-valued bilinear form $B_{\varphi }$ as defined by (\ref{Bphi}) is
positive definite (for more details, see \cite{Bryant-1987} and the arXiv
version of \cite{Hitchin:2000jd}). 
\begin{equation}
B_{\varphi }\left( u,v\right) =\frac{1}{6}\left( u\lrcorner \varphi \right)
\wedge \left( v\lrcorner \varphi \right) \wedge \varphi  \label{Bphi}
\end{equation}%
Here the symbol $\lrcorner $ denotes contraction of a vector with the
differential form: 
\begin{equation*}
\left( u\lrcorner \varphi \right) _{mn}=u^{a}\varphi _{amn}.
\end{equation*}%
Note that we will also use this symbol for contractions of differential
forms using the metric.

A smooth $3$-form $\varphi $ is said to be \emph{positive }if $B_{\varphi }$
is the tensor product of a positive-definite bilinear form and a
nowhere-vanishing $7$-form. In this case, it defines a unique metric $%
g_{\varphi }$ and volume form $\mathrm{vol}$ such that for vectors $u$ and $%
v $, the following holds 
\begin{equation}
g_{\varphi }\left( u,v\right) \mathrm{vol}=\frac{1}{6}\left( u\lrcorner
\varphi \right) \wedge \left( v\lrcorner \varphi \right) \wedge \varphi
\label{gphi}
\end{equation}

In components we can rewrite this as 
\begin{equation}
\left( g_{\varphi }\right) _{ab}=\left( \det s\right) ^{-\frac{1}{9}}s_{ab}\ 
\text{where \ }s_{ab}=\frac{1}{144}\varphi _{amn}\varphi _{bpq}\varphi _{rst}%
\hat{\varepsilon}^{mnpqrst}.  \label{metricdefdirect}
\end{equation}%
Here $\hat{\varepsilon}^{mnpqrst}$ is the alternating symbol with $\hat{%
\varepsilon}^{12...7}=+1$. Following Joyce (\cite{Joycebook}), we will adopt
the following definition

\begin{definition}
Let $M$ be an oriented $7$-manifold. The pair $\left( \varphi ,g\right) $
for a positive $3$-form $\varphi $ and corresponding metric $g$ defined by (%
\ref{gphi}) will be referred to as a $G_{2}$-structure.
\end{definition}

Since a $G_{2}$-structure defines a metric and an orientation, it also
defines a Hodge star. Thus we can construct another $G_{2}$-invariant object
- the $4$-form $\ast \varphi $. Since the Hodge star is defined by the
metric, which in turn is defined by $\varphi $, the $4$-form $\ast \varphi $
depends non-linearly on $\varphi $. For convenience we will usually denote $%
\ast \varphi $ by $\psi $. We can also write down various contraction
identities for a $G_{2}$-structure $\left( \varphi ,g\right) $ and its
corresponding $4$-form $\psi $ \cite%
{bryant-2003,GrigorianYau1,karigiannis-2007}.

\begin{proposition}
\label{propcontractions}The $3$-form $\varphi $ and the corresponding $4$%
-form $\psi $ satisfy the following identities: 
%TCIMACRO{\TeXButton{TeX field}{\begin{subequations}}}%
%BeginExpansion
\begin{subequations}%
%EndExpansion
\label{contids} 
\begin{eqnarray}
\varphi _{abc}\varphi _{mn}^{\ \ \ \ c} &=&g_{am}g_{bn}-g_{an}g_{bm}+\psi
_{abmn}  \label{phiphi1} \\
\varphi _{abc}\psi _{mnp}^{\ \ \ \ \ \ c} &=&3\left( g_{a[m}\varphi
_{np]b}-g_{b[m}\varphi _{np]a}\right)  \label{phipsi} \\
\psi _{abcd}\psi ^{mnpq} &=&24\delta _{a}^{[m}\delta _{b}^{n}\delta
_{c}^{p}\delta _{d}^{q]}+72\psi _{\lbrack ab}^{\ \ [mn}\delta _{c}^{p}\delta
_{d]}^{q]}-16\varphi _{\lbrack abc}^{{}}\varphi _{{}}^{[mnp}\delta _{d]}^{q]}
\label{psipsi0}
\end{eqnarray}%
%TCIMACRO{\TeXButton{TeX field}{\end{subequations}}}%
%BeginExpansion
\end{subequations}%
%EndExpansion
where $\left[ m\ n\ p\right] $ denotes antisymmetrization of indices and $%
\delta _{a}^{b}$ is the Kronecker delta, with $\delta _{b}^{a}=1$ if $a=b$
and $0$ otherwise.
\end{proposition}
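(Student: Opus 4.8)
The plan is to derive all three identities from the single fundamental relation (\ref{phiphi1}), which I take as the starting point — it is the defining property of a cross product on $\mathbb{R}^7$ compatible with the metric and $4$-form, and can itself be checked once and for all in a standard orthonormal basis where $\varphi$ has its model form. Since both sides of each identity are $G_2$-equivariant tensors built algebraically from $\varphi$, $\psi=\ast\varphi$ and $g$, and since the pointwise stabilizer is $G_2$, it suffices to verify each identity in one such basis; but the more instructive route, which I would present, is to obtain (\ref{phipsi}) and (\ref{psipsi0}) by repeated contraction of (\ref{phiphi1}) with further copies of $\varphi$ and $\psi$. The one genuine input from Hodge duality I need is $\psi_{abcd} = \tfrac{1}{6}\varphi^{efg}\,\varepsilon_{efgabcd}$ (equivalently $\varphi\wedge\psi$ and the relation $\ast\varphi=\psi$), together with the contraction identity for two $\varepsilon$'s in dimension $7$.

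First I would establish (\ref{phipsi}). Starting from (\ref{phiphi1}) with indices $\varphi_{abe}\varphi_{cd}{}^{e} = g_{ac}g_{bd}-g_{ad}g_{bc}+\psi_{abcd}$, multiply by $\varphi^{cd}{}_{p}$ and use (\ref{phiphi1}) again on the left to re-express $\varphi^{cd}{}_p\varphi_{cd}{}^{e}$; on the right one gets a combination of $\varphi$ terms plus a $\psi_{abcd}\varphi^{cd}{}_{p}$ term. Solving the resulting linear relation for $\psi_{abcd}\varphi^{cd}{}_{p}$ and then lowering/rearranging indices yields the stated form $\varphi_{abc}\psi_{mnp}{}^{c}=3(g_{a[m}\varphi_{np]b}-g_{b[m}\varphi_{np]a})$. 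The bookkeeping here is the antisymmetrization: one has to recognize the right-hand combination as the projection onto the appropriate irreducible piece. I would present the normalization check (setting specific index values) rather than a fully index-free argument.

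Next, for (\ref{psipsi0}) I would contract $\psi_{abcd}\psi^{mnpq}$ by writing $\psi = \tfrac16\iota_\varphi\varepsilon$ on both factors and invoking the $7$-dimensional identity $\varepsilon_{i_1\cdots i_7}\varepsilon^{j_1\cdots j_7} = 7!\,\delta^{[j_1}_{i_1}\cdots\delta^{j_7]}_{i_7}$; expanding the resulting generalized Kronecker delta and collapsing the contracted $\varphi\varphi$ pairs via (\ref{phiphi1}) (which reintroduces $g$'s, $\psi$'s, and, through one more application, $\varphi\varphi$ terms) produces the three groups of terms on the right with coefficients $24$, $72$, $-16$. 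The main obstacle — and the only part that is more than mechanical — is getting these numerical coefficients and the antisymmetrization brackets exactly right: the expansion of $\delta^{[j_1}_{i_1}\cdots\delta^{j_7]}_{i_7}$ has many terms, and several collapse together after using the symmetries of $\varphi$ and $\psi$, so careful combinatorial counting (or, as a cross-check, evaluating both sides on the model $\varphi$ with a symbolic computation) is needed. Once (\ref{psipsi0}) is in hand, the proposition is complete; identities (\ref{phiphi1})–(\ref{psipsi0}) are exactly the ones cited in the literature \cite{bryant-2003,GrigorianYau1,karigiannis-2007}, so the normalization conventions can be matched against those references.
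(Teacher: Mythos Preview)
The paper does not actually prove Proposition~\ref{propcontractions}; it is stated as a known result with citations to \cite{bryant-2003,GrigorianYau1,karigiannis-2007}, and the text immediately following simply remarks that further contractions can be found in those references. So there is no ``paper's own proof'' to compare against. Your overall plan --- take (\ref{phiphi1}) as the fundamental input verifiable in a model basis, then bootstrap (\ref{phipsi}) and (\ref{psipsi0}) by further contractions and the $\varepsilon$-identity --- is exactly the standard route taken in those references (particularly \cite{karigiannis-2007}), and is correct.

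One small correction to your sketch for (\ref{phipsi}): the procedure you describe --- multiplying (\ref{phiphi1}) by $\varphi^{cd}{}_{p}$ and using $\varphi^{cd}{}_{p}\varphi_{cd}{}^{e}=6\delta^{e}_{p}$ on the left --- yields the \emph{double} contraction $\psi_{abcd}\varphi^{cd}{}_{p}=4\varphi_{abp}$, not the single contraction (\ref{phipsi}). To obtain (\ref{phipsi}) you should instead substitute $\psi_{mnpq}=\varphi_{mne}\varphi_{pq}{}^{e}-g_{mp}g_{nq}+g_{mq}g_{np}$ (which is just (\ref{phiphi1}) rearranged) into $\varphi_{abc}\psi_{mnp}{}^{c}$ and then apply (\ref{phiphi1}) once more to the resulting $\varphi_{abc}\varphi_{p}{}^{ce}$ factor. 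This is a minor rerouting, not a genuine gap; the rest of your plan stands.
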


The above identities can be of course further contracted - the details can
be found in \cite{GrigorianYau1,karigiannis-2007}. These identities and
their contractions are crucial whenever any tensorial calculations involving 
$\varphi $ and $\psi $ have to be done.

For a general $G$-structure, the spaces of $p$-forms decompose according to
irreducible representations of $G$. Given a $G_{2}$-structure, $2$-forms
split as $\Lambda ^{2}=\Lambda _{7}^{2}\oplus \Lambda _{14}^{2}$, where 
\begin{equation*}
\Lambda _{7}^{2}=\left\{ \alpha \lrcorner \varphi \text{: for a vector field 
}\alpha \right\}
\end{equation*}%
and 
\begin{equation*}
\Lambda _{14}^{2}=\left\{ \omega \in \Lambda ^{2}\text{: }\left( \omega
_{ab}\right) \in \mathfrak{g}_{2}\right\} =\left\{ \omega \in \Lambda ^{2}%
\text{: }\omega \lrcorner \varphi =0\right\} .
\end{equation*}%
The $3$-forms split as $\Lambda ^{3}=\Lambda _{1}^{3}\oplus \Lambda
_{7}^{3}\oplus \Lambda _{27}^{3}$, where the one-dimensional component
consists of forms proportional to $\varphi $, forms in the $7$-dimensional
component are defined by a vector field $\Lambda _{7}^{3}=\left\{ \alpha
\lrcorner \psi \text{: for a vector field }\alpha \right\} $, and forms in
the $27$-dimensional component are defined by traceless, symmetric matrices: 
\begin{equation}
\Lambda _{27}^{3}=\left\{ \chi \in \Lambda ^{3}:\chi _{abc}=\mathrm{i}%
_{\varphi }\left( h\right) =h_{[a}^{d}\varphi _{bc]d}^{{}}\text{ for }h_{ab}~%
\text{traceless, symmetric}\right\} .  \label{lam327}
\end{equation}%
By Hodge duality, similar decompositions exist for $\Lambda ^{4}$ and $%
\Lambda ^{5}$. In particular, we can define the $\Lambda _{27}^{4}$
component as 
\begin{equation}
\Lambda _{27}^{4}=\left\{ \chi \in \Lambda ^{4}:\chi _{abcd}=\ast \mathrm{i}%
_{\varphi }\left( h\right) =-\frac{4}{3}h_{[a}^{e}\psi _{\left\vert
e\right\vert bcd]}^{{}}\text{ for }h_{ab}~\text{traceless, symmetric}%
\right\} .  \label{lam427}
\end{equation}%
A detailed description of these representations is given in \cite%
{Bryant-1987,bryant-2003}. Also, formulae for projections of differential
forms onto the various components are derived in detail in \cite%
{GrigorianG2Torsion1, GrigorianYau1, karigiannis-2007}. Note that it is
sometimes convenient to consider the $1$ and $27$-dimensional components
together - then in (\ref{lam327}) and (\ref{lam427}) we simply drop the
condition for $h$ to be traceless. The only difference is that for an
arbitrary symmetric $h$, 
\begin{equation}
\left( \ast \mathrm{i}_{\varphi }\left( h\right) \right) _{abcd}=-\frac{4}{3}%
h_{[a}^{e}\psi _{\left\vert e\right\vert bcd]}^{{}}+\frac{1}{3}\left( \func{%
Tr}h\right) \psi _{abcd}
\end{equation}%
Also define the operators $\pi _{1}$, $\pi _{7}$, $\pi _{14}$ and $\pi _{27}$
to be the projections of differential forms onto the corresponding
representations. Sometimes we will also use $\pi _{1\oplus 27}$ to denote
the projection of $3$-forms or $4$-forms into $\Lambda _{1}^{3}\oplus
\Lambda _{27}^{3}$ or $\Lambda _{1}^{4}\oplus \Lambda _{27}^{4}$
respectively. For convenience, when writing out projections of forms, we
will sometimes just give the vector that defines the $7$-dimensional
component, the function that defines the $1$-dimensional component or the
symmetric $2$-tensor that defines the $1\oplus 27$ component whenever there
is no ambiguity. For instance,%
\begin{equation}
\begin{tabular}{lll}
$\pi _{1}\left( f\varphi \right) =f$ & $\pi _{1}\left( f\psi \right) =f$ & 
\\ 
$\pi _{7}\left( X\lrcorner \varphi \right) ^{a}=X^{a}$ & $\pi _{7}\left(
X\lrcorner \psi \right) ^{a}=X^{a}$ & $\pi _{7}\left( X\wedge \varphi
\right) ^{a}=X^{a}$ \\ 
$\pi _{1\oplus 27}\left( \mathrm{i}_{\varphi }\left( h\right) \right)
_{ab}=h_{ab}$ & $\pi _{1\oplus 27}\left( \ast \mathrm{i}_{\varphi }\left(
h\right) \right) _{ab}=h_{ab}$ & 
\end{tabular}
\label{projections}
\end{equation}

The \emph{intrinsic torsion }of a $G_{2}$-structure is defined by $\nabla
\varphi $, where $\nabla $ is the Levi-Civita connection for the metric $g$
that is defined by $\varphi $. Following \cite{karigiannis-2007}, it is easy
to see 
\begin{equation}
\nabla \varphi \in \Lambda _{7}^{1}\otimes \Lambda _{7}^{3}\cong W.
\label{torsphiW}
\end{equation}%
Here we define $W$ as the space $\Lambda _{7}^{1}\otimes \Lambda _{7}^{3}$.
Given (\ref{torsphiW}), we can write 
\begin{equation}
\nabla _{a}\varphi _{bcd}=T_{a}^{\ e}\psi _{ebcd}^{{}}  \label{codiffphi}
\end{equation}%
where $T_{ab}$ is the \emph{full torsion tensor}. Similarly, we can also
write 
\begin{equation}
\nabla _{a}\psi _{bcde}=-4T_{a[b}\varphi _{cde]}  \label{psitorsion}
\end{equation}%
We can also invert (\ref{codiffphi}) to get an explicit expression for $T$ 
\begin{equation}
T_{a}^{\ m}=\frac{1}{24}\left( \nabla _{a}\varphi _{bcd}\right) \psi ^{mbcd}.
\end{equation}%
This $2$-tensor fully defines $\nabla \varphi $ since pointwise, it has 49
components and the space $W$ is also 49-dimensional (pointwise). In general
we can split $T_{ab}$ according to representations of $G_{2}$ into \emph{%
torsion components}: 
\begin{equation}
T=\tau _{1}g+\tau _{7}\lrcorner \varphi +\tau _{14}+\tau _{27}
\end{equation}%
where $\tau _{1}$ is a function, and gives the $\mathbf{1}$ component of $T$%
. We also have $\tau _{7}$, which is a $1$-form and hence gives the $\mathbf{%
7}$ component, and, $\tau _{14}\in \Lambda _{14}^{2}$ gives the $\mathbf{14}$
component and $\tau _{27}$ is traceless symmetric, giving the $\mathbf{27}$
component. Hence we can split $W$ as 
\begin{equation}
W=W_{1}\oplus W_{7}\oplus W_{14}\oplus W_{27}.
\end{equation}%
As it was originally shown by Fern\'{a}ndez and Gray \cite{FernandezGray},
there are in fact a total of 16 torsion classes of $G_{2}$-structures that
arise as the $G_{2}$-invariant subspaces of $W$ to which $\nabla \varphi $
belongs. Moreover, as shown in \cite{karigiannis-2007}, the torsion
components $\tau _{i}$ relate directly to the expression for $d\varphi $ and 
$d\psi $. In fact, in our notation, 
%TCIMACRO{\TeXButton{TeX field}{\begin{subequations}}}%
%BeginExpansion
\begin{subequations}%
%EndExpansion
\label{dptors} 
\begin{eqnarray}
d\varphi &=&4\tau _{1}\psi -3\tau _{7}\wedge \varphi -3\ast \mathrm{i}%
_{\varphi }\left( \tau _{27}\right)  \label{dphi} \\
d\psi &=&-4\tau _{7}\wedge \psi -2\ast \tau _{14}.  \label{dpsi}
\end{eqnarray}%
%TCIMACRO{\TeXButton{TeX field}{\end{subequations}}}%
%BeginExpansion
\end{subequations}%
%EndExpansion
Note that in the literature (\cite{bryant-2003,CleytonIvanovConf}, for
example) a slightly different convention for torsion components is sometimes
used. Our $\tau _{1}$ then corresponds to $\frac{1}{4}\tau _{0}$, $\tau _{7}$
corresponds to $-\tau _{1}$ in their notation,\ $\mathrm{i}_{\varphi }\left(
\tau _{27}\right) $ corresponds to $-\frac{1}{3}\tau _{3}$ and $\tau _{14}$
corresponds to $\frac{1}{2}\tau _{2}$. Similarly, our torsion classes $%
W_{1}\oplus W_{7}\oplus W_{14}\oplus W_{27}$ correspond to $W_{0}\oplus
W_{1}\oplus W_{2}\oplus W_{3}$.

An important special case is when the $G_{2}$-structure is said to be
torsion-free, that is, $T=0$. This is equivalent to $\nabla \varphi =0$ and
also equivalent, by Fern\'{a}ndez and Gray, to $d\varphi =d\psi =0$.
Moreover, a $G_{2}$-structure is torsion-free if and only if the holonomy of
the corresponding metric is contained in $G_{2}$ \cite{Joycebook}. The
holonomy group is then precisely equal to $G_{2}$ if and only if the
fundamental group $\pi _{1}$ is finite.

If $d\varphi =0$, then we say $\varphi $ defines a \emph{closed }$G_{2}$%
-structure. Each of the torsion components in (\ref{dphi}) has to vanish
separately, so $\tau _{1}$, $\tau _{7}$ and $\tau _{27}$ are all zero, and
the only non-zero torsion component remaining is $\tau _{14}$. This class of 
$G_{2}$-structures has been important in the study of Laplacian flows of $%
\varphi $ \cite{bryant-2003,BryantXu,XuYe}. If instead, $d\psi =0$, then we
say that we have a \emph{co-closed }$G_{2}$-structure. In this case, $\tau
_{7}$ and $\tau _{14}$ vanish in (\ref{dpsi}) and we are left with $\tau
_{1} $ and $\tau _{27}$ components. In particular, the torsion tensor $%
T_{ab} $ is now symmetric. In this paper we will mostly be concerned with
co-closed $G_{2}$-structures.

We will also require a number of differential identities for differential
forms on manifolds with a co-closed $G_{2}$-structure. In many ways this is
the explicit version of Bryant's exterior derivative identities \cite%
{bryant-2003}, but these identities will be useful for us later on. The
projections are defined as in (\ref{projections}).

Throughout the paper we will be using the following notation. Given a $p$%
-form $\omega $, the rough Laplacian is defined by 
\begin{equation}
\nabla ^{2}\omega =g^{ab}\nabla _{a}\nabla _{b}\omega =-\nabla ^{\ast
}\nabla \omega .  \label{roughlap}
\end{equation}%
For a vector field $X$, define the \emph{divergence} of $X$ as 
\begin{equation}
\func{div}X=\nabla _{a}X^{a}  \label{divX}
\end{equation}%
This operator can be extended to a symmetric $2$-tensor $h$ 
\begin{equation}
\left( \func{div}h\right) _{b}=\nabla ^{a}h_{ab}  \label{divh}
\end{equation}%
Also, for a vector $X$, we can use the $G_{2}$-structure $3$-form $\varphi $
to define a \textquotedblleft curl\textquotedblright\ operator, similar to
the standard one on $\mathbb{R}^{3}$: 
\begin{equation}
\left( \func{curl}X\right) ^{a}=\left( \nabla _{b}X_{c}\right) \varphi ^{abc}
\label{curlX}
\end{equation}%
This curl operator can then also be extended to $2$-tensor. Given a $2$%
-tensor $\beta _{ab}$, 
\begin{equation}
\left( \func{curl}\beta \right) _{ab}=\left( \nabla _{m}^{{}}\beta
_{an}^{{}}\right) \varphi _{b}^{\ mn}  \label{curlw}
\end{equation}%
From the context it will be clear whether the curl operator is applied to a
vector or a $2$-tensor. Note that when $\beta _{ab}$ is symmetric, $\func{%
curl}w$ is traceless. As in \cite{GrigorianPhD}, we can also use the $G_{2}$%
-structure $3$-form to define a product $\alpha \circ \beta $ of two $2$%
-tensors $\alpha $ and $\beta $%
\begin{equation}
\left( \alpha \circ \beta \right) _{ab}=\varphi _{amn}\varphi _{bpq}\alpha
^{mp}\beta ^{nq}  \label{phphprod}
\end{equation}%
While the product in (\ref{phphprod}) can be defined for any $2$-tensors on
a manifold with a $G_{2}$-structure, for us it will be most useful when
restricted to symmetric tensors. It is then easy to see that (\ref{phphprod}%
) defines a commutative product on the space of symmetric $2$-tensors. It is
however non-associative, and so defines a non-trivial non-associative
algebra on symmetric $2$-tensors. For convenience, we will define a standard
inner product on symmetric $2$-tensors%
\begin{equation}
\left\langle \alpha ,\beta \right\rangle =\alpha _{ab}\beta _{mn}g^{am}g^{bn}
\label{symip}
\end{equation}

\begin{proposition}
\label{PropComponents}Suppose we have a co-closed $G_{2}$-structure on a
manifold $M$ with $3$-form $\varphi $ and dual $4$-form $\psi $. Let $\chi
\in \Lambda ^{3}$ be given by 
\begin{equation}
\chi =X\lrcorner \psi +3\mathrm{i}_{\varphi }\left( h\right)
\end{equation}%
then, the co-differential $d^{\ast }\chi $ is given by 
%TCIMACRO{\TeXButton{TeX field}{\begin{subequations}}}%
%BeginExpansion
\begin{subequations}%
%EndExpansion
\label{d3scomps}%
\begin{eqnarray}
\left( d^{\ast }\chi \right) _{bc} &=&-\left( \left( \func{div}h\right)
\lrcorner \varphi \right) _{bc}-2\left( \func{curl}h\right) _{\left[ bc%
\right] }+\nabla _{m}^{{}}X_{n}^{{}}\psi _{\ \ \ bc}^{mn}  \label{d3scompall}
\\
&&-\left( \func{Tr}T\right) X_{{}}^{a}\varphi
_{abc}^{{}}+T_{mn}^{{}}X_{{}}^{n}\varphi _{\ \
bc}^{m}+2X_{m}T_{n[b}^{{}}\varphi _{\ \ \ c]}^{mn}-T_{\ \ n}^{m}h^{np}\psi
_{mnbc}  \notag \\
\left( \pi _{7}d^{\ast }\chi \right) _{a} &=&\frac{2}{3}\left( \left( \func{%
curl}X\right) _{a}-\left( \func{div}h\right) _{a}-\frac{1}{2}\nabla _{a}%
\func{Tr}h\right.  \label{d3scomp7} \\
&&\left. -X_{a}\func{Tr}T+T_{ab}X^{b}-\varphi _{abc}^{{}}T_{\
d}^{b}h_{{}}^{dc}\right)  \notag
\end{eqnarray}%
%TCIMACRO{\TeXButton{TeX field}{\end{subequations}}}%
%BeginExpansion
\end{subequations}%
%EndExpansion
The type decomposition of the exterior derivative $d\chi $ is 
%TCIMACRO{\TeXButton{TeX field}{\begin{subequations}}}%
%BeginExpansion
\begin{subequations}%
%EndExpansion
\label{d3comps}%
\begin{eqnarray}
\pi _{1}d\chi &=&\frac{4}{7}\left( \func{div}X+\frac{1}{2}\func{Tr}T\func{Tr}%
h-\frac{1}{2}T_{ab}h^{ab}\right) \\
\left( \pi _{7}d\chi \right) _{a} &=&\frac{1}{2}\left( \nabla _{a}\func{Tr}%
h-\left( \func{div}h\right) _{a}-\left( \func{curl}X\right)
_{a}-2T_{ab}X^{b}\right) \\
\left( \pi _{1\oplus 27}d\chi \right) _{ab} &=&3\left( -\nabla
_{(a}X_{b)}+\left( \func{curl}h\right) _{\left( ab\right) }+\frac{1}{3}%
\left( \func{div}X\right) g_{ab}\right. \\
&&+\frac{1}{2}\left( T\circ h\right) _{ab}+T_{m(a}^{{}}h_{b)}^{\ \ m}-\frac{1%
}{2}\left( \func{Tr}h\right) T_{ab}  \notag \\
&&\left. -\frac{1}{2}\left( \func{Tr}T\right) h_{ab}-\frac{1}{6}g_{ab}\left( 
\func{Tr}T\right) \left( \func{Tr}h\right) +\frac{1}{6}\left\langle
h,T\right\rangle g_{ab}\right)  \notag
\end{eqnarray}%
%TCIMACRO{\TeXButton{TeX field}{\end{subequations}}}%
%BeginExpansion
\end{subequations}%
%EndExpansion
Also, up to the lower order terms involving the torsion, the $\Lambda
_{1}^{3}$ and $\Lambda _{7}^{3}$ components of $dd^{\ast }\chi $ are given by%
%TCIMACRO{\TeXButton{TeX field}{\begin{subequations}}}%
%BeginExpansion
\begin{subequations}%
%EndExpansion
\label{ddscomps} 
\begin{eqnarray}
\pi _{1}dd^{\ast }\chi &=&-\frac{2}{7}\left( \func{div}\left( \func{div}%
h\right) +\frac{1}{2}\nabla ^{2}\func{Tr}h\right) +l.o.t. \\
\left( \pi _{7}dd^{\ast }\chi \right) _{a} &=&\frac{1}{2}\left( \nabla
_{a}\left( \func{div}X\right) -\nabla ^{2}X_{a}-\func{curl}\left( \func{div}%
h\right) _{a}\right) +l.o.t.
\end{eqnarray}%
%TCIMACRO{\TeXButton{TeX field}{\end{subequations}}}%
%BeginExpansion
\end{subequations}%
%EndExpansion
Similarly, up to the lower order terms, the $\Lambda _{1}^{3}$ and $\Lambda
_{7}^{3}$ components of $d^{\ast }d\chi $ are given by%
%TCIMACRO{\TeXButton{TeX field}{\begin{subequations}}}%
%BeginExpansion
\begin{subequations}%
%EndExpansion
\label{dsdcomps}%
\begin{eqnarray}
\pi _{1}d^{\ast }d\chi &=&\frac{2}{7}\left( \func{div}\left( \func{div}%
h\right) -\nabla ^{2}\func{Tr}h\right) +l.o.t. \\
\left( \pi _{7}d^{\ast }d\chi \right) _{a} &=&\frac{1}{2}\left( \func{curl}%
\left( \func{div}h\right) _{a}-\nabla _{a}\left( \func{div}X\right) -\nabla
^{2}X_{a}\right) +l.o.t.
\end{eqnarray}%
%TCIMACRO{\TeXButton{TeX field}{\end{subequations}}}%
%BeginExpansion
\end{subequations}%
%EndExpansion
.
\end{proposition}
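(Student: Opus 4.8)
The plan is to prove everything by a single sustained computation in index notation, using the pointwise expression $\chi_{abc} = X^{e}\psi_{eabc} + h_{a}^{\ d}\varphi_{bcd} + h_{b}^{\ d}\varphi_{cad} + h_{c}^{\ d}\varphi_{abd}$ for $\chi = X\lrcorner\psi + 3\mathrm{i}_{\varphi}(h)$, together with the torsion identities (\ref{codiffphi})--(\ref{psitorsion}), the contraction identities of Proposition \ref{propcontractions}, and the fact that on a co-closed $G_{2}$-structure the full torsion $T$ is symmetric and $d\psi = 0$. Throughout I would write $d^{\ast}\chi$ as $-\nabla^{a}$ applied to the first slot of $\chi_{abc}$ and $d\chi$ as the antisymmetrized derivative $4\nabla_{[a}\chi_{bcd]}$, and project onto irreducible components by contracting with $\varphi$, with $\psi$, and by taking symmetric parts, exactly as recorded in (\ref{projections}) and (\ref{lam427}).

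First I would compute $d^{\ast}\chi$. Applying $-\nabla^{a}$ and using Leibniz, the derivative either falls on $X$ and $h$ --- producing the terms $\nabla_{m}X_{n}\psi^{mn}_{\ \ bc}$, $-((\func{div} h)\lrcorner\varphi)_{bc}$ and, after resolving the antisymmetrization in $\mathrm{i}_{\varphi}(h)$ with (\ref{phiphi1}), the term $-2(\func{curl} h)_{[bc]}$ --- or on $\varphi$ and $\psi$, where (\ref{codiffphi})--(\ref{psitorsion}) turn $\nabla\varphi$ and $\nabla\psi$ into torsion terms which, contracted against $\varphi$ and $\psi$ via Proposition \ref{propcontractions}, collapse to precisely the torsion terms in (\ref{d3scompall}); the $\pi_{7}$ formula (\ref{d3scomp7}) then follows by contracting (\ref{d3scompall}) with $\varphi_{a}^{\ bc}$ and simplifying once more. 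The computation of $d\chi$ is parallel: antisymmetrize $\nabla_{a}\chi_{bcd}$, move derivatives onto $X$ and $h$, and project onto $\Lambda_{1}^{4}$, $\Lambda_{7}^{4}$, $\Lambda_{1\oplus 27}^{4}$; here the identities (\ref{phiphi1})--(\ref{psipsi0}) are what convert the contracted products of $\varphi$'s and $\psi$'s into the operators $\func{div} X$, $\func{curl} h$, $T\circ h$, and the various $T h$ contractions appearing in (\ref{d3comps}).

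For the second-order statements (\ref{ddscomps}) and (\ref{dsdcomps}) I would track only terms carrying two derivatives of $(X,h)$; every term which is torsion times one derivative of $(X,h)$, or a derivative of torsion times $(X,h)$, or a curvature term produced by commuting two covariant derivatives through the Ricci identity, goes into $l.o.t.$ Concretely, feeding the principal part of $d^{\ast}\chi$, namely $-((\func{div} h)\lrcorner\varphi)_{bc} - 2(\func{curl} h)_{[bc]} + \nabla_{m}X_{n}\psi^{mn}_{\ \ bc}$, into $\pi_{1}d(\cdot)$ and $\pi_{7}d(\cdot)$ and simplifying with (\ref{phiphi1}) and the contraction $\varphi_{ebc}\varphi^{abc} = 6\delta_{e}^{a}$, one gets $\func{div}(\func{div} h)$, $\nabla^{2}\func{Tr} h$ and $\func{curl}(\func{div} h)$ with the stated coefficients; similarly $d^{\ast}d\chi$ comes from applying $-\nabla^{d}(\cdot)_{dabc}$ to the principal part of $d\chi$ from (\ref{d3comps}). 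The sign flip between (\ref{ddscomps}) and (\ref{dsdcomps}) in the $\nabla^{2}\func{Tr} h$ and $\func{curl}(\func{div} h)$ terms falls out of this and is exactly why the two combinations are listed separately --- it is what drives the parabolicity discussion later.

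The main obstacle is bookkeeping rather than conceptual difficulty: the expansions of $\psi_{abcd}\psi^{mnpq}$ in (\ref{psipsi0}) and of $\varphi_{abc}\psi_{mnp}^{\ \ \ \ \ \ c}$ in (\ref{phipsi}) generate many terms, and keeping the antisymmetrizations in $d$ consistent with the symmetrizations in the $\Lambda_{1\oplus 27}$ projection demands care to land on every numerical coefficient in (\ref{d3scomps})--(\ref{d3comps}) correctly. One also has to be disciplined about what is genuinely lower order: the passage to (\ref{ddscomps})--(\ref{dsdcomps}) introduces Riemann curvature contracted with $X$ or $h$, and differentiating the torsion terms in $d^{\ast}\chi$ produces $(\nabla T)\,h$ and $T\,\nabla h$; all of these are first order in $(X,h)$ and belong in $l.o.t.$, so the displayed expressions capture exactly the principal symbols, which is all that is needed downstream.
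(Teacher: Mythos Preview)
Your proposal is correct and follows essentially the same approach as the paper: the paper's proof simply states that the identities come from manipulating $G_{2}$ representation components via the contraction identities between $\varphi$ and $\psi$, and that for the second-order formulas one uses the fact that $\pi_{7}(Riem)$ and the Ricci tensor can be written entirely in terms of the torsion (so curvature contributions are lower order). Your write-up is in fact a more detailed and explicit version of exactly that computation.
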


\begin{proof}
These identities are found just by manipulating $G_{2}$ representation
components using contraction identities between $\varphi $ and $\psi $. For
the second order identities in order to isolate the highest order terms we
note that $\pi _{7}\left( Riem\right) $ and the Ricci tensor are expressed
solely in terms of the full torsion tensor \cite{karigiannis-2007}.
\end{proof}

\section{Deformations of $\protect\psi $}

\label{SecDeformPsi}\setcounter{equation}{0}Usually deformations of a $G_{2}$%
-structure are done via deformations of the $3$-form $\varphi $, and from
that deformations of associated quantities - $g,$ $\psi $ and the torsion
are calculated. In particular, infinitesimal deformations of all these
quantities have been written down in \cite{karigiannis-2007}, while the
general non-infinitesimal expression were derived in \cite%
{GrigorianG2Torsion1}. Since we will be considering a flow of $\psi $, we
need to re-derive all the infinitesimal results from \cite{karigiannis-2007}
using a deformation of $\psi $ as a starting point. Let $\left( \varphi
,g\right) $ be a $G_{2}$-structure. Using the metric, define $\psi =\ast
\varphi $. Suppose $\chi \in \Lambda ^{3}$, then $\ast \chi \in \Lambda ^{4}$%
. Consider a deformation of the $G_{2}$-structure via a deformation of $\psi 
$: 
\begin{equation}
\psi \longrightarrow \tilde{\psi}=\psi +\ast \chi  \label{psit0}
\end{equation}%
Assuming $\tilde{\psi}$ remains a positive $4$-form, it defines a new $G_{2}$%
-structure $\left( \tilde{\varphi},\tilde{g}\right) $. The $3$-form $\tilde{%
\varphi}$ is given by:%
\begin{eqnarray}
\tilde{\varphi}_{abc} &=&\tilde{\ast}\left( \psi +\ast \chi \right) _{abc} 
\notag \\
&=&\frac{1}{3!}\frac{1}{\sqrt{\det \tilde{g}}}\hat{\varepsilon}%
^{mnpqrst}\left( \psi _{qrst}+\ast \chi _{qrst}\right) \tilde{g}_{ma}\tilde{g%
}_{nb}\tilde{g}_{pc}  \notag \\
&=&\left( \frac{\det g}{\det \tilde{g}}\right) ^{\frac{1}{2}}\left( \varphi
^{mnp}+\chi ^{mnp}\right) \tilde{g}_{ma}\tilde{g}_{nb}\tilde{g}_{pc}
\label{phit0}
\end{eqnarray}%
In particular, 
\begin{equation}
\tilde{\varphi}^{\tilde{a}\tilde{b}\tilde{c}}=\left( \frac{\det g}{\det 
\tilde{g}}\right) ^{\frac{1}{2}}\left( \varphi ^{mnp}+\chi ^{mnp}\right)
\label{phit3}
\end{equation}%
where the raised indices with tildes are raised with the new inverse metric $%
\tilde{g}^{-1}$. The new metric can be found via the following identity. For
any $G_{2}$-structure $\left( \varphi ,g\right) $, from (\ref{contids}) we
find that 
\begin{equation*}
\psi _{amnp}\psi _{bqrs}\varphi ^{mnq}\varphi ^{prs}=16\varphi _{ap}^{\ \
q}\varphi _{bq}^{\ \ p}=-96g_{ab}
\end{equation*}%
Therefore for the $G_{2}$-structure $\left( \tilde{\varphi},\tilde{g}\right) 
$, we have 
\begin{eqnarray}
\tilde{g}_{ab} &=&-\frac{1}{96}\tilde{\psi}_{amnp}\tilde{\psi}_{bqrs}\tilde{%
\varphi}^{mnq}\tilde{\varphi}^{prs}  \notag \\
&=&-\frac{1}{96}\left( \frac{\det g}{\det \tilde{g}}\right) \left( \psi
_{amnp}+\ast \chi _{amnp}\right) \left( \psi _{bqrs}+\ast \chi
_{bqrs}\right) \left( \varphi ^{mnq}+\chi ^{mnq}\right) \left( \varphi
^{prs}+\chi ^{prs}\right)  \label{gtl}
\end{eqnarray}%
Without the determinant factor, this is a 4th order expression in $\chi $.
We can also obtain an expression for the inverse metric. Again, from (\ref%
{contids}), we have 
\begin{equation*}
\varphi ^{amn}\varphi ^{bpq}\psi _{mnpq}=4\varphi ^{amn}\varphi _{\ \
mn}^{b}=24g^{ab}
\end{equation*}%
Therefore for $\left( \tilde{\varphi},\tilde{g}\right) $, we have 
\begin{eqnarray}
\tilde{g}^{\tilde{a}\tilde{b}} &=&\frac{1}{24}\tilde{\varphi}^{\tilde{a}%
\tilde{m}\tilde{n}}\tilde{\varphi}^{\tilde{b}\tilde{p}\tilde{q}}\tilde{\psi}%
_{mnpq}  \notag \\
&=&\frac{1}{24}\left( \frac{\det g}{\det \tilde{g}}\right) \left( \varphi
^{amn}+\chi ^{amn}\right) \left( \varphi ^{bpq}+\chi ^{bpq}\right) \left(
\psi _{mnpq}+\ast \chi _{mnpq}\right)  \label{gtr}
\end{eqnarray}%
Suppose now $\chi $ is given by 
\begin{equation}
\chi =X\lrcorner \psi +3\mathrm{i}_{\varphi }\left( h\right)
\label{chidecomp}
\end{equation}%
where $v$ is vector and $h$ is a symmetric $2$-tensor. Also suppose that we
have a one-parameter family $\psi \left( t\right) $ given by 
\begin{eqnarray}
\frac{d}{dt}\psi &=&\ast \chi  \label{genpsiflow} \\
&=&-X\wedge \varphi +3\ast \mathrm{i}_{\varphi }\left( h\right)
\end{eqnarray}%
Then the evolution of $3$-form, metric, the inverse metric and the volume
form is given in the following Proposition.

\begin{proposition}
\label{propflow}Under the flow (\ref{genpsiflow}), the evolution of related
objects is given by:%
%TCIMACRO{\TeXButton{TeX field}{\begin{subequations}}}%
%BeginExpansion
\begin{subequations}%
%EndExpansion
\begin{eqnarray}
\frac{d}{dt}\sqrt{\det g} &=&\frac{3}{4}\left( \func{Tr}h\right) \sqrt{\det g%
}  \label{dtdetg} \\
\frac{d}{dt}g_{ab} &=&\frac{1}{2}\left( \func{Tr}h\right) g_{ab}-2h_{ab}
\label{dtg} \\
\frac{d}{dt}g^{ab} &=&-\frac{1}{2}\left( \func{Tr}h\right) g^{ab}+2h^{ab}
\label{dtginv} \\
\frac{d}{dt}\varphi &=&\frac{3}{4}\pi _{1}\chi +\pi _{7}\chi -\pi _{27}\chi
\label{dtphi} \\
&=&\frac{3}{4}\left( \func{Tr}h\right) \varphi +v\lrcorner \psi -3\mathrm{i}%
_{\varphi }\left( h\right)  \notag
\end{eqnarray}%
%TCIMACRO{\TeXButton{TeX field}{\end{subequations}}}%
%BeginExpansion
\end{subequations}%
%EndExpansion
\end{proposition}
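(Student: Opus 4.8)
The plan is to differentiate at $t=0$ the explicit algebraic formulas already obtained for the deformed structure: the expression (\ref{gtl}) for $\tilde g_{ab}$ (equivalently (\ref{gtr}) for $\tilde g^{ab}$) and the expression (\ref{phit0}) for $\tilde\varphi_{abc}$. Only two elementary facts are needed as input: along the family (\ref{genpsiflow}) one has $\tfrac{d}{dt}\psi_{abcd}\big|_{0}=(\ast\chi)_{abcd}$ exactly (no metric enters, since the indices are down), and $\tfrac{d}{dt}\bigl(\det g/\det\tilde g\bigr)\big|_{0}=-g^{cd}\dot g_{cd}$. Everything after that is contraction of $\varphi$'s and $\psi$'s, for which Proposition \ref{propcontractions} and its contractions suffice; throughout I use the decompositions $\chi=X\lrcorner\psi+3\mathrm{i}_\varphi(h)$ and $\ast\chi=-X\wedge\varphi+3\ast\mathrm{i}_\varphi(h)$ and write $h=h_0+\tfrac17(\func{Tr}h)g$ with $h_0$ traceless.

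First I would treat the metric, volume and inverse metric. Differentiating the last line of (\ref{gtl}) and using the identity $\psi_{amnp}\psi_{bqrs}\varphi^{mnq}\varphi^{prs}=-96\,g_{ab}$ recorded just before it, the variation of the determinant prefactor contributes $-(g^{cd}\dot g_{cd})g_{ab}$ and the variation of the two $\psi$-factors contributes a symmetric bilinear expression in $\ast\chi$ and $\psi$ (with the fixed, $g$-raised $\varphi$'s). The point is that this equation is self-referential only through the \emph{scalar} $g^{cd}\dot g_{cd}$: taking its $g^{ab}$-trace first determines that scalar as a multiple of $\func{Tr}h$, and substituting back recovers $\dot g_{ab}$ with no need to know $\dot\varphi$. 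One checks along the way that the $\Lambda^4_7$-part $-X\wedge\varphi$ of $\ast\chi$ contributes nothing to $\dot g$ — only $\Lambda^4_1\oplus\Lambda^4_{27}$-deformations of $\psi$ move the metric — while its $\Lambda^4_1\oplus\Lambda^4_{27}$-part produces exactly $\tfrac12(\func{Tr}h)g_{ab}-2h_{ab}$, giving (\ref{dtg}). Then (\ref{dtdetg}) and (\ref{dtginv}) are immediate from $\tfrac{d}{dt}\sqrt{\det g}=\tfrac12\sqrt{\det g}\,g^{cd}\dot g_{cd}$ with $g^{cd}\dot g_{cd}=\tfrac32\func{Tr}h$, and from $\dot g^{ab}=-g^{ac}g^{bd}\dot g_{cd}$.

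Next, with $\dot g$ in hand, I would differentiate the last line of (\ref{phit0}) for the $3$-form. The variation of the factor $(\varphi^{mnp}+\chi^{mnp})$ produces $\chi^{mnp}g_{ma}g_{nb}g_{pc}=\chi_{abc}$; the variation of the prefactor $(\det g/\det\tilde g)^{1/2}$ produces $-\tfrac12(g^{cd}\dot g_{cd})\varphi_{abc}=-\tfrac34(\func{Tr}h)\varphi_{abc}$; and the variation of the three lowering factors $\tilde g_{ma}\tilde g_{nb}\tilde g_{pc}$ produces $\varphi^{mnp}\bigl(\dot g_{ma}g_{nb}g_{pc}+g_{ma}\dot g_{nb}g_{pc}+g_{ma}g_{nb}\dot g_{pc}\bigr)$. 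Inserting $\dot g_{ab}=\tfrac12(\func{Tr}h)g_{ab}-2h_{ab}$, the $\func{Tr}h$-part of this last group contributes $+\tfrac32(\func{Tr}h)\varphi_{abc}$, while the $h$-part regroups — using $\mathrm{i}_\varphi(h)_{abc}=h_{[a}^{d}\varphi_{bc]d}$ (see (\ref{lam327})) and the total antisymmetry of $\varphi$ — into a multiple of $\mathrm{i}_\varphi(h)$; assembling the $\Lambda^3_1$-, $\Lambda^3_7$- and $\Lambda^3_{27}$-pieces produces exactly $\tfrac34\pi_1\chi+\pi_7\chi-\pi_{27}\chi$, which is (\ref{dtphi}). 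In particular the coefficient $-2$ in (\ref{dtg}) is what reverses the sign of the $\pi_{27}$-component and rescales the $\pi_1$-component relative to $\chi$.

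The hard part will be purely computational: the bookkeeping of the $\varphi$--$\psi$ contractions in the metric variation, in particular verifying that precisely the $\Lambda^4_1\oplus\Lambda^4_{27}$-part of $\ast\chi$ feeds into $\dot g$ and pinning down the rational coefficients $\tfrac12$ and $-2$ in (\ref{dtg}). The only structural subtlety — the self-referential appearance of $g^{cd}\dot g_{cd}$ through $\det\tilde g$ — is dispatched by first taking a trace, so no coupled tensor system for $(\dot g,\dot\varphi)$ ever arises.
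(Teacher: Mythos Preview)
Your plan is essentially the paper's own proof: differentiate the algebraic formulas (\ref{gtl}) and (\ref{phit0}) at $t=0$, resolve the self-reference through $\det\tilde g$ by first taking a trace, then feed the resulting $\dot g$ into the $\varphi$-computation. One point to tighten: in (\ref{gtl}) the factors $(\varphi^{mnq}+\chi^{mnq})(\varphi^{prs}+\chi^{prs})$ also vary, so in addition to the two $\ast\chi$-from-$\psi$ terms there are two $\chi$-from-$\varphi$ terms (cf.\ the paper's (\ref{ddtgt}), which has four such linear terms, not two); your parenthetical ``with the fixed, $g$-raised $\varphi$'s'' is not quite right, and if you omit those contributions you will not land on the coefficients $\tfrac12$ and $-2$ in (\ref{dtg}).
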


\begin{proof}
From (\ref{gtl}) we find that under the flow (\ref{genpsiflow}), $g\left(
t\right) $ is given by 
\begin{eqnarray}
g_{ab}\left( t\right) &=&-\frac{1}{96}\left( \frac{\det g}{\det \left(
g\left( t\right) \right) }\right) \left( \psi _{amnp}+t\ast \chi
_{amnp}\right) \left( \psi _{bqrs}+t\ast \chi _{bqrs}\right)  \label{gt1} \\
&&\times \left( \varphi ^{mnq}+t\chi ^{mnq}\right) \left( \varphi
^{prs}+t\chi ^{prs}\right) +O\left( t^{2}\right)  \notag
\end{eqnarray}%
where $g\left( 0\right) =g$. Now let us find$\left. \frac{d}{dt}\right\vert
_{t=0}g\left( t\right) $:%
\begin{eqnarray}
\left. \frac{d}{dt}\right\vert _{t=0}g\left( t\right) _{ab} &=&\left( \left. 
\frac{d}{dt}\right\vert _{t=0}\left( \det g\left( t\right) \right)
^{-1}\right) \left( -\frac{\det g}{96}\right) \psi _{amnp}\psi
_{bqrs}\varphi ^{mnq}\varphi ^{prs}  \notag \\
&&-\frac{1}{96}\ast \chi _{amnp}\psi _{bqrs}\varphi ^{mnq}\varphi ^{prs}-%
\frac{1}{96}\ast \chi _{bqrs}\psi _{amnp}\varphi ^{mnq}\varphi ^{prs}  \notag
\\
&&-\frac{1}{96}\psi _{amnp}\psi _{bqrs}\chi ^{mnq}\varphi ^{prs}-\frac{1}{96}%
\psi _{amnp}\psi _{bqrs}\varphi ^{mnq}\chi ^{prs}  \notag \\
&=&-g_{ab}\left( \det g\right) ^{-1}\left( \left. \frac{d}{dt}\right\vert
_{t=0}\det g\left( t\right) \right) -\frac{1}{24}\chi _{amnp}^{{}}\varphi
_{{}}^{mnq}\varphi _{bq}^{\ \ p}  \label{ddtgt} \\
&&-\frac{1}{24}\ast \chi _{bqrs}^{{}}\varphi _{ap}^{\ \ q}\varphi
_{{}}^{prs}-\frac{1}{24}\psi _{amnp}^{{}}\varphi _{bq}^{\ \ p}\chi ^{mnq} 
\notag \\
&&-\frac{1}{24}\varphi _{ap}^{\ \ q}\psi _{bqrs}\chi ^{prs}  \notag
\end{eqnarray}%
where we have used a contracted version of the identity (\ref{phipsi}). Now
if we substitute (\ref{chidecomp}) into (\ref{ddtgt}), and simplify further
using the identities (\ref{contids}), we will find that%
%TCIMACRO{\TeXButton{TeX field}{\begin{subequations}}}%
%BeginExpansion
\begin{subequations}%
%EndExpansion
\label{delg} 
\begin{equation}
\left. \frac{d}{dt}\right\vert _{t=0}g\left( t\right) _{ab}=-\left( \det
g\right) ^{-1}\left( \left. \frac{d}{dt}\right\vert _{t=0}\det g\left(
t\right) \right) g_{ab}+2\left( \func{Tr}h\right) g_{ab}-2h_{ab}
\end{equation}%
and similarly, for the inverse metric, 
\begin{equation}
\left. \frac{d}{dt}\right\vert _{t=0}g\left( t\right) ^{ab}=-\left( \det
g\right) ^{-1}\left( \left. \frac{d}{dt}\right\vert _{t=0}\det g\left(
t\right) \right) g^{ab}+\left( \func{Tr}h\right) g^{ab}+2h^{ab}
\end{equation}%
%TCIMACRO{\TeXButton{TeX field}{\end{subequations}}}%
%BeginExpansion
\end{subequations}%
%EndExpansion
. Now however, using the fact that the derivative of a determinant gives the
trace, we get that 
\begin{eqnarray*}
\left( \left. \frac{d}{dt}\right\vert _{t=0}\det g\left( t\right) \right)
&=&\det g\left( g^{ab}\left. \frac{d}{dt}\right\vert _{t=0}g\left( t\right)
_{ab}\right) \\
&=&-7\left( \left. \frac{d}{dt}\right\vert _{t=0}\det g\left( t\right)
\right) +12\det g\func{Tr}h
\end{eqnarray*}%
Hence, 
\begin{equation}
\left( \left. \frac{d}{dt}\right\vert _{t=0}\det g\left( t\right) \right) =%
\frac{3}{2}\det g\func{Tr}h  \label{delgam}
\end{equation}%
Therefore, substituting this into (\ref{delg}) we obtain (\ref{dtg}) and (%
\ref{dtginv}). Also, from (\ref{delgam}) we immediately obtain the
expression for $\frac{d}{dt}\sqrt{\det g}$ (\ref{dtdetg}). Now to compute $%
\frac{d}{dt}\varphi $, we first find from (\ref{phit0}) that under the flow (%
\ref{genpsiflow})%
\begin{equation*}
\varphi \left( t\right) _{abc}=\left( \frac{\det g}{\det g\left( t\right) }%
\right) ^{\frac{1}{2}}\left( \varphi ^{mnp}+t\chi ^{mnp}\right) g\left(
t\right) _{ma}g\left( t\right) _{nb}g\left( t\right) _{pc}+O\left(
t^{2}\right)
\end{equation*}%
Hence, 
\begin{eqnarray}
\left. \frac{d}{dt}\right\vert _{t=0}\varphi \left( t\right) _{abc} &=&-%
\frac{1}{2}\left( \det g\right) ^{-1}\left( \left. \frac{d}{dt}\right\vert
_{t=0}\det g\left( t\right) \right) \varphi _{abc}  \notag \\
&&+\chi _{abc}+3\left. \frac{d}{dt}\right\vert _{t=0}g\left( t\right)
_{m[a}^{{}}\varphi _{\ bc]}^{m}  \notag \\
&=&\frac{3}{4}\left( \func{Tr}h\right) \varphi _{abc}+\chi _{abc}+3\mathrm{i}%
_{\varphi }\left( \frac{1}{2}\left( \func{Tr}h\right) g-2h\right) _{abc}
\end{eqnarray}%
This can be rewritten as 
\begin{eqnarray*}
\left. \frac{d}{dt}\right\vert _{t=0}\varphi \left( t\right) &=&\frac{3}{4}%
\left( \func{Tr}h\right) \varphi +X\lrcorner \psi -3\mathrm{i}_{\varphi
}\left( h\right) \\
&=&\frac{3}{4}\pi _{1}\chi +\pi _{7}\chi -\pi _{27}\chi
\end{eqnarray*}%
and thus we get (\ref{dtphi}).
\end{proof}

Now that we know how $\varphi $ evolves, we can easily work out the
evolution of the torsion tensor $T_{ab}$.

\begin{proposition}
\label{PropDtdtgen}The evolution of the torsion tensor $T_{ab}$ under the
flow (\ref{genpsiflow}) is given by%
\begin{equation}
\frac{dT_{ab}}{dt}=\frac{1}{4}\left( \func{Tr}h\right) T_{ab}-T_{a}^{\
c}h_{cb}^{{}}-T_{a}^{\ c}X_{{}}^{d}\varphi _{dcb}^{{}}+\left( \func{curl}%
h\right) _{ab}+\nabla _{a}X_{b}-\frac{1}{4}\left( \nabla _{c}^{{}}\func{Tr}%
h\right) \varphi _{\ ab}^{c}  \label{dttorspsi}
\end{equation}
\end{proposition}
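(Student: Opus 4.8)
The plan is to differentiate in time the algebraic relation that defines the full torsion tensor. By (\ref{codiffphi}) we have $\nabla_{a}\varphi_{bcd}=T_{a}^{\ e}\psi_{ebcd}$; contracting with $\psi^{fbcd}$ and using (\ref{psipsi0}) in the contracted form $\psi_{ebcd}\psi^{fbcd}=24\delta_{e}^{f}$ gives $24\,T_{a}^{\ f}=\left(\nabla_{a}\varphi_{bcd}\right)\psi^{fbcd}$. Differentiating along the flow (\ref{genpsiflow}) then yields $24\,\frac{d}{dt}T_{a}^{\ f}=\frac{d}{dt}(\nabla_{a}\varphi_{bcd})\,\psi^{fbcd}+(\nabla_{a}\varphi_{bcd})\,\frac{d}{dt}\psi^{fbcd}$, and the three ingredients I would feed in are: (i) $\frac{d}{dt}\varphi=\frac{3}{4}(\func{Tr}h)\varphi+X\lrcorner\psi-3\mathrm{i}_{\varphi}(h)$ from Proposition \ref{propflow}; (ii) $\frac{d}{dt}\psi=\ast\chi=-X\wedge\varphi+3\ast\mathrm{i}_{\varphi}(h)$ together with $\frac{d}{dt}g^{ab}=-\frac{1}{2}(\func{Tr}h)g^{ab}+2h^{ab}$, which control $\frac{d}{dt}\psi^{fbcd}$; and (iii) the standard variation of the Levi-Civita connection $\frac{d}{dt}\Gamma_{ab}^{e}=\frac{1}{2}g^{ef}(\nabla_{a}\dot g_{bf}+\nabla_{b}\dot g_{af}-\nabla_{f}\dot g_{ab})$ with $\dot g_{ab}=\frac{1}{2}(\func{Tr}h)g_{ab}-2h_{ab}$, which enters through $\frac{d}{dt}(\nabla_{a}\varphi_{bcd})=\nabla_{a}\dot\varphi_{bcd}-\dot\Gamma_{ab}^{e}\varphi_{ecd}-\dot\Gamma_{ac}^{e}\varphi_{bed}-\dot\Gamma_{ad}^{e}\varphi_{bce}$.

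First I would expand $\nabla_{a}\dot\varphi_{bcd}$ by Leibniz, using (\ref{codiffphi}) and (\ref{psitorsion}) each time the derivative lands on $\varphi$ or $\psi$; the $\nabla_{a}X$ piece of $\nabla_{a}(X\lrcorner\psi)$ survives contraction with $\psi^{fbcd}$ to give the $\nabla_{a}X_{b}$ term, while the remaining pieces are torsion-linear. Next I would substitute the connection variation: since $\dot g$ is a combination of $(\func{Tr}h)g$ and $h$, the terms $\dot\Gamma^{e}_{a\bullet}\varphi_{\bullet\bullet\bullet}$ contracted with $\psi^{fbcd}$ produce, via (\ref{phipsi}), the curl term $(\func{curl}h)_{ab}$ (in the form (\ref{curlw})) and the $\nabla_{c}(\func{Tr}h)\varphi^{c}_{\ ab}$ term, together with more torsion-linear contributions once (\ref{phipsi}) and (\ref{phiphi1}) are applied. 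Finally, writing $\frac{d}{dt}\psi^{fbcd}$ out through $\dot g^{\bullet\bullet}$ and $\dot\psi_{\bullet\bullet\bullet\bullet}$ and contracting against $\nabla_{a}\varphi_{bcd}=T_{a}^{\ e}\psi_{ebcd}$, the identities of Proposition \ref{propcontractions} collapse this to further multiples of $(\func{Tr}h)T_{ab}$, $T_{a}^{\ c}h_{cb}$ and $T_{a}^{\ c}X^{d}\varphi_{dcb}$. Collecting the three contributions, solving the resulting linear equation for $\frac{d}{dt}T_{a}^{\ f}$, and lowering the index --- noting that $\frac{d}{dt}T_{a}^{\ f}=\dot g^{fc}T_{ac}+g^{fc}\frac{d}{dt}T_{ac}$, so the piece $\dot g^{fc}T_{ac}=-\frac{1}{2}(\func{Tr}h)T_{a}^{\ f}+2h^{fc}T_{ac}$ must be carried to the other side --- produces (\ref{dttorspsi}). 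Alternatively one could differentiate $\nabla_{a}\psi_{bcde}=-4T_{a[b}\varphi_{cde]}$ and contract with $\varphi^{bcd}$, which keeps all torsion indices down at the cost of handling a fourfold antisymmetrization.

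The main obstacle is the combinatorial bookkeeping of the torsion-linear terms. These arise from three independent sources --- the covariant derivatives of $\varphi$ and $\psi$ hidden inside $\nabla_{a}\dot\varphi$, the connection-variation terms $\dot\Gamma\cdot\varphi$, and the metric and $\psi$ variations inside $\frac{d}{dt}\psi^{fbcd}$ --- and each must be reduced through the three-term identity (\ref{psipsi0}) and its contractions while tracking the antisymmetrizations coming from $\mathrm{i}_{\varphi}(h)$ and $\ast\mathrm{i}_{\varphi}(h)$. Pinning down the coefficient $\frac{1}{4}$ on $(\func{Tr}h)T_{ab}$ and the exact signs on the $T_{a}^{\ c}h_{cb}$ and $T_{a}^{\ c}X^{d}\varphi_{dcb}$ terms is where care is needed; by contrast, the ``top order'' terms $\nabla_{a}X_{b}$, $(\func{curl}h)_{ab}$ and $\nabla_{c}(\func{Tr}h)\varphi^{c}_{\ ab}$ fall out fairly directly once the relevant $\varphi$--$\psi$ contractions have been carried out.
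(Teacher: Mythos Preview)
Your approach is correct in principle but takes a much longer route than the paper. The paper does not differentiate $24\,T_{a}^{\ f}=(\nabla_{a}\varphi_{bcd})\psi^{fbcd}$ directly; instead it simply quotes the already-known formula from \cite{karigiannis-2007} for the evolution of $T_{ab}$ under an arbitrary flow of the \emph{three}-form, namely
\[
\frac{dT_{ab}}{dt}=T_{a}^{\ c}s_{cb}-T_{a}^{\ c}X^{d}\varphi_{dcb}-(\func{curl}\,s)_{ab}+\nabla_{a}X_{b}
\]
when $\dot\varphi=X\lrcorner\psi+3\,\mathrm{i}_{\varphi}(s)$. Since Proposition~\ref{propflow} already tells you that the $\psi$-flow (\ref{genpsiflow}) induces $\dot\varphi$ with $s=\tfrac{1}{4}(\func{Tr}h)g-h$ (same $X$), the proof reduces to a one-line substitution: plug this $s$ into the cited formula and observe that $(\func{curl}\,s)_{ab}=-(\func{curl}\,h)_{ab}+\tfrac{1}{4}(\nabla_{c}\func{Tr}h)\varphi^{c}_{\ ab}$ and $T_{a}^{\ c}s_{cb}=\tfrac{1}{4}(\func{Tr}h)T_{ab}-T_{a}^{\ c}h_{cb}$.

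What you are proposing is essentially to rederive the Karigiannis formula from scratch. That is legitimate and self-contained, and your outline of the three contributions (time derivative of $\varphi$, variation of the connection, variation of $\psi^{fbcd}$) together with the index-lowering correction is the right scaffolding. The cost is exactly the combinatorial bookkeeping you flag at the end; the benefit is that you do not need to invoke an external reference or adjust for sign conventions. The paper trades that computation for a citation and a trivial substitution.
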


\begin{proof}
From \cite{karigiannis-2007} we infer that under a general evolution of $%
\varphi $ given by 
\begin{equation*}
\frac{d}{dt}\varphi =X\lrcorner \psi +3\mathrm{i}_{\varphi }\left( s\right)
\end{equation*}%
for some symmetric $2$-tensor $s$, the evolution of the torsion tensor $%
T_{ab}$ is given by 
\begin{eqnarray}
\frac{dT_{ab}}{dt} &=&T_{a}^{\ \ c}s_{cb}^{{}}-T_{a}^{\ \ c}X^{d}\varphi
_{dcb}-\left( \nabla _{c}^{{}}s_{da}^{{}}\right) \varphi _{\ \
b}^{cd}+\nabla _{a}X_{b}  \notag \\
&=&T_{a}^{\ \ c}s_{cb}^{{}}-T_{a}^{\ \ c}X^{d}\varphi _{dcb}-\left( \func{%
curl}s\right) _{ab}+\nabla _{a}X_{b}  \label{dttorsphi}
\end{eqnarray}%
Note that compared with \cite{karigiannis-2007}, some of the signs are
different due to a different sign convention for $\psi $, which also leads
to a different sign for $T$ and $X$. From (\ref{dtphi}) we have that in our
case, 
\begin{equation*}
s=\frac{1}{4}\left( \func{Tr}h\right) g-h
\end{equation*}%
Therefore, substituting this into (\ref{dttorsphi}) we get (\ref{dttorspsi}).
\end{proof}

A key motivation for studying flows of closed $G_{2}$-structures within a
fixed cohomology class of the $3$-form $\varphi $ was that a critical point
of the volume functional (\ref{volfunc}) within the cohomology class $\left[
\varphi \right] $ corresponds a torsion-free $G_{2}$-structure. It is
trivial to adapt Hitchin's proof of this fact \cite{Hitchin:2000jd-arxiv} to
co-closed $G_{2}$-structures.

\begin{proposition}
\label{propvolextr}Let $M$ be a compact $7$-manifold, and suppose the $4$%
-form $\psi $ defines a co-closed $G_{2}$-structure on $M$, so that $d\psi
=0 $. Let $\varphi =\ast \psi $ be the corresponding $3$-form. Define the
volume functional $V$ 
\begin{equation}
V\,\left( \psi \right) =\frac{1}{7}\int_{M}\varphi \wedge \psi .
\label{volfuncpsi}
\end{equation}%
Then $\psi $ defines a torsion-free $G_{2}$-structure if and only if it is a
critical point of the functional $V$ restricted to the cohomology class $%
[\psi ]\in H^{4}\left( M,\mathbb{R}\right) $.
\end{proposition}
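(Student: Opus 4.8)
The plan is to adapt Hitchin's variational argument, leaning on the first‑variation formulas already recorded in Proposition~\ref{propflow}. First I would observe that $V$ is just the total Riemannian volume: since $\varphi\wedge\psi=\varphi\wedge\ast\varphi=\lvert\varphi\rvert^{2}\,\mathrm{vol}=7\,\mathrm{vol}$, we have $V(\psi)=\int_{M}\mathrm{vol}_{g_{\psi}}$. A variation of $\psi$ staying in the cohomology class $[\psi]$ (and remaining a positive $4$-form, automatic for small $t$) has the form $\psi_{t}=\psi+t\,d\eta$ with $\eta\in\Omega^{3}(M)$ arbitrary; conversely every exact $4$-form arises this way, so the space of admissible infinitesimal variations at $\psi$ is exactly $d\Omega^{3}(M)$. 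Writing $d\eta=\ast\chi$ with $\chi\in\Omega^{3}(M)$ and decomposing $\chi=X\lrcorner\psi+3\,\mathrm{i}_{\varphi}(h)$ as in~(\ref{chidecomp}), equation~(\ref{dtdetg}) gives $\left.\tfrac{d}{dt}\right|_{t=0}\mathrm{vol}_{g_{\psi_{t}}}=\tfrac{3}{4}(\func{Tr}h)\,\mathrm{vol}$, hence
\[
\left.\frac{d}{dt}\right|_{t=0}V(\psi_{t})=\frac{3}{4}\int_{M}(\func{Tr}h)\,\mathrm{vol}.
\]

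Next I would rewrite the right-hand side as a cohomological pairing. Pointwise $\dot\psi\wedge\varphi=(\ast\chi)\wedge\varphi=\varphi\wedge\ast\chi=\langle\varphi,\chi\rangle\,\mathrm{vol}$, and because $\Lambda^{3}_{7}$ and $\Lambda^{3}_{27}$ are orthogonal to $\Lambda^{3}_{1}$, only the $\pi_{1}$-part of $\chi$ contributes; that part is $\tfrac{3}{7}(\func{Tr}h)\,\varphi$ (using the conventions in~(\ref{projections}) together with $\mathrm{i}_{\varphi}(g)=\varphi$), so $\langle\varphi,\chi\rangle=3\func{Tr}h$ since $\lvert\varphi\rvert^{2}=7$. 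Thus $(\func{Tr}h)\,\mathrm{vol}=\tfrac{1}{3}\,d\eta\wedge\varphi$, and Stokes' theorem on the closed manifold $M$ yields
\[
\left.\frac{d}{dt}\right|_{t=0}V(\psi_{t})=\frac{1}{4}\int_{M}d\eta\wedge\varphi=\frac{1}{4}\int_{M}\eta\wedge d\varphi.
\]

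Then the conclusion is immediate. The $4$-form $\psi$ is a critical point of $V$ restricted to $[\psi]$ precisely when $\int_{M}\eta\wedge d\varphi=0$ for every $\eta\in\Omega^{3}(M)$. Since the wedge pairing $\Omega^{3}(M)\times\Omega^{4}(M)\to\mathbb{R}$ is nondegenerate — concretely, the choice $\eta=\ast\,d\varphi$ gives $\int_{M}\ast d\varphi\wedge d\varphi=\int_{M}\lvert d\varphi\rvert^{2}\,\mathrm{vol}$, using $\ast\ast=\mathrm{id}$ in these degrees on a $7$-manifold — this vanishing is equivalent to $d\varphi=0$. Combined with the standing hypothesis $d\psi=0$, the Fern\'{a}ndez--Gray theorem recalled in Section~\ref{secg2struct} identifies $d\varphi=d\psi=0$ with $T=0$, that is, with $\psi$ defining a torsion-free $G_{2}$-structure, completing both implications.

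I expect the only genuinely delicate part to be the bookkeeping in the middle step: keeping the Hodge-star and wedge-product sign conventions consistent and correctly tracking which $G_{2}$-irreducible component of $\chi$ feeds $\func{Tr}h$ and with what numerical factor, so that the identity $\left.\tfrac{d}{dt}\right|_{t=0}V=\tfrac14\int_{M}\eta\wedge d\varphi$ comes out with the right constant. Once that identity is in hand, the rest — reducing to nondegeneracy of the wedge pairing and invoking Fern\'{a}ndez--Gray — is purely formal.
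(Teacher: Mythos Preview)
Your argument is correct and follows essentially the same route as the paper: compute the first variation of $V$ along $\dot\psi=d\eta$, reduce it to $\tfrac{1}{4}\int_M \varphi\wedge d\eta=\tfrac{1}{4}\int_M \eta\wedge d\varphi$, and conclude by nondegeneracy of the wedge pairing together with $d\psi=0$ and Fern\'andez--Gray. The only cosmetic difference is that you first rewrite $V=\int_M\mathrm{vol}$ and invoke~(\ref{dtdetg}), then work back to the wedge pairing, whereas the paper varies $\tfrac{1}{7}\int_M\varphi\wedge\psi$ directly using~(\ref{dtphi}); both computations draw on Proposition~\ref{propflow} and arrive at the identical first-variation identity.
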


\begin{proof}
Consider the variation of $V$:%
\begin{eqnarray}
\delta V\left( \dot{\psi}\right) &=&\frac{1}{7}\int_{M}\dot{\varphi}\wedge
\psi +\frac{1}{7}\int_{M}\varphi \wedge \dot{\psi}  \notag \\
&=&\frac{1}{7}\int_{M}\left( \frac{3}{4}\pi _{1}\ast \dot{\psi}+\pi _{7}\ast 
\dot{\psi}-\pi _{27}\ast \dot{\psi}\right) \wedge \psi  \notag \\
&&+\frac{1}{7}\int_{M}\varphi \wedge \dot{\psi}  \notag \\
&=&\frac{1}{4}\int_{M}\varphi \wedge \dot{\psi}
\end{eqnarray}%
where we have used (\ref{dtphi}). Now suppose $\dot{\psi}=d\eta $ for some $%
3 $-form $\eta $, so that we vary in the same cohomology class. Now%
\begin{equation}
\delta V\left( \dot{\psi}\right) =\frac{1}{4}\int_{M}\varphi \wedge d\eta =%
\frac{1}{4}\int d\varphi \wedge \eta  \label{delV}
\end{equation}%
Thus $\delta V=0$ for all $\eta $ if and only if 
\begin{equation*}
d\varphi =0\text{.}
\end{equation*}%
Since we already have $d\psi =0$, this is satisfied if and only if the $%
G_{2} $-structure is torsion-free.
\end{proof}

In the arXiv version of \cite{Hitchin:2000jd}, Hitchin has shown that
critical points of the functional $V\left( \varphi \right) $ on $3$-forms
are non-degenerate in the directions transverse to the action of the
diffeomorphism group $Diff\left( M\right) $. Here we adapt the proof from 
\cite{Hitchin:2000jd} to show an analogous result for the functional $%
V\left( \psi \right) $ on $4$-forms.

\begin{proposition}
Suppose the $4$-form $\psi $ defines a torsion-free $G_{2}$-structure on a
compact $7$-manifold $M$. Then the Hessian of the functional $V$ (\ref%
{volfuncpsi}) at $\psi $ is non-degenerate transverse to the action of $%
Diff\left( M\right) $.
\end{proposition}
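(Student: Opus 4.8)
The plan is to compute the Hessian of $V$ on the tangent space to the cohomology class $[\psi ]$ — the space of exact $4$-forms $\dot{\psi }=d\eta $ — and to identify its null space with the tangent space $\{\mathcal{L}_{X}\psi :X\in \mathfrak{X}(M)\}$ to the $Diff_{0}(M)$-orbit through $\psi $. First I would differentiate the first-variation formula $\delta V(\dot{\psi })=\frac{1}{4}\int_{M}\varphi \wedge \dot{\psi }$ from Proposition \ref{propvolextr}. Since $[\psi ]$ is an affine space I may use the constant extension $\dot{\psi }=d\eta $ of a tangent vector, so only $\varphi $ varies and $\mathrm{Hess}_{\psi }(\dot{\psi },\dot{\psi })=\frac{1}{4}\int_{M}\dot{\varphi }\wedge \dot{\psi }$. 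Writing $\chi =\ast \dot{\psi }$ and invoking Proposition \ref{propflow}, in particular $\dot{\varphi }=\frac{3}{4}\pi _{1}\chi +\pi _{7}\chi -\pi _{27}\chi $ from (\ref{dtphi}), together with $\dot{\varphi }\wedge \dot{\psi }=\dot{\varphi }\wedge \ast \chi =\langle \dot{\varphi },\chi \rangle \,\mathrm{vol}$ and the orthogonality of $\Lambda ^{3}=\Lambda _{1}^{3}\oplus \Lambda _{7}^{3}\oplus \Lambda _{27}^{3}$, I obtain
\[
\mathrm{Hess}_{\psi }(\dot{\psi },\dot{\psi })=\frac{1}{4}\int_{M}\Big( \tfrac{3}{4}|\pi _{1}\chi |^{2}+|\pi _{7}\chi |^{2}-|\pi _{27}\chi |^{2}\Big) \,\mathrm{vol},
\]
with polarization $B(\dot{\psi }_{1},\dot{\psi }_{2})=\frac{1}{4}\langle \Theta \chi _{1},\chi _{2}\rangle _{L^{2}}$, where $\Theta =\frac{3}{4}\pi _{1}+\pi _{7}-\pi _{27}$ is self-adjoint and $\Theta \chi =\dot{\varphi }$.

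Next I would observe that the $Diff_{0}(M)$-orbit through $\psi $ lies in $[\psi ]$, that $V$ is constant along it (diffeomorphisms preserve $\int_{M}\varphi \wedge \psi $), and, crucially, that every point $\phi ^{\ast }\psi $ of the orbit is again torsion-free, hence co-closed, hence by Proposition \ref{propvolextr} again a critical point of $V$ on its cohomology class. Thus $dV$ vanishes along the entire orbit, so the bilinear form $B$ annihilates the orbit's tangent space $\{\mathcal{L}_{X}\psi \}$ against all of $T_{\psi }[\psi ]$; hence $B$ descends to the transverse space, and "non-degenerate transverse to $Diff$" is exactly the assertion $\ker B=\{\mathcal{L}_{X}\psi :X\in \mathfrak{X}(M)\}$. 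To compute $\ker B$, note that since $\dot{\psi }$ is exact, $\chi =\ast \dot{\psi }$ is co-exact, and as co-exact $3$-forms are $L^{2}$-orthogonal precisely to closed $3$-forms, $B(\dot{\psi },\cdot )\equiv 0$ if and only if $\dot{\varphi }=\Theta \chi $ is closed. Therefore $\ker B$ is exactly the space of infinitesimal torsion-free deformations — pairs $(\dot{\varphi },\dot{\psi })$ with $d\dot{\varphi }=0$, $d\dot{\psi }=0$, linked by $\dot{\psi }=\ast \Theta ^{-1}\dot{\varphi }$ — whose $4$-form part $\dot{\psi }$ is exact.

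It then remains to show that every such deformation equals $\mathcal{L}_{X}\psi $. I would invoke the deformation theory of torsion-free $G_{2}$-structures \cite{Joycebook}: there is a vector field $X$ for which $\dot{\varphi }-\mathcal{L}_{X}\varphi $ is harmonic. By naturality of the assignment $\varphi \mapsto \psi =\ast _{\varphi }\varphi $ — equivariance under $Diff(M)$ differentiated at the identity shows that the linearization sends $\mathcal{L}_{X}\varphi $ to $\mathcal{L}_{X}\psi $ — the corresponding variation of the $4$-form is $\dot{\psi }-\mathcal{L}_{X}\psi =\ast \Theta ^{-1}(\dot{\varphi }-\mathcal{L}_{X}\varphi )$, and on a holonomy-$G_{2}$ manifold $\ast $ and the $G_{2}$-type projections $\pi _{i}$ (hence $\Theta ^{-1}$) preserve harmonicity, so $\dot{\psi }-\mathcal{L}_{X}\psi $ is harmonic; being also exact, it vanishes. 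Hence $\dot{\psi }=\mathcal{L}_{X}\psi $, which gives $\ker B=\{\mathcal{L}_{X}\psi \}$ and proves the claim.

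I expect this final step to be the main obstacle: it genuinely exploits the integrability of the torsion-free structure — the full parallelism $\nabla \varphi =0$, and the resulting refined Hodge theory on holonomy-$G_{2}$ manifolds (the Laplacian commuting with the $G_{2}$-type decomposition, and the smoothness of the moduli space) — to rule out elements of $\ker B$ beyond the Lie-derivative directions. By contrast, the first two paragraphs are formal: the Hessian formula is immediate from Propositions \ref{propvolextr} and \ref{propflow} and the contraction identities of Proposition \ref{propcontractions}, and the reduction of $\ker B$ to closed $\dot{\varphi }$ is elementary Hodge theory.
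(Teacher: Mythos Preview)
Your argument is correct but follows a genuinely different route from the paper's. You compute the Hessian explicitly, identify its radical on $T_{\psi}[\psi]$ with the space of infinitesimal torsion-free deformations having exact $\dot\psi$, and then invoke Joyce's moduli theorem to conclude that any such deformation is a Lie derivative. The paper (adapting Hitchin's original argument for $3$-forms) instead works on a slice: it assumes $d\chi$ is $L^{2}$-orthogonal to the orbit, which gives $\pi_{7}d^{\ast}d\chi=0$; using the Green's operator it replaces $\chi$ by a co-closed form in $\Lambda^{3}_{1}\oplus\Lambda^{3}_{27}$ with the same $d\chi$; then, with the identities of Proposition~\ref{PropComponents}, it rewrites the degeneracy condition $d(D(d\chi))=0$ as $d^{\ast}d\chi-2d^{\ast}\pi_{7}d\chi=0$, applies $d$, and uses that the Hodge Laplacian commutes with the $G_{2}$-type projections (torsion-free case) to obtain $\Delta d\chi=0$, hence $d\chi=0$.

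The trade-off is clear. Your approach is conceptually transparent---the Hessian formula and the reduction of $\ker B$ to $d\dot\varphi=0$ are indeed formal---but the final step imports Joyce's smoothness-of-moduli theorem, whose infinitesimal content (torsion-free deformations with trivial period are Lie derivatives) is very close to the statement you are proving; so while not logically circular, your proof does not stand independently of that machinery. The paper's argument is more computational and relies on the explicit first- and second-order identities of Proposition~\ref{PropComponents}, but it is entirely self-contained within the paper's framework and gives an independent verification in the spirit of Hitchin. One small point worth making explicit in your write-up: the statement ``there is $X$ with $\dot\varphi-\mathcal{L}_{X}\varphi$ harmonic'' is not quite the raw statement of Joyce's theorem; you obtain it by first observing (as you do) that the harmonic representative $H(\dot\varphi)$ is itself an infinitesimal torsion-free deformation---because $\ast$ and the $\pi_{i}$, hence $\Theta^{-1}$, commute with $\Delta$---and then applying the injectivity of the period map to $\dot\varphi-H(\dot\varphi)$.
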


\begin{proof}
Since $\psi $ defines a torsion-free $G_{2}$-structure, it is a critical
point of the functional $V$. Let us consider infinitesimal deformations of $%
\psi $ by an exact form $d\chi $, for $\chi \in \Lambda ^{3}$. Then, from (%
\ref{dtphi}), the deformation of the dual $3$-form $\varphi $ is given by%
\begin{equation}
D\left( d\chi \right) =\frac{3}{4}\ast \pi _{1}d\chi +\ast \pi _{7}d\chi
-\ast \pi _{27}d\chi .  \label{Ddchi}
\end{equation}

Now, the tangent space of the orbit of $Diff\left( M\right) $ consists of
the forms $\mathcal{L}_{X}\psi $ where $X$ is some vector field. Suppose $%
d\chi =\mathcal{L}_{X}\psi =d\left( X\lrcorner \psi \right) $ for some
vector field $X$. Then by diffeomorphism invariance, we get that $D\left(
d\chi \right) =\mathcal{L}_{X}\varphi =d\left( X\lrcorner \varphi \right) $
and hence $\varphi $ remains closed along the orbits of $Diff\left( M\right) 
$. So now suppose that $\psi $ is a critical point but which is degenerate
in some direction $d\chi $ which is transverse to orbits of $Diff\left(
M\right) $, that is, $\psi $ remains torsion-free in the direction of $d\chi 
$. Hence for proof by contradiction, we have to show now that if $d\chi $
orthogonal to $d\left( X\lrcorner \varphi \right) $ for any vector field $X$
and $d\left( D\left( d\chi \right) \right) =0$, then $d\chi =0$.

If $d\chi $ is orthogonal to orbits of $Diff\left( M\right) $ then for any
vector field $X$, 
\begin{eqnarray}
0 &=&\int_{M}\left\langle d\chi ,\mathcal{L}_{X}\psi \right\rangle \mathrm{%
vol}=\int_{M}\left\langle d\chi ,d\left( X\lrcorner \psi \right)
\right\rangle \mathrm{vol}  \notag \\
&=&\int_{M}\left\langle d^{\ast }d\chi ,X\lrcorner \psi \right\rangle 
\mathrm{vol}  \label{pi7dsd0}
\end{eqnarray}%
where $\left\langle \cdot ,\cdot \right\rangle $ is the standard inner
product with respect to the metric $g$ defined by the $G_{2}$-structure on $%
M $. Since (\ref{pi7dsd0}) is true for any $X$, this means that 
\begin{equation}
\pi _{7}d^{\ast }d\chi =0.  \label{pi7dsd1}
\end{equation}

Now from the Hodge Theorem, we can write $\chi $ as 
\begin{equation}
\chi =H\left( \chi \right) +dGd^{\ast }\chi +d^{\ast }Gd\chi
\end{equation}%
where $H\left( \chi \right) $ gives the harmonic part and $G$ is the Green's
operator for the Hodge Laplacian. An important property of $G$ is that it
commutes with $d$, $d^{\ast }$ and the projections onto representation
components. Then we have 
\begin{equation*}
d\chi =dd^{\ast }Gd\chi =dGd^{\ast }d\chi
\end{equation*}%
However, from (\ref{pi7dsd1}), 
\begin{equation*}
\pi _{7}Gd^{\ast }d\chi =0.
\end{equation*}%
So, we can say that $d\chi =d\eta $ for $\eta =Gd^{\ast }d\chi \in \Lambda
_{1}^{3}\oplus \Lambda _{27}^{3},$ and moreover~%
\begin{equation}
d^{\ast }\eta =0
\end{equation}%
hence we can assume that $\chi \in \Lambda _{1}^{3}\oplus \Lambda _{27}^{3}$
and $d^{\ast }\chi =0$. In particular, $\pi _{7}d^{\ast }\chi =0$. Then if
we suppose $\chi $ is given by $\chi =3\mathrm{i}_{\varphi }\left( h\right) $%
, from Proposition \ref{PropComponents} we get that 
\begin{equation*}
\left( \func{div}h\right) _{a}=-\frac{1}{2}\nabla _{a}\func{Tr}h
\end{equation*}%
and therefore, from (\ref{d3comps}), 
\begin{eqnarray}
\pi _{7}d\chi &=&\frac{3}{4}d\func{Tr}\left( h\right) \wedge \varphi  \notag
\\
&=&d\left( \frac{3}{4}\left( \func{Tr}h\right) \varphi \right)
\end{eqnarray}%
Moreover, since $\chi \in \Lambda _{1}^{3}\oplus \Lambda _{27}^{3}$, from (%
\ref{d3comps}) we have that 
\begin{equation*}
\pi _{1}d\chi =0.
\end{equation*}%
Thus from the condition $d\left( D\left( d\chi \right) \right) =0$ we have 
\begin{equation}
d^{\ast }d\chi -2d^{\ast }\pi _{7}d\chi =0  \label{dsdchipi7}
\end{equation}%
Now note that 
\begin{eqnarray*}
\Delta \pi _{7}d\chi &=&dd^{\ast }\pi _{7}d\chi +d^{\ast }d\pi _{7}d\chi \\
&=&dd^{\ast }\pi _{7}d\chi
\end{eqnarray*}%
since $\pi _{7}d\chi $ is exact. Hence by applying the exterior derivative
to (\ref{dsdchipi7}), we find that 
\begin{equation*}
\Delta d\chi -2\Delta \pi _{7}d\chi =0
\end{equation*}%
However the Hodge Laplacian of a torsion-free $G_{2}$-structure commutes
with the component projections and we recover%
\begin{equation*}
\pi _{27}\Delta d\chi -\pi _{7}\Delta d\chi =0.
\end{equation*}%
Each of the components must vanish individually, and so 
\begin{equation*}
\Delta d\chi =0
\end{equation*}%
and thus $d\chi =0$ as required.
\end{proof}

\section{Laplacian of $\protect\psi $}

\label{SecLapPsi}\setcounter{equation}{0}Let us now look at the properties
of $\Delta \psi $. For now consider a generic $G_{2}$-structure, so that 
\begin{equation*}
\Delta \psi =dd^{\ast }\psi +d^{\ast }d\psi
\end{equation*}%
Note that since $\psi =\ast \varphi $, we have 
\begin{equation*}
\Delta \psi =\ast \Delta \varphi
\end{equation*}%
so in particular, for the type decomposition of $\Delta \psi $ it is enough
to look at $\Delta \varphi $.

\begin{proposition}
Suppose $\varphi $ defines a $G_{2}$-structure. Then $\Delta \varphi $ has
the following type decomposition.%
%TCIMACRO{\TeXButton{TeX field}{\begin{subequations}}}%
%BeginExpansion
\begin{subequations}%
%EndExpansion
\label{lapdecom}%
\begin{eqnarray}
\pi _{1}\Delta \varphi  &=&\frac{2}{7}\left( -2\func{Tr}\left( \func{curl}%
T\right) +2\func{Tr}\left( T^{t}T\right) +\left( \func{Tr}T\right) ^{2}-%
\func{Tr}\left( T^{2}\right) -T_{ab}T_{cd}\psi ^{abcd}\right) \varphi 
\label{pi1lapphi} \\
\pi _{7}\Delta \varphi  &=&\left( -\func{div}T+T_{{}}^{ab}T_{\ a}^{c}\varphi
_{\ bc}^{e}+\left( \func{Tr}T\right) T_{ab}\varphi ^{abe}\right) \lrcorner
\psi   \label{pi7lapphi} \\
\pi _{27}\Delta \varphi  &=&\mathrm{i}_{\varphi }\left( -\frac{3}{7}\func{Tr}%
\left( \func{curl}T\right) g_{de}-3\left( \func{curl}T^{t}\right) _{\left(
de\right) }-3\psi _{\ \ (d}^{abc}T_{|ab}^{{}}T_{c|e)}^{{}}\right. 
\label{pi27lapphi} \\
&&+\frac{3}{14}\left( \left( \func{Tr}T\right) ^{2}-\func{Tr}\left(
T^{2}\right) -T_{ab}T_{cd}\psi ^{abcd}+2\func{Tr}\left( T^{t}T\right) +7\psi
_{mnpq}T^{mn}T^{pq}\right) g_{de}  \notag \\
&&\left. -3\left( T\circ T\right) _{\left( de\right) }-3\psi _{\ \
(d}^{abc}T_{\left\vert ab\right\vert }^{{}}T_{e)c}^{{}}-3T_{\
d}^{a}T_{ae}^{{}}\right)   \notag
\end{eqnarray}%
%TCIMACRO{\TeXButton{TeX field}{\end{subequations}}}%
%BeginExpansion
\end{subequations}%
%EndExpansion
\end{proposition}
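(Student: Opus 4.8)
The plan is to compute $\Delta\varphi$ by the Weitzenb\"ock formula and then project onto the three irreducible $G_{2}$-types; the statement for $\Delta\psi$ follows at once because $\psi=\ast\varphi$ and $\ast$ commutes with $\Delta$, so $\Delta\psi=\ast\Delta\varphi$ and it is enough to find the $\Lambda_{1}^{3}\oplus\Lambda_{7}^{3}\oplus\Lambda_{27}^{3}$ decomposition of $\Delta\varphi$. Writing the Weitzenb\"ock identity for a $3$-form as $\Delta\varphi=\nabla^{\ast}\nabla\varphi+\mathcal{R}\varphi$, where $\mathcal{R}\varphi$ is a fixed contraction of $\varphi$ with the Ricci and Riemann tensors of $g_{\varphi}$, the computation splits into a ``rough Laplacian'' piece and a ``curvature'' piece.

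For the rough Laplacian I would differentiate $\nabla_{a}\varphi_{bcd}=T_{a}^{\ e}\psi_{ebcd}$ once more and contract, using $\nabla_{a}\psi_{bcde}=-4T_{a[b}\varphi_{cde]}$ whenever a covariant derivative lands on $\psi$. This gives $\nabla^{\ast}\nabla\varphi=-(\func{div}T)\lrcorner\psi+(\text{terms quadratic in }T)$, so the only first-order contribution from this piece sits in $\Lambda_{7}^{3}$; the quadratic terms are then reduced to the standard invariants built from $T$, $\varphi$, $\psi$ --- among them $\func{Tr}(T^{t}T)$, $\func{Tr}(T^{2})$, $(\func{Tr}T)^{2}$, $T_{ab}T_{cd}\psi^{abcd}$, $(T\circ T)_{(de)}$, $T^{a}_{\ d}T_{ae}$ and $\psi^{abc}_{\ \ \ (d}T_{|ab|}T_{c|e)}$ --- by repeated use of the contraction identities (\ref{contids}).

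For the curvature piece the key input, exactly as in the proof of Proposition \ref{PropComponents}, is that for a $G_{2}$-structure the Ricci tensor and $\pi_{7}(\mathrm{Riem})$ are given by explicit formulas in $T$ and $\nabla T$ (see \cite{karigiannis-2007}); moreover, since for each fixed index $c$ the $2$-form $\varphi_{abc}$ lies in $\Lambda_{7}^{2}$, the Riemann contraction appearing in $\mathcal{R}\varphi$ only involves the projection of the curvature operator onto $\Lambda_{7}^{2}$, which by the first Bianchi identity is again expressed through the Ricci tensor and $\pi_{7}(\mathrm{Riem})$. Substituting these formulas turns $\mathcal{R}\varphi$ into the first-order operators $\func{div}T$, $\func{curl}T$, $\func{curl}T^{t}$ --- this is precisely why no other derivatives of $T$ appear in (\ref{lapdecom}) --- together with further quadratic-in-$T$ terms. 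Adding the two pieces and applying $\pi_{1}$, $\pi_{7}$, $\pi_{27}$ via (\ref{projections}), (\ref{lam327}), (\ref{lam427}) --- reading off the coefficient of $\varphi$, of $(\cdots)\lrcorner\psi$, and of $\mathrm{i}_{\varphi}(\cdots)$ respectively --- yields (\ref{pi1lapphi})--(\ref{pi27lapphi}). (A more pedestrian alternative avoids Weitzenb\"ock: write $d^{\ast}\varphi=-\ast d\psi$ and $d\varphi$ from (\ref{dptors}) in terms of the $\tau_{i}$, apply $d$ and $d^{\ast}$ once more using general-torsion analogues of the exterior-derivative identities of Proposition \ref{PropComponents}, and collect; this is essentially equivalent but needs more building blocks.)

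Representation theory guarantees that $\Lambda_{1}^{3}$, $\Lambda_{7}^{3}$ and $\Lambda_{27}^{3}$ each contribute a single term, so the real difficulty --- and the main obstacle --- is pinning down all the numerical coefficients exactly rather than merely up to lower-order terms: there are many inequivalent ways to contract a product $T_{ab}T_{cd}$ against $\varphi$'s and $\psi$'s, and the curvature-to-torsion substitution must be carried through in the $\Lambda_{7}$ slot precisely enough that its first-order remainder recombines with the $-\func{div}T$ from $\nabla^{\ast}\nabla\varphi$. Useful consistency checks are that $\langle\varphi,\Delta\varphi\rangle=|\nabla\varphi|^{2}+\langle\mathcal{R}\varphi,\varphi\rangle$ (using that $|\varphi|^{2}$ is constant) reproduces the coefficient in $\pi_{1}\Delta\varphi$, that the $\func{div}T$ and $\func{curl}T^{t}$ terms agree with the known Ricci formula, and that the entire expression vanishes when $T=0$.
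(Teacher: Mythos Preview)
Your Weitzenb\"ock route is correct and would produce (\ref{lapdecom}), but it differs from the paper's argument, which is more direct. The paper never introduces curvature: it writes $(\Delta\varphi)_{abc}=-3\nabla_{[a}\nabla^{d}\varphi_{|d|bc]}-4\nabla^{d}\nabla_{[d}\varphi_{abc]}$ straight from the definition of $dd^{\ast}+d^{\ast}d$, substitutes $\nabla_{a}\varphi_{bcd}=T_{a}^{\ e}\psi_{ebcd}$ inside each outer derivative, expands the remaining $\nabla\psi$ via (\ref{psitorsion}), and projects. Everything is expressed in $T$ and $\nabla T$ from the outset, so there is nothing to ``undo''. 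Your approach instead splits $\Delta\varphi=\nabla^{\ast}\nabla\varphi+\mathcal{R}\varphi$ and must then convert the curvature piece back into torsion; as you correctly observe, the only Riemann contraction that appears is $R_{ab}^{\ \ de}\varphi_{dec}$, which is precisely $\pi_{7}(\mathrm{Riem})$ in the sense of \cite{karigiannis-2007} (equivalently, the identity obtained by applying $[\nabla_{a},\nabla_{b}]$ to $\varphi$ and comparing with (\ref{codiffphi}) and (\ref{psitorsion})), and this together with the Ricci formula does suffice. The trade-off: the paper's route is shorter and avoids the detour through curvature identities entirely, while yours makes the origin of the first-order terms more transparent --- the $-\func{div}T$ in $\pi_{7}$ comes from $\nabla^{\ast}\nabla\varphi$ alone, whereas the $\func{curl}T$ and $\func{curl}T^{t}$ terms in $\pi_{1}$ and $\pi_{27}$ enter only through $\mathcal{R}\varphi$ --- and lends itself naturally to the consistency checks you list. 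Your parenthetical ``pedestrian alternative'' via (\ref{dptors}) is a third valid route, closer in spirit to Bryant's exterior-derivative calculus in \cite{bryant-2003}.
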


\begin{proof}
This is a straightforward, but rather long calculation. We first expand $%
\Delta \varphi $ in terms of the covariant derivative%
\begin{eqnarray*}
\left( \Delta \varphi \right) _{abc} &=&\left( dd^{\ast }\varphi +d^{\ast
}d\varphi \right) _{abc} \\
&=&-3\nabla _{\lbrack a}\nabla ^{d}\varphi _{\left\vert d\right\vert
bc]}-4\nabla ^{d}\nabla _{\lbrack d}\varphi _{abc]}
\end{eqnarray*}%
and then apply the formula for the covariant derivative of $\varphi $ in
terms of $T_{ab}$ (\ref{codiffphi}). This gives 
\begin{equation*}
\left( \Delta \varphi \right) _{abc}=-3\nabla _{\lbrack a}\left( T^{de}\psi
_{\left\vert ed\right\vert bc]}\right) -4\nabla ^{d}\left( T_{[d}^{\ e}\psi
_{\left\vert e\right\vert abc]}\right) .
\end{equation*}%
Now, expanding further, and applying (\ref{psitorsion}), we get a full
expression for $\Delta \varphi $ in terms of $T$ and its derivatives. This
can then be projected onto the components of $\Lambda ^{3}$ to obtain the
decomposition (\ref{lapdecom}).
\end{proof}

\begin{definition}
Given a differential operator $P$, denote by $D_{\varphi }P\left( \chi
\right) $ its linearization at $\varphi $, evaluated at $\chi $:%
\begin{equation*}
D_{\varphi }P\left( \chi \right) =\lim_{t\longrightarrow 0}\left( \frac{%
P\left( \varphi +t\chi \right) -P\left( \varphi \right) }{t}\right)
\end{equation*}
\end{definition}

\begin{proposition}
\label{propgenop}Consider a non-linear differential operator $P_{\varphi }$,
which is defined by $\varphi $, acting on the $G_{2}$-structure $\varphi $,
given to leading order by 
\begin{eqnarray}
\pi _{7}\left( P_{\varphi }\varphi \right) _{a} &=&-\left( \func{div}%
T\right) _{a}+l.o.t  \label{pi7genop} \\
\pi _{1\oplus 27}\left( P_{\varphi }\varphi \right) _{ab} &=&a_{27}\left( 
\func{curl}T^{t}\right) _{\left( ab\right) }+b_{27}\left( \func{curl}%
T\right) _{\left( ab\right) }+a_{1}\func{Tr}\left( \func{curl}T\right)
g_{ab}+l.o.t.  \label{pi127genop}
\end{eqnarray}%
for some constants $a_{27}$, $b_{27}$ and $a_{1}$. Then the linearization of 
$P_{\varphi }$ at $\varphi $ is given by 
\begin{eqnarray}
\pi _{7}\left( D_{\varphi }P_{\varphi }\right) \left( \chi \right) _{b} &=&%
\func{curl}\left( \func{div}h\right) _{b}-\nabla ^{2}X_{b}+l.o.t
\label{pi7genoplin} \\
\pi _{1\oplus 27}\left( D_{\varphi }P_{\varphi }\right) \left( \chi \right)
_{de} &=&a_{27}\left( -\nabla _{a}\nabla _{m}h_{nb}\varphi _{\ \
(d}^{mn}\varphi _{\ \ e)}^{ab}\right)  \label{pi127genoplin} \\
&&+b_{27}\left( \nabla ^{2}h_{de}-\nabla _{(d}^{{}}\left( \func{div}h\right)
_{\ e)}+\left( \nabla _{a}^{{}}\nabla _{(d}^{{}}X_{\left\vert b\right\vert
}^{{}}\right) \varphi _{\ \ e)}^{ab}\right)  \notag \\
&&+a_{1}\left( \nabla ^{2}\left( \func{Tr}h\right) -\func{div}\left( \func{%
div}h\right) \right) g_{de}+l.o.t.  \notag
\end{eqnarray}
\end{proposition}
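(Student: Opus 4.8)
The plan is to exploit that, by hypothesis, $P_{\varphi}\varphi$ is a \emph{second order} differential operator in $\varphi$: by \ref{codiffphi} the full torsion $T$ is first order in $\varphi$, and the leading part of $P_{\varphi}\varphi$ is assembled from $\func{div}T$, $\func{curl}T$, $\func{curl}T^{t}$ and $\func{Tr}\func{curl}T$, i.e.\ from one further derivative of $T$. Hence $D_{\varphi}P_{\varphi}$ is a second order operator in $\chi$, and since \ref{pi7genoplin}--\ref{pi127genoplin} record only its principal part it is enough to track terms carrying two derivatives of $\chi$; every term with an undifferentiated factor of $\chi$, of $T$, or of the curvature is swept into $l.o.t.$ In particular the linearizations of the type projections themselves contribute only $l.o.t.$ (they are algebraic in $\chi$ and act on the second order quantity $P_{\varphi}\varphi$), so $\pi_{7}(D_{\varphi}P_{\varphi})(\chi)=-D(\func{div}T)+l.o.t.$ and $\pi_{1\oplus27}(D_{\varphi}P_{\varphi})(\chi)_{de}=a_{27}D(\func{curl}T^{t})_{(de)}+b_{27}D(\func{curl}T)_{(de)}+a_{1}D(\func{Tr}\func{curl}T)\,g_{de}+l.o.t.$ Throughout I would parametrise the variation as $\chi=X\lrcorner\psi+3\mathrm{i}_{\varphi}(h)$, as in \ref{chidecomp}.

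The key input I would use is the principal part of the linearized torsion. Applying \ref{dttorsphi} with $s=h$ (the correct substitution when $\varphi$ itself is varied by $\chi$) gives
\[
D_{\varphi}T_{ab}=\nabla_{a}X_{b}-(\func{curl}h)_{ab}+l.o.t.,
\]
the dropped terms $T_{a}^{\ c}h_{cb}$ and $T_{a}^{\ c}X^{d}\varphi_{dcb}$ being algebraic in $(X,h)$. Next I would observe that linearizing the operators $\func{div}$, $\func{curl}$, $\func{curl}(\cdot)^{t}$ and $\func{Tr}\func{curl}$ themselves only differentiates the metric, the connection and $\varphi$, each costing at most one derivative of $\chi$ while contracted against $T$ (already first order in $\varphi$); hence to principal order $D(\func{div}T)=\func{div}(D_{\varphi}T)+l.o.t.$, $D(\func{curl}T)=\func{curl}(D_{\varphi}T)+l.o.t.$, and likewise for the other two. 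So the whole computation collapses to inserting $\nabla X-\func{curl}h$ into $\func{div}$, $\func{curl}$, $\func{curl}(\cdot)^{t}$ and $\func{Tr}\func{curl}$, then simplifying with the contraction identities \ref{contids} and commuting covariant derivatives freely (each commutator is curvature, hence $l.o.t.$).

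Carrying this out: for the $\Lambda_{7}^{3}$ component one gets $\pi_{7}(D_{\varphi}P_{\varphi})(\chi)_{b}=-\nabla^{2}X_{b}+\nabla^{a}(\func{curl}h)_{ab}+l.o.t.$, and pulling $\nabla^{a}$ past the $\varphi$ inside $\func{curl}h$ and commuting the two derivatives turns $\nabla^{a}(\func{curl}h)_{ab}$ into $\func{curl}(\func{div}h)_{b}+l.o.t.$, which is \ref{pi7genoplin}. For the $\Lambda_{1}^{3}\oplus\Lambda_{27}^{3}$ component I would evaluate $D(\func{curl}T^{t})_{(de)}$, $D(\func{curl}T)_{(de)}$ and $D(\func{Tr}\func{curl}T)$ by substituting $D_{\varphi}T=\nabla X-\func{curl}h$ and using \ref{contids} to collapse each resulting $\varphi\varphi$ pair into $g\!\cdot\!g+\psi$. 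The $g\!\cdot\!g$ pieces produce exactly $\nabla^{2}h_{de}$, $\nabla_{(d}(\func{div}h)_{e)}$, $(\nabla_{a}\nabla_{(d}X_{|b|})\varphi_{\ \ e)}^{ab}$ and $\nabla^{2}\func{Tr}h-\func{div}(\func{div}h)$, while the $\psi$ pieces are contracted against a symmetrizable pair of covariant derivatives and so vanish up to curvature; assembling with the coefficients $a_{27},b_{27},a_{1}$ yields \ref{pi127genoplin}.

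The step I expect to be the real obstacle is honestly bookkeeping which apparently second order terms survive, since several of them cancel. The $X$-contribution to $D(\func{curl}T^{t})_{ab}$ at top order is $(\nabla_{m}\nabla_{n}X_{a})\varphi_{\ \ b}^{mn}$, and the $X$-contribution to $D(\func{Tr}\func{curl}T)$ is $(\nabla_{m}\nabla_{a}X_{n})\varphi^{amn}$; in each of these the two derivative indices are both contracted with a totally antisymmetric form, so the symmetric part dies and only curvature times $X$, i.e.\ $l.o.t.$, is left, and the same mechanism kills the $\psi$-valued remnants generated by \ref{contids}. Thus the $X$-terms disappear from the $a_{27}$ and $a_{1}$ parts (but not from the $b_{27}$ part, where the derivative indices of $X$ are not both contracted with $\varphi$), and $D(\func{curl}T^{t})_{(de)}$ collapses to the single surviving term $-\nabla_{a}\nabla_{m}h_{nb}\,\varphi_{\ \ (d}^{mn}\varphi_{\ \ e)}^{ab}+l.o.t.$ of \ref{pi127genoplin}. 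Once these cancellations are secured, matching coefficients is routine index manipulation with \ref{contids}.
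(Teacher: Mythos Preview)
Your proof is correct and follows essentially the same route as the paper: both compute the principal part of $D_{\varphi}T$ from (\ref{dttorsphi}), substitute into the leading-order expressions for $P_{\varphi}\varphi$, and simplify using the contraction identity (\ref{phiphi1}) together with the Ricci identity to commute derivatives. Your discussion of why the $X$-contributions to the $a_{27}$ and $a_{1}$ pieces and the residual $\psi$-terms drop to lower order is slightly more explicit than the paper, which simply passes from the unsimplified to the simplified display without comment.
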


\begin{proof}
Suppose 
\begin{equation*}
\dot{\varphi}=\chi =X^{a}\psi _{amnp}+3h_{[m}^{a}\varphi _{np]a}^{{}}
\end{equation*}%
then from \cite{karigiannis-2007} we know that 
\begin{eqnarray*}
\dot{g}_{ab} &=&2h_{ab} \\
\dot{g}^{ab} &=&-2h^{ab}
\end{eqnarray*}%
and similarly as in (\ref{dttorsphi}), 
\begin{equation*}
\dot{T}_{ab}=T_{a}^{\ c}h_{cb}^{{}}-T_{a}^{\ c}X_{{}}^{d}\varphi
_{dcb}^{{}}-\left( \func{curl}h\right) _{ab}+\nabla _{a}X_{b}.
\end{equation*}%
Then consider the linearization of $P$ at $\varphi $%
\begin{equation}
\pi _{7}\left( DP_{\varphi }\right) \left( \chi \right) =\func{div}\left( 
\func{curl}h\right) _{b}-\nabla ^{2}X_{b}+l.o.t.  \label{pi7genopa}
\end{equation}%
Note however, that 
\begin{eqnarray*}
\func{div}\left( \func{curl}h\right) _{b} &=&\nabla ^{a}\left( \left( \nabla
_{m}^{{}}h_{an}^{{}}\right) \varphi _{b}^{\ mn}\right) \\
&=&\left( \nabla ^{a}\nabla _{m}^{{}}h_{an}^{{}}\right) \varphi _{b}^{\
mn}+l.o.t. \\
&=&\nabla _{m}\left( \left( \func{div}h\right) _{n}\right) \varphi _{b}^{\
mn}-\left( R_{\ \ am}^{ca\ }h_{cn}^{{}}+R_{\ nam}^{c}h_{\ n}^{a}\right)
\varphi _{b}^{\ nm}+l.o.t. \\
&=&\func{curl}\left( \func{div}h\right) _{b}+l.o.t.
\end{eqnarray*}%
where we have used the Ricci identity for the Riemann tensor in the second
to last line. Hence, (\ref{pi7genopa}) gives us (\ref{pi7genoplin}). Now let
us look at the $\pi _{1\oplus 27}$ component. Substituting $\dot{T}$ into (%
\ref{pi127genop}) we get 
\begin{eqnarray*}
\pi _{1\oplus 27}\left( DP_{\varphi }\right) \left( \chi \right)
&=&a_{27}\left( -\nabla _{a}^{{}}\nabla _{m}^{{}}h_{nb}^{{}}\varphi _{\ \
(d}^{mn}\varphi _{\ \ e)}^{ab}+\nabla _{m}^{{}}\nabla
_{n}^{{}}X_{(d}^{{}}\varphi _{\ \ \ e)}^{mn}\right) \\
&&+a_{1}\left( \nabla _{a}^{{}}\nabla _{m}^{{}}h_{nb}^{{}}\varphi _{\ \ \
p}^{mn}\varphi _{{}}^{abp}+\nabla _{a}\nabla _{b}X_{c}\varphi _{\ \ \
}^{abc}\right) g_{de} \\
&&+b_{27}\left( -\nabla _{a}^{{}}\nabla _{m}^{{}}h_{n(d}^{{}}\varphi _{\ \
\left\vert b\right\vert }^{mn}\varphi _{\ \ e)}^{ab}+\nabla _{a}^{{}}\nabla
_{(d}^{{}}X_{\left\vert b\right\vert }^{{}}\varphi _{\ \ e)}^{ab}\right)
+l.o.t. \\
&=&a_{27}\left( -\nabla _{a}^{{}}\nabla _{m}^{{}}h_{nb}^{{}}\varphi _{\ \
(d}^{mn}\varphi _{\ \ e)}^{ab}\right) \\
&&+b_{27}\left( \nabla ^{2}h_{de}-\nabla _{a}^{{}}\nabla _{(d}^{{}}h_{\
e)}^{a}+\left( \nabla _{a}^{{}}\nabla _{(d}^{{}}X_{\left\vert b\right\vert
}^{{}}\right) \varphi _{\ \ e)}^{ab}\right) \\
&&+a_{1}\left( \nabla ^{2}\left( \func{Tr}h\right) -\nabla _{a}\nabla
_{b}h^{ab}\right) g_{de}+l.o.t.
\end{eqnarray*}%
Using the Ricci identity again to switch the order of the derivatives in the 
$b_{27}$ term, we get (\ref{pi127genoplin}).
\end{proof}

Note that the Laplacian of a general $G_{2}$-structure $\varphi $ is given
by the above operator $P$ with $a_{1}=-\frac{3}{7}$, $a_{27}=-3$ and $%
b_{27}=0$. As an example, consider the well-studied case of the Laplacian $%
\Delta _{\varphi }$ of the closed $3$-form $\varphi $. In this case, since $%
\varphi $ is closed, $T=\tau _{14}$ only has a component in the $14$%
-dimensional representation, and is hence anti-symmetric. Moreover, it is a
known fact (see e.g. \cite{GrigorianG2Torsion1}), that in this case 
\begin{equation}
d^{\ast }\tau _{14}=0  \label{T14cond}
\end{equation}%
and hence the highest order term $\func{div}T$ in (\ref{pi7lapphi})
vanishes. Moreover, since $\tau _{14}\in \Lambda _{14}^{2}$, we also have $%
\left( \tau _{14}\right) _{ab}\varphi ^{abc}=0$ (i.e. the projection to $%
\Lambda _{7}^{2}$ vanishes). Thus the leading order term $\func{Tr}\left( 
\func{curl}T\right) $ in (\ref{pi1lapphi}) becomes:%
\begin{eqnarray*}
\func{Tr}\left( \func{curl}T\right)  &=&-\left( \nabla _{a}T_{bc}\right)
\varphi ^{abc} \\
&=&-\nabla _{a}\left( \left( \tau _{14}\right) _{bc}\varphi ^{abc}\right)
+\left( \tau _{14}\right) _{bc}\nabla _{a}\varphi ^{abc} \\
&=&-\left( \tau _{14}\right) _{bc}\left( \tau _{14}\right) _{ea}\psi ^{eabc}
\\
&=&2\left( \tau _{14}\right) _{bc}\left( \tau _{14}\right) ^{bc}
\end{eqnarray*}%
where we have used the identity 
\begin{equation}
\omega _{ab}\psi _{\ \ \ cd}^{ab}=-2\omega _{cd}  \label{l14psiid}
\end{equation}%
for any $\omega \in \Lambda _{14}^{2}$. Overall, after similarly simplifying
other terms, we obtain 
%TCIMACRO{\TeXButton{TeX field}{\begin{subequations}}}%
%BeginExpansion
\begin{subequations}%
%EndExpansion
\label{lapphiclosed} 
\begin{eqnarray}
\pi _{1}\Delta _{\varphi }\varphi  &=&\frac{2}{7}\left( \tau _{14}\right)
_{ab}\left( \tau _{14}\right) ^{ab}  \label{pi1lapphiclosed} \\
\pi _{7}\Delta _{\varphi }\varphi  &=&0  \label{pi7lapphiclosed} \\
\pi _{27}\Delta _{\varphi }\varphi  &=&3\func{curl}\left( \tau _{14}\right)
_{\left( ad\right) }-\frac{9}{7}g_{ad}\left( \tau _{14}\right) _{bc}\left(
\tau _{14}\right) ^{bc}+3\left( \tau _{14}\right) _{\ a}^{b}\left( \tau
_{14}\right) _{bd}  \label{pi27lapphiclosed}
\end{eqnarray}%
%TCIMACRO{\TeXButton{TeX field}{\end{subequations}}}%
%BeginExpansion
\end{subequations}%
%EndExpansion
Thus comparing the highest order terms we find that $\Delta \varphi $ in
this case corresponds to the operator $P$ in Proposition \ref{propgenop}
with $a_{1}=a_{27}=0$ and $b_{27}=3$. Hence, from Proposition \ref{propgenop}
we get the linearization: 
\begin{eqnarray}
\pi _{7}\left( D_{\varphi }\Delta _{\varphi }\right) \left( \chi \right) 
&=&0 \\
\pi _{1\oplus 27}\left( D_{\varphi }\Delta _{\varphi }\right) \left( \chi
\right)  &=&3\left( \nabla ^{2}h_{de}-\nabla _{(d}\left( \func{div}h\right)
_{e)}+\nabla _{(d}\left( \func{curl}X\right) _{e)}\right) +l.o.t.
\label{lapphiclosedlin}
\end{eqnarray}%
Let us now assume that $\chi $ is closed, and is of the form $\chi
=X\lrcorner \psi +3\mathrm{i}_{\varphi }\left( h\right) $. Then from
Proposition \ref{PropComponents}, we know the type decomposition of $d\chi $
and $d^{\ast }d\chi $ up to torsion terms. Since all of these components
have to be zero, we have the following relations%
%TCIMACRO{\TeXButton{TeX field}{\begin{subequations}}}%
%BeginExpansion
\begin{subequations}%
%EndExpansion
\label{dchiclosedcond}%
\begin{eqnarray}
0 &=&\nabla _{a}\left( \func{div}X\right) +l.o.t.  \label{dchiclosedcond1} \\
0 &=&\nabla _{a}\func{Tr}h-\left( \func{div}h\right) _{a}-\left( \func{curl}%
X\right) _{a}+l.o.t.  \label{dchiclosedcond2} \\
0 &=&\nabla ^{2}\func{Tr}h-\func{div}\left( \func{div}h\right) +l.o.t. \\
0 &=&\nabla ^{2}X_{a}-\left( \func{curl}\left( \func{div}h\right) \right)
_{a}+l.o.t.
\end{eqnarray}%
%TCIMACRO{\TeXButton{TeX field}{\end{subequations}}}%
%BeginExpansion
\end{subequations}%
%EndExpansion
Also note for a vector field $v$, we have 
\begin{equation*}
d\left( v\lrcorner \varphi \right) _{abc}=3\nabla _{\lbrack a}\left(
v^{d}\varphi _{bc]d}\right) =3\left( \nabla _{\lbrack a}v^{d}\right) \varphi
_{bc]d}+l.o.t.
\end{equation*}%
Then, the symmetric part of $\nabla v$ simply gives the $\Lambda
_{1}^{3}\oplus \Lambda _{27}^{3}$ component of $d\left( v\lrcorner \varphi
\right) $, and the $\Lambda _{7}^{2}$ part of $\nabla v$ gives rise to the $%
\Lambda _{7}^{3}$ component of $d\left( v\lrcorner \varphi \right) $. In
fact, using identities in \cite{karigiannis-2007}, this can be re-written as 
\begin{equation}
d\left( v\lrcorner \varphi \right) =\frac{1}{2}\left( \func{curl}v\right)
\lrcorner \psi +3\mathrm{i}_{\varphi }\left( \nabla _{(m}v_{n)}\right)
+l.o.t.  \label{dlam2-7}
\end{equation}%
Now let $Y=\nabla \func{Tr}h$ and $Z=\func{curl}X$. Then, from (\ref{dlam2-7}%
), we get%
\begin{eqnarray}
\pi _{7}d\left( Y\lrcorner \varphi \right) _{a} &=&\frac{1}{2}\nabla
_{m}\left( \nabla _{n}\func{Tr}h\right) \varphi _{\ \ \ a}^{mn}=l.o.t. \\
\pi _{1\oplus 27}d\left( Y\lrcorner \varphi \right) _{ab} &=&3\left( \nabla
_{(a}\nabla _{b)}\func{Tr}h\right)   \notag \\
&=&3\left( \left( \nabla _{(a}^{{}}\nabla _{m}^{{}}X_{n}^{{}}\right) \varphi
_{\ \ \ b)}^{mn}+\nabla _{(a}^{{}}\nabla _{\left\vert r\right\vert
}^{{}}h_{\ b)}^{r}\right) +l.o.t.
\end{eqnarray}%
where we have applied (\ref{dchiclosedcond2}) for the second relation.
Similarly, we find that 
\begin{eqnarray}
\pi _{7}d\left( Z\lrcorner \varphi \right) _{p} &=&\frac{1}{2}\left( \nabla
_{d}\nabla _{a}X_{b}\right) \varphi _{\ \ e}^{ab}\varphi _{\ \ p}^{de}+\text{%
$l.o.t.$}  \notag \\
&=&\frac{1}{2}\left( \nabla _{d}\nabla _{a}X_{b}\right) \left(
-g_{{}}^{ad}g_{\ p}^{d}+g_{\ p}^{a}g^{bd}-\psi _{\ \ \ p}^{abd}\right) +%
\text{$l.o.t.$}  \notag \\
&=&-\frac{1}{2}\nabla ^{2}X_{p}+\frac{1}{2}\nabla _{p}\left( \nabla
_{b}X^{b}\right) +\frac{1}{2}Ric_{pb}X^{b}+\text{$l.o.t.$}  \notag \\
&=&-\frac{1}{2}\nabla ^{2}X_{p}+\text{$l.o.t.$} \\
\pi _{1\oplus 27}d\left( Z\lrcorner \varphi \right)  &=&3\mathrm{i}_{\varphi
}\left( \left( \nabla _{(d}\nabla _{|a}X_{b|}\right) \varphi _{\ \
e)}^{ab}\right) +\text{$l.o.t.$}
\end{eqnarray}%
where for the first relation we have used (\ref{phiphi1}) to contract the $%
\varphi $ terms, and then used (\ref{dchiclosedcond1}) and the fact that the
Ricci tensor depends only on the torsion.

Now note that 
\begin{equation}
2d\left( Z\lrcorner \varphi \right) -d\left( Y\lrcorner \varphi \right)
=-\left( \nabla ^{2}X\right) \lrcorner \psi +3\mathrm{i}_{\varphi }\left(
-\nabla _{(d}^{{}}\nabla _{\left\vert r\right\vert }^{{}}h_{\ e)}^{r}+\left(
\nabla _{(d}^{{}}\nabla _{|a}^{{}}X_{b|}^{{}}\right) \varphi _{\ \
e)}^{ab}\right) +\text{$l.o.t.$}
\end{equation}%
So in fact, 
\begin{equation}
D_{\varphi }\Delta _{\varphi }\left( \chi \right) =\left( \nabla
^{2}X\right) \lrcorner \psi +3\mathrm{i}_{\varphi }\left( \nabla
^{2}h_{de}\right) +2d\left( Z\lrcorner \varphi \right) -d\left( Y\lrcorner
\varphi \right) +l.o.t.  \label{dlapclosedlin}
\end{equation}%
It is easy to see that for any $3$-form $\chi $, 
\begin{equation}
\Delta _{\varphi }\chi =-\left( \nabla ^{2}X\right) \lrcorner \psi -3\mathrm{%
i}_{\varphi }\left( \nabla ^{2}h_{de}\right) +\text{$l.o.t.$}  \label{lapchi}
\end{equation}%
Since $\chi $ is closed, $\Delta _{\varphi }\chi $ is exact, so the lower
order terms in (\ref{dlapclosedlin}) are exact, so we rewrite them as an
exterior derivative of some $2$-form valued algebraic function of $\chi $.
Thus from (\ref{dlapclosedlin}) we conclude that 
\begin{equation}
D_{\varphi }\Delta _{\varphi }\left( \chi \right) =-\Delta _{\varphi }\chi
+2d\left( Z\lrcorner \varphi \right) -d\left( Y\lrcorner \varphi \right)
+dF\left( \chi \right)
\end{equation}%
Hence we have derived the same result as in \cite{BryantXu, XuYe}:

\begin{proposition}[\protect\cite{BryantXu, XuYe}]
If $\varphi $ is a closed $G_{2}$-structure, then the linearization of the
Laplacian $\Delta _{\varphi }$ at $\varphi $, evaluated at a closed form $%
\chi $ is given by 
\begin{equation}
D_{\varphi }\Delta _{\varphi }\left( \chi \right) =-\Delta _{\varphi }\chi -%
\mathcal{L}_{V_{\chi }}\varphi +dF\left( \chi \right)  \label{dlaplin}
\end{equation}%
where $V_{\varphi }=-2\func{curl}X+\nabla \func{Tr}h$ and $F\left( \chi
\right) $ is a $2$-form valued algebraic function of $\chi $.
\end{proposition}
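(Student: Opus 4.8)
The plan is to recognize that the real computation is already finished at equation (\ref{dlapclosedlin}), and that the proposition is just a repackaging of the first‑order correction term as a Lie derivative. The discussion preceding the statement has established
\begin{equation*}
D_{\varphi}\Delta_{\varphi}\left(\chi\right)=-\Delta_{\varphi}\chi+2d\left(Z\lrcorner\varphi\right)-d\left(Y\lrcorner\varphi\right)+dF\left(\chi\right),
\end{equation*}
where $Y=\nabla\func{Tr}h$, $Z=\func{curl}X$, and $F(\chi)$ is a $2$-form valued algebraic function of $\chi$. So the first step is to invoke Cartan's formula $\mathcal{L}_{V}\varphi=V\lrcorner d\varphi+d\left(V\lrcorner\varphi\right)$ and use $d\varphi=0$, reducing it to $\mathcal{L}_{V}\varphi=d\left(V\lrcorner\varphi\right)$ for any vector field $V$.

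Next, with the vector field $V_{\chi}=-2\func{curl}X+\nabla\func{Tr}h=-2Z+Y$ of the statement, linearity of the interior product and of $d$ gives $\mathcal{L}_{V_{\chi}}\varphi=-2\,d\left(Z\lrcorner\varphi\right)+d\left(Y\lrcorner\varphi\right)$, hence $2\,d\left(Z\lrcorner\varphi\right)-d\left(Y\lrcorner\varphi\right)=-\mathcal{L}_{V_{\chi}}\varphi$. Substituting this into the displayed formula produces (\ref{dlaplin}) at once.

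For completeness I would also spell out why the residual terms genuinely form $dF(\chi)$ with $F$ algebraic in $\chi$. The form $D_{\varphi}\Delta_{\varphi}(\chi)$ is exact, because $\Delta_{\varphi+t\chi}(\varphi+t\chi)=d\,d^{\ast}_{\varphi+t\chi}(\varphi+t\chi)$ for every small $t$ (since $\varphi+t\chi$ stays closed), so $D_{\varphi}\Delta_{\varphi}(\chi)=d\big(\tfrac{d}{dt}\big|_{0}d^{\ast}_{\varphi+t\chi}(\varphi+t\chi)\big)$; and $\Delta_{\varphi}\chi$ is exact since $\chi$ is closed, while $d\left(Z\lrcorner\varphi\right)$ and $d\left(Y\lrcorner\varphi\right)$ are visibly exact. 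Comparing the second‑derivative parts via (\ref{lapchi}) shows the leftover carries no second derivatives of $\chi$, hence it is $d$ of a $2$-form assembled pointwise from $\chi$ together with the metric and the torsion, which is the function $F(\chi)$.

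The only genuine work here lies \emph{upstream} of the statement: the representation‑theoretic identities of Proposition \ref{PropComponents}, the general linearization formula of Proposition \ref{propgenop}, and the careful bookkeeping of lower‑order terms leading to (\ref{dlapclosedlin}). Given all of that, the proposition is a one‑line identification, and the single point warranting a moment's care is the exactness argument that licenses absorbing the residual first‑order terms into $dF(\chi)$; I would present that step explicitly rather than leave it implicit.
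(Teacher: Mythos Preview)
Your proposal is correct and follows essentially the same approach as the paper. The paper's ``proof'' is precisely the discussion culminating in the displayed equation you quote, followed by the implicit identification $2d(Z\lrcorner\varphi)-d(Y\lrcorner\varphi)=-\mathcal{L}_{V_\chi}\varphi$ via Cartan's formula with $d\varphi=0$; you have simply made that identification (and the exactness argument for the residual $dF(\chi)$) explicit, which if anything improves on the paper's presentation.
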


The representation of $D_{\varphi }\Delta _{\varphi }\left( \chi \right) $
as (\ref{dlaplin}) shows us that we can apply the DeTurck trick - adding $%
\mathcal{L}_{V_{\varphi }}\varphi $ to (\ref{dlaplin}) gives a flow that is
elliptic in the direction of closed forms. In \cite{BryantXu, XuYe} this is
then used to show short-time existence and uniqueness for the Laplacian flow
for $3$-forms (\ref{orig-flow}).

Now let us consider the Laplacian of $\varphi $ for a co-closed $G_{2}$%
-structure. In this case, $T_{ab}$ is now symmetric. We can now write 
\begin{equation}
d\varphi =3\ast \mathrm{i}_{\varphi }\left( -T+\frac{1}{3}\left( \func{Tr}%
T\right) g\right) .  \label{dphicc}
\end{equation}%
From the condition $d^{2}\varphi =0$, or equivalently, $\left( d^{\ast
}\right) ^{2}\psi =0$, the torsion $T$ satisfies certain Bianchi-type
identities.

\begin{lemma}
\label{LemSymTConds}Suppose $\varphi $ is a co-closed $G_{2}$-structure.
Then the torsion tensor $T$ satisfies the following identities 
\begin{eqnarray}
\func{div}T &=&\nabla \func{Tr}T  \label{Tsymcond} \\
\left( \func{curl}T\right) _{[ab]} &=&0  \label{Tsymcond2}
\end{eqnarray}
\end{lemma}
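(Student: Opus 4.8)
The plan is to derive both identities as the Bianchi-type consequences of $d^{2}\varphi=0$, by applying the type decomposition of $d^{\ast}\chi$ from Proposition \ref{PropComponents}. Set $\chi:=d^{\ast}\psi$. Since the $G_{2}$-structure is co-closed we have $d\psi=0$, so $d^{\ast}\chi=(d^{\ast})^{2}\psi=0$. The first task is to identify $\chi$ in the normal form used in Proposition \ref{PropComponents}: using $\psi=\ast\varphi$, the Hodge formula $d^{\ast}=\ast d\ast$ on $4$-forms in dimension $7$, and equation (\ref{dphicc}), one computes $\chi=d^{\ast}\psi=\ast d\varphi=3\,\mathrm{i}_{\varphi}(s)$ with $s:=-T+\tfrac{1}{3}(\func{Tr}T)g$. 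Hence $\chi=X\lrcorner\psi+3\mathrm{i}_{\varphi}(h)$ with $X=0$ and $h=s$, and I may substitute these into the formulas (\ref{d3scompall})--(\ref{d3scomp7}) for $d^{\ast}\chi$, which must then vanish. Before doing so I would record three elementary facts: $\func{Tr}s=\tfrac{4}{3}\func{Tr}T$; $\func{div}s=-\func{div}T+\tfrac{1}{3}\nabla\func{Tr}T$; and $\func{curl}\bigl((\func{Tr}T)g\bigr)=(\nabla\func{Tr}T)\lrcorner\varphi$ (from $\nabla g=0$ and a cyclic relabelling of $\varphi$), together with the crucial observation that $s$ is a polynomial in $T$, hence $Ts=sT$ is symmetric; consequently every torsion term that survives the substitution in (\ref{d3scomps}) is of the shape ``$Ts$ contracted against $\varphi$ or $\psi$'' and therefore vanishes, because $\varphi$ and $\psi$ are totally antisymmetric.

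For (\ref{Tsymcond}): the condition $\pi_{7}d^{\ast}\chi=0$ in (\ref{d3scomp7}), with $X=0$ and the torsion term killed by the observation above, reads $\func{div}s=-\tfrac{1}{2}\nabla\func{Tr}s$; substituting the values of $\func{div}s$ and $\func{Tr}s$ gives $\func{div}T=\nabla\func{Tr}T$ at once. For (\ref{Tsymcond2}): the full equation $(d^{\ast}\chi)_{bc}=0$ in (\ref{d3scompall}), with $X=0$ and the torsion term again gone, reduces to $\bigl((\func{div}s)\lrcorner\varphi\bigr)_{bc}+2(\func{curl}s)_{[bc]}=0$. Feeding in (\ref{Tsymcond}) gives $\func{div}s=-\tfrac{2}{3}\nabla\func{Tr}T$, so the first term equals $-\tfrac{2}{3}\bigl((\nabla\func{Tr}T)\lrcorner\varphi\bigr)_{bc}$; on the other hand, by the third preliminary fact, the antisymmetric part of $\func{curl}s=-\func{curl}T+\tfrac{1}{3}(\nabla\func{Tr}T)\lrcorner\varphi$ is $-(\func{curl}T)_{[bc]}+\tfrac{1}{3}\bigl((\nabla\func{Tr}T)\lrcorner\varphi\bigr)_{bc}$. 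Substituting both expressions into the reduced equation, the $(\nabla\func{Tr}T)\lrcorner\varphi$ terms cancel and we are left with $(\func{curl}T)_{[bc]}=0$.

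I expect the main obstacle to be organizational rather than conceptual: pinning down the Hodge-duality sign in $\chi=d^{\ast}\psi=3\mathrm{i}_{\varphi}(s)$ (although any nonzero constant would serve equally well), and above all checking that every lower-order torsion term in (\ref{d3scomps}) really does drop out after the substitutions $X=0$, $h=s$. That step is precisely where one uses that $s$ commutes with the symmetric tensor $T$, so that $Ts$ is symmetric and hence annihilated by any contraction with $\varphi$ or $\psi$; without it the two identities would acquire quadratic-in-$T$ corrections. One should also note that, since $(\func{div}s)\lrcorner\varphi\in\Lambda_{7}^{2}$, the content of the reduced equation used for (\ref{Tsymcond2}) is exactly the vanishing of $\pi_{14}\func{curl}s$, which together with (\ref{Tsymcond}) is equivalent to the stated identity.
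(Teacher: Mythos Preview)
Your proposal is correct and follows exactly the same route as the paper's proof: set $h=-T+\tfrac{1}{3}(\func{Tr}T)g$, $X=0$ in Proposition~\ref{PropComponents}, read off (\ref{Tsymcond}) from the $\pi_{7}$ component (\ref{d3scomp7}), and then (\ref{Tsymcond2}) from the full expression (\ref{d3scompall}). The paper merely asserts these implications, whereas you have spelled out the one nontrivial point---that the algebraic torsion terms in (\ref{d3scomps}) vanish because $Th$ is symmetric (as $h$ is a polynomial in the symmetric tensor $T$) and hence is annihilated by contraction with $\varphi$ or $\psi$---so your argument is in fact a more complete version of the same proof.
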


\begin{proof}
We have the condition that $d^{\ast }\left( 3\mathrm{i}_{\varphi }\left( -T+%
\frac{1}{3}\left( \func{Tr}T\right) g\right) \right) =0$. Set $h=-T+\frac{1}{%
3}\left( \func{Tr}T\right) g$ and $X=0$ in Proposition \ref{PropComponents}.
Then from (\ref{d3scomp7}), we get (\ref{Tsymcond}) and (\ref{Tsymcond2})
then follows from (\ref{d3scompall}).
\end{proof}

\begin{proposition}
Suppose $\varphi $ is a co-closed $G_{2}$-structure, then the type
decomposition of $\Delta _{\varphi }\varphi $ is given by%
%TCIMACRO{\TeXButton{TeX field}{\begin{subequations}}}%
%BeginExpansion
\begin{subequations}%
%EndExpansion
\label{lapdecomsym} 
\begin{eqnarray}
\pi _{1}\Delta _{\varphi }\varphi &=&\frac{2}{7}\left\vert T\right\vert ^{2}+%
\frac{2}{7}\left( \func{Tr}T\right) ^{2} \\
\left( \pi _{7}\Delta _{\varphi }\varphi \right) &=&-\func{div}T=-\nabla 
\func{Tr}\left( T\right) \\
\left( \pi _{27}\Delta _{\varphi }\varphi \right) _{ad} &=&-3\left( \func{%
curl}T\right) _{ad}-\frac{3}{2}\left( T\circ T\right) _{ad}-3T_{ab}^{{}}T_{\
d}^{b} \\
&&+\frac{3}{14}g_{ad}\left\vert T\right\vert ^{2}+\frac{3}{14}g_{ad}\left( 
\func{Tr}T\right) ^{2}  \notag
\end{eqnarray}%
%TCIMACRO{\TeXButton{TeX field}{\end{subequations}}}%
%BeginExpansion
\end{subequations}%
%EndExpansion
\end{proposition}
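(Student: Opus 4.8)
The plan is to deduce the stated formulas directly from the general type decomposition (\ref{lapdecom}), specialized to the co-closed case. The only new inputs are that the full torsion tensor $T_{ab}$ is symmetric for a co-closed $G_{2}$-structure, and that it then obeys the Bianchi-type identities of Lemma \ref{LemSymTConds}, namely $\func{div}T=\nabla \func{Tr}T$ and $(\func{curl}T)_{[ab]}=0$. First I would collect the purely algebraic consequences of $T=T^{t}$: one has $\func{Tr}(T^{t}T)=\func{Tr}(T^{2})=|T|^{2}$; the composition product $T\circ T$ of (\ref{phphprod}) is symmetric, as is $T_{a}^{\ b}T_{bd}=(T^{2})_{ad}$; and every contraction of the totally antisymmetric $\psi $ against a symmetric index pair of $T$ vanishes, in particular $T_{ab}T_{cd}\psi ^{abcd}=0$, $\psi _{mnpq}T^{mn}T^{pq}=0$, $T_{ab}\varphi ^{abe}=0$ and $(T^{2})^{bc}\varphi _{bc}^{\ \ e}=0$. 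I would also use the fact recorded earlier that the curl of a symmetric $2$-tensor is traceless, so that $\func{Tr}(\func{curl}T)=0$.

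Feeding these into (\ref{pi1lapphi})--(\ref{pi27lapphi}) term by term: in (\ref{pi1lapphi}) the $\func{Tr}(\func{curl}T)$ and $T_{ab}T_{cd}\psi ^{abcd}$ terms drop, and $2\func{Tr}(T^{t}T)-\func{Tr}(T^{2})=|T|^{2}$, leaving $\frac{2}{7}(|T|^{2}+(\func{Tr}T)^{2})$; in (\ref{pi7lapphi}) the two quadratic terms $T^{ab}T_{a}^{\ c}\varphi _{bc}^{\ \ e}$ and $(\func{Tr}T)T_{ab}\varphi ^{abe}$ vanish by the symmetric-versus-antisymmetric observation, leaving $-\func{div}T=-\nabla \func{Tr}T$ via (\ref{Tsymcond}); and in (\ref{pi27lapphi}) the $\func{Tr}(\func{curl}T)g_{de}$ term is gone, $(\func{curl}T^{t})_{(de)}=(\func{curl}T)_{de}$ by (\ref{Tsymcond2}), the $\psi $-$T$-$T$ contractions $\psi _{\ (d}^{abc}T_{|ab|}T_{c|e)}$ and $\psi _{\ (d}^{abc}T_{|ab|}T_{e)c}$ both vanish since $\psi $ is again paired with a symmetric block of $T$-indices, the $T_{ab}T_{cd}\psi ^{abcd}$ and $\psi _{mnpq}T^{mn}T^{pq}$ terms are gone, $2\func{Tr}(T^{t}T)-\func{Tr}(T^{2})=|T|^{2}$, and $T_{\ d}^{a}T_{ae}=(T^{2})_{de}$; what survives is the asserted $\pi _{27}$-expression.

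The step that genuinely needs care is the bookkeeping of the quadratic-in-$T$ terms, which has to be carried out with the $\varphi $-$\varphi $ and $\psi $-$\psi $ contraction identities of Proposition \ref{propcontractions}. In particular, I would re-check the coefficient of $(T\circ T)$ in the $\pi _{27}$ formula against the factor $-\frac{3}{2}$ in the statement, since this is precisely the place where an arithmetic slip in reducing (\ref{pi27lapphi}) would not be caught by the structural argument above; if the two fail to match on a first pass, the likely culprit is an additional contraction identity relating $(T\circ T)_{ab}$, $(T^{2})_{ab}$ and $\psi $-traces that must be applied. Apart from this, the argument is a routine specialization, and I do not anticipate any conceptual obstacle.
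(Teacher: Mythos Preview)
Your proposal is correct and follows essentially the same route as the paper: specialize the general decomposition (\ref{lapdecom}) using that $T$ is symmetric together with the Bianchi-type identities $\func{div}T=\nabla\func{Tr}T$ and $(\func{curl}T)_{[ab]}=0$ from Lemma~\ref{LemSymTConds}. Your bookkeeping of which terms drop (the $\psi$-$T$-$T$ contractions, $T_{ab}\varphi^{abe}$, $\func{Tr}(\func{curl}T)$, etc.) is accurate, and your caution about re-checking the $(T\circ T)$ coefficient is exactly the right place to be careful when carrying out the reduction.
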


\begin{proof}
We obtain this directly from (\ref{lapdecom}) by using the fact that $T$ is
symmetric and also applying identity (\ref{Tsymcond}). Note that from (\ref%
{Tsymcond2}) it follows that the symmetric part of $\func{curl}T$ is
actually equal to $\func{curl}T$.
\end{proof}

Now suppose we have the flow 
\begin{equation}
\frac{d\psi }{dt}=\Delta _{\psi }\psi =\ast _{\varphi }\Delta _{\varphi
}\varphi .  \label{lapcoflow1}
\end{equation}%
Then from (\ref{lapdecomsym}), 
\begin{eqnarray}
\Delta _{\psi }\psi &=&d\func{Tr}\left( T\right) \wedge \varphi +\left( 
\frac{2}{7}\left\vert T\right\vert ^{2}+\frac{2}{7}\left( \func{Tr}T\right)
^{2}\right) \psi  \notag \\
&&+3\ast \mathrm{i}_{\varphi }\left( -\left( \func{curl}T\right) _{ad}-\frac{%
1}{2}\left( T\circ T\right) _{ad}-T_{ab}^{{}}T_{\ d}^{b}\right.  \notag \\
&&\left. +\frac{1}{14}g_{ad}\left\vert T\right\vert ^{2}+\frac{1}{14}%
g_{ad}\left( \func{Tr}T\right) ^{2}\right)  \notag \\
&=&d\func{Tr}\left( T\right) \wedge \varphi  \label{lappsi} \\
&&+3\ast \mathrm{i}_{\varphi }\left( -\left( \func{curl}T\right) _{ad}-\frac{%
1}{2}\left( T\circ T\right) _{ad}-T_{ab}^{{}}T_{\ d}^{b}\right.  \notag \\
&&\left. +\frac{1}{6}g_{ad}\left\vert T\right\vert ^{2}+\frac{1}{6}%
g_{ad}\left( \func{Tr}T\right) ^{2}\right)  \notag
\end{eqnarray}%
The corresponding evolution of the metric now follows from Proposition \ref%
{propflow} - in this case, 
\begin{eqnarray*}
h_{ad} &=&-\left( \func{curl}T\right) _{ad}-\frac{1}{2}\left( T\circ
T\right) _{ad}-T_{ab}^{{}}T_{\ d}^{b}+\frac{1}{6}g_{ad}\left\vert
T\right\vert ^{2}+\frac{1}{6}g_{ad}\left( \func{Tr}T\right) ^{2} \\
\func{Tr}h &=&\frac{2}{3}\left( \left\vert T\right\vert ^{2}+\left( \func{Tr}%
T\right) ^{2}\right)
\end{eqnarray*}%
hence the evolution of the metric becomes%
\begin{eqnarray}
\frac{d}{dt}g_{ab} &=&\frac{1}{2}\left( \func{Tr}h\right) g_{ab}-2h_{ab} 
\notag \\
&=&2\left( \func{curl}T\right) _{ab}+\left( T\circ T\right)
_{ab}+2T_{ac}^{{}}T_{\ b}^{c}  \label{coclosedgdt}
\end{eqnarray}%
Note that for a co-closed $G_{2}$, the Ricci curvature is given by 
\begin{equation}
R_{ad}=-\left( \func{curl}T\right) _{ad}-T_{ab}^{{}}T_{\ d}^{b}+\left( \func{%
Tr}T\right) T_{ad}.  \label{coclosedRicci}
\end{equation}%
The details can be found in \cite{GrigorianG2Torsion1,karigiannis-2007}.
Hence we can rewrite (\ref{coclosedgdt}) as 
\begin{equation}
\frac{d}{dt}g_{ab}=-2R_{ab}+\left( T\circ T\right) _{ab}+2\left( \func{Tr}%
T\right) T_{ad}.  \label{coclosedgdt2}
\end{equation}%
Therefore, same as for the Laplacian flow of the $3$-form $\varphi $ \cite%
{bryant-2003}, the leading term of the metric flow corresponds to the Ricci
flow. Also, the evolution of the volume form under this flow is given by 
\begin{equation}
\frac{d}{dt}\sqrt{\det g}=\frac{1}{2}\left( \left\vert T\right\vert
^{2}+\left( \func{Tr}T\right) ^{2}\right) \sqrt{\det g}
\label{cocloseddetgdt}
\end{equation}%
For non-trivial torsion this is always positive and is zero if and only if $%
T=0$. Hence the volume functional $V$ (\ref{volfuncpsi}) grows monotonically
along the flow (\ref{lapcoflow1}), with an extremum being reached along a
flow line if and only if the torsion vanishes. We have already seen in
Proposition \ref{propvolextr} that it is generally true that torsion-free $%
G_{2}$-structures correspond to critical points of $V$ when restricted to a
fixed cohomology class of $\varphi $. The Laplacian flow of $\varphi $ (\ref%
{orig-flow}) shares this property. As shown in \cite{bryant-2003}, the
volume form grows monotonically along the flow (\ref{orig-flow}) and in \cite%
{BryantXu}, it was interpreted as the gradient flow of the volume functional
with respect to an unusual metric. Similarly, we can do the same for (\ref%
{lapcoflow1}). As in \cite{BryantXu}, define the following metric on $%
dC^{\infty }\left( M,\Lambda ^{3}\left( M\right) \right) $:%
\begin{equation*}
\left\langle \chi _{1},\chi _{2}\right\rangle _{\psi }=\frac{1}{4}%
\int_{M}G_{\psi }\chi _{1}\wedge \ast \chi _{2}.
\end{equation*}%
for any exact $4$-forms $\chi _{1}$ and $\chi _{2}$. As before, $G_{\psi }$
is the Green's operator for the Hodge Laplacian $\Delta _{\psi }$. From (\ref%
{delV}), we know that for an exact $4$-form $\dot{\psi}$, the deformation of 
$V$ is given by 
\begin{eqnarray*}
\delta V\left( \dot{\psi}\right) &=&\frac{1}{4}\int_{M}\dot{\psi}\wedge
\varphi =\frac{1}{4}\int_{M}\Delta _{\psi }G_{\psi }\dot{\psi}\wedge \varphi
\\
&=&\frac{1}{4}\int_{M}G_{\psi }\dot{\psi}\wedge \ast \Delta _{\psi }\psi \\
&=&\left\langle \dot{\psi},\Delta _{\psi }\psi \right\rangle _{\psi }
\end{eqnarray*}%
Hence indeed, the gradient flow of $V$ is given by (\ref{lapcoflow1}).

Now consider the linearization of $\Delta _{\psi }\psi $.

\begin{proposition}
The linearization of $\Delta _{\psi }$ at $\psi $ is given by 
\begin{eqnarray}
\pi _{7}\left( D_{\psi }\Delta _{\psi }\right) \left( \chi \right)
&=&d\left( \func{div}X\right) \wedge \varphi +l.o.t. \\
\pi _{1\oplus 27}\left( D_{\psi }\Delta _{\psi }\right) \left( \chi \right)
&=&3\ast \mathrm{i}_{\varphi }\left( \nabla ^{2}h_{de}-\nabla
_{(d}^{{}}\nabla _{\left\vert a\right\vert }^{{}}h_{\ e)}^{a}-\left( \nabla
_{a}^{{}}\nabla _{(d}^{{}}X_{\left\vert b\right\vert }^{{}}\right) \varphi
_{\ \ e)}^{ab}\right.  \label{pi127lappsilin} \\
&&\left. +\frac{1}{4}\nabla _{a}\nabla _{d}\func{Tr}h-\frac{1}{4}\left(
\nabla ^{2}\func{Tr}h\right) g_{ad}\right) +l.o.t.  \notag
\end{eqnarray}%
where $\chi $ is a $4$-form given by 
\begin{equation*}
\chi =\ast \left( X\lrcorner \psi +3\mathrm{i}_{\varphi }\left( h\right)
\right) .
\end{equation*}%
Moreover, if $\chi $ is a closed form, then the $1\oplus 27$ component of $%
D_{\psi }\Delta _{\psi }\left( \chi \right) $ can written as 
\begin{eqnarray}
\pi _{1\oplus 27}\left( D_{\psi }\Delta _{\psi }\right) \left( \chi \right)
&=&\frac{3}{2}\ast \mathrm{i}_{\varphi }\left( \nabla ^{2}h_{de}-\nabla
_{(d}^{{}}\nabla _{\left\vert a\right\vert }^{{}}h_{\ e)}^{a}-\nabla
_{a}^{{}}\nabla _{(d}^{{}}X_{\left\vert b\right\vert }^{{}}\varphi _{\ \
e)}^{ab}\right.  \label{pi127lappsilin3} \\
&&\left. -\nabla _{b}^{{}}\nabla _{m}^{{}}h_{cn}\varphi _{\ \
(d}^{bc}\varphi _{\ \ \ e)}^{mn}\right) +l.o.t.  \notag
\end{eqnarray}
\end{proposition}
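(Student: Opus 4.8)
The plan is to linearise the explicit formula (\ref{lappsi}) for $\Delta _{\psi }\psi $, namely $\Delta _{\psi }\psi =d(\func{Tr}T)\wedge \varphi +3\ast \mathrm{i}_{\varphi }(k)$ with $k_{de}=-(\func{curl}T)_{de}-\tfrac{1}{2}(T\circ T)_{de}-T_{db}T_{\ e}^{b}+\tfrac{1}{6}g_{de}(|T|^{2}+(\func{Tr}T)^{2})$, along the family $\tfrac{d}{dt}\psi =\chi $. By Proposition \ref{propflow} the induced variations of $\varphi $ and $g$ are algebraic in $(X,h)$, and by Proposition \ref{PropDtdtgen} (equation (\ref{dttorspsi})), $\dot{T}_{ab}=(\func{curl}h)_{ab}+\nabla _{a}X_{b}-\tfrac{1}{4}(\nabla _{c}\func{Tr}h)\varphi _{\ ab}^{c}+l.o.t.$, the $l.o.t.$ being terms with no derivatives of $\chi $. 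Since $T$ carries one derivative of the $G_{2}$-structure, the only contributions that are genuinely second order in $\chi $ come from differentiating $\func{Tr}T$ in the $\pi _{7}$ part and $\func{curl}T$ in the $\pi _{1\oplus 27}$ part; differentiating the pointwise operations $\ast ,\mathrm{i}_{\varphi },\wedge \varphi $ or any of the torsion-algebraic terms of $k$ produces terms of order at most one in $\chi $, hence $l.o.t.$

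For the $\pi _{7}$ component, $\tfrac{d}{dt}\func{Tr}T=g^{ab}\dot{T}_{ab}+l.o.t.=\func{div}X+l.o.t.$ (using that $\func{curl}h$ is trace-free for symmetric $h$ and that $g^{ab}\varphi _{cab}=0$), so the $\pi _{7}$-part of the linearisation is $d(\func{div}X)\wedge \varphi +l.o.t.$ For the $\pi _{1\oplus 27}$ component one has $\dot{k}_{(de)}=-(\func{curl}\dot{T})_{(de)}+l.o.t.=-(\nabla _{m}\dot{T}_{dn})\varphi _{e}^{\ mn}+l.o.t.$; substituting $\dot{T}$ and reducing the resulting products of two copies of $\varphi $ by the contraction identity (\ref{phiphi1}), the three pieces give, after symmetrising in $d,e$, respectively $\nabla ^{2}h_{de}-\nabla _{(d}(\func{div}h)_{e)}$, $-(\nabla _{a}\nabla _{(d}X_{|b|})\varphi _{\ \ e)}^{ab}$ and $\tfrac{1}{4}\nabla _{(d}\nabla _{e)}\func{Tr}h-\tfrac{1}{4}(\nabla ^{2}\func{Tr}h)g_{de}$, where the Ricci identity is used to commute covariant derivatives (the resulting curvature being expressible through $T$, hence $l.o.t.$) and where the two leftover $\psi $-contracted terms are both $l.o.t.$: the one coming from the $\func{curl}h$ piece because a totally antisymmetrised second covariant derivative is a sum of curvature commutators, and the one coming from the $\func{Tr}h$ piece because $\nabla \nabla \func{Tr}h$ is symmetric whereas it is paired with an antisymmetric index pair of $\psi $. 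Multiplying by $3$ and inserting into $\ast \mathrm{i}_{\varphi }$ gives (\ref{pi127lappsilin}).

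For the final statement, $d\chi =0$ is equivalent to $d^{\ast }\sigma =0$, where $\sigma =\ast \chi =X\lrcorner \psi +3\mathrm{i}_{\varphi }(h)$. Feeding $d^{\ast }\sigma =0$ into Proposition \ref{PropComponents} gives, to leading order, the relation $(\func{curl}X)_{a}=(\func{div}h)_{a}+\tfrac{1}{2}\nabla _{a}\func{Tr}h+l.o.t.$ from (\ref{d3scomp7}) and the relation $(\nabla _{m}X_{n})\psi _{\ \ bc}^{mn}=((\func{div}h)\lrcorner \varphi )_{bc}+2(\func{curl}h)_{[bc]}+l.o.t.$ from (\ref{d3scompall}). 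Differentiating the first once converts the $\nabla \nabla X$ contraction in (\ref{pi127lappsilin}) into $-\nabla _{(d}(\func{div}h)_{e)}-\tfrac{1}{2}\nabla _{(d}\nabla _{e)}\func{Tr}h$, which in particular cancels the $\func{Tr}h$ terms; differentiating the second and contracting it against a further $\varphi $ by (\ref{phiphi1}) supplies the term $-\nabla _{b}\nabla _{m}h_{cn}\varphi _{\ \ (d}^{bc}\varphi _{\ \ \ e)}^{mn}$; combining, the bracket of (\ref{pi127lappsilin}) becomes one half of $\nabla ^{2}h_{de}-\nabla _{(d}\nabla _{|a|}h_{\ e)}^{a}-\nabla _{a}\nabla _{(d}X_{|b|}\varphi _{\ \ e)}^{ab}-\nabla _{b}\nabla _{m}h_{cn}\varphi _{\ \ (d}^{bc}\varphi _{\ \ \ e)}^{mn}$ modulo $l.o.t.$, so that the overall coefficient collapses from $3$ to $\tfrac{3}{2}$ and one obtains (\ref{pi127lappsilin3}). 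The main obstacle is precisely this last step in the $\pi _{1\oplus 27}$ component: keeping track of the several second-order double contractions of $\varphi $ with $\varphi $ and with $\psi $, of which only curvature combinations may be discarded, and pinning down the factor $\tfrac{1}{2}$; the $\pi _{7}$ part and the first-order reductions are routine by comparison.
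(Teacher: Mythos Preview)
Your derivation of the $\pi_{7}$ component and of formula (\ref{pi127lappsilin}) for general $\chi$ is correct and is exactly the paper's approach: linearise (\ref{lappsi}) via the variation (\ref{dttorspsi}) of $T$, retain only the $\func{curl}\dot T$ contribution at top order, and reduce the $\varphi\varphi$ products by (\ref{phiphi1}), discarding the $\psi$-contracted remnants as curvature terms.

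For the closed case the paper takes a different and cleaner route than yours. Rather than substituting the explicit relations (\ref{d3scomp7})--(\ref{d3scompall}) into (\ref{pi127lappsilin}), the paper observes that a closed variation keeps the torsion in $W_{1}\oplus W_{27}$, so $\dot T_{ab}$ is itself symmetric to leading order (indeed $\dot T_{[ab]}=\ast d\chi$). One may therefore replace $\dot T$ by $\dot T_{(ab)}=(\func{curl}h)_{(ab)}+\nabla_{(a}X_{b)}+l.o.t.$, the $\func{Tr}h$ term dropping out by antisymmetry of $\varphi$. Recomputing $-3(\func{curl}\dot T_{(\,\cdot\,)})_{(de)}$ then splits into three pieces: the $(\func{curl}h)_{dn}$ and $\nabla_{d}X_{n}$ halves reproduce one half of the first three terms in (\ref{pi127lappsilin}), while the transposed half $(\func{curl}h)_{nd}$ yields directly the new term $-\tfrac12\nabla_{b}\nabla_{m}h_{cn}\varphi^{bc}_{\ \ (d}\varphi^{mn}_{\ \ e)}$; the $\nabla_{n}X_{d}$ half is $l.o.t.$ since it pairs symmetric second derivatives with antisymmetric $\varphi$. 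The overall $\tfrac{3}{2}$ is thus automatic.

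Your alternative---manipulate (\ref{pi127lappsilin}) using the differentiated closedness relations---can in principle also reach (\ref{pi127lappsilin3}), since those relations are exactly $\dot T_{[ab]}=l.o.t.$ However, your stated steps do not work as written: replacing $-(\nabla_{a}\nabla_{(d}X_{|b|})\varphi^{ab}_{\ \ e)}$ via (\ref{d3scomp7}) gives $-\nabla_{(d}(\func{div}h)_{e)}-\tfrac12\nabla_{(d}\nabla_{e)}\func{Tr}h$, and adding this to the $\tfrac14\nabla_{d}\nabla_{e}\func{Tr}h-\tfrac14(\nabla^{2}\func{Tr}h)g_{de}$ already present leaves $-\tfrac14\nabla_{d}\nabla_{e}\func{Tr}h-\tfrac14(\nabla^{2}\func{Tr}h)g_{de}$, not zero. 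Moreover the target (\ref{pi127lappsilin3}) still contains the $\nabla\nabla X$ term, so it cannot have been ``converted'' away. If you want to stick with your approach you must show directly that $2\times[\text{bracket of (\ref{pi127lappsilin})}]-[\text{bracket of (\ref{pi127lappsilin3})}]$ is $l.o.t.$ under (\ref{d3scomp7})--(\ref{d3scompall}); the paper's symmetrisation argument sidesteps this bookkeeping entirely.
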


\begin{proof}
To find the linearization, we just take the decomposition of $\Delta _{\psi
}\psi $ (\ref{lappsi}) and use the formula (\ref{dttorspsi}) for the
variation of $T_{ab}$. So we have 
\begin{equation}
\dot{T}_{ab}=\frac{1}{4}\left( \func{Tr}h\right) T_{ab}-T_{a}^{\
c}h_{cb}^{{}}-T_{a}^{\ c}X_{{}}^{d}\varphi _{dcb}^{{}}+\left( \func{curl}%
h\right) _{ab}+\nabla _{a}X_{b}-\frac{1}{4}\left( \nabla _{c}\func{Tr}%
h\right) \varphi _{\ ab}^{c}
\end{equation}%
Now,%
\begin{eqnarray*}
\pi _{7}\left( D_{\psi }\Delta _{\psi }\right) \left( \chi \right)
&=&d\left( \func{div}X\right) \wedge \varphi +l.o.t. \\
\pi _{1\oplus 27}\left( D_{\psi }\Delta _{\psi }\right) \left( \chi \right)
&=&3\ast \mathrm{i}_{\varphi }\left( -\nabla _{a}^{{}}\nabla
_{m}^{{}}h_{n(d}^{{}}\varphi _{\ \ \ \left\vert b\right\vert }^{mn}\varphi
_{\ \ e)}^{ab}-\nabla _{a}^{{}}\nabla _{(d}^{{}}X_{\left\vert b\right\vert
}^{{}}\varphi _{\ \ e)}^{ab}\right. \\
&&+\left. \frac{1}{4}\nabla _{c}^{{}}\left( \nabla _{m}^{{}}\func{Tr}%
h\right) \varphi _{\ \ (a\left\vert b\right\vert }^{m}\varphi _{d)}^{\
cb}\right) +l.o.t. \\
&=&3\ast \mathrm{i}_{\varphi }\left( \nabla ^{2}h_{de}-\nabla
_{(d}^{{}}\nabla _{\left\vert a\right\vert }^{{}}h_{\ e)}^{a}-\nabla
_{a}^{{}}\nabla _{(d}^{{}}X_{\left\vert b\right\vert }^{{}}\varphi _{\ \
e)}^{ab}\right. \\
&&+\left. \frac{1}{4}\nabla _{a}\nabla _{d}\func{Tr}h-\frac{1}{4}\left(
\nabla ^{2}\func{Tr}h\right) g_{ad}\right) +l.o.t.
\end{eqnarray*}%
where we have twice used the contraction identity (\ref{phiphi1}) in the
last line.

To get the expression (\ref{pi127lappsilin3}), note that when $\chi $ is
closed, $\dot{T}_{ab}$ is symmetric, since the torsion class remains in $%
W_{1}\oplus W_{27}$. Hence the antisymmetric part of (\ref{dttorspsi})
vanishes and we can write%
\begin{equation}
\dot{T}_{ab}=\frac{1}{4}\left( \func{Tr}h\right) T_{ab}-T_{(a}^{\ \
c}h_{cb)}^{{}}-T_{(a}^{\ \ c}X^{d}\varphi _{b)dc}+\left( \func{curl}h\right)
_{\left( ab\right) }+\nabla _{(a}X_{b)}  \label{dttorssym}
\end{equation}%
It can been seen explicitly from Proposition \ref{PropComponents} that the
antisymmetric part of $\dot{T}$ is equal precisely to $\ast d\chi $, which
is of course zero. Now plugging (\ref{dttorssym}) into (\ref{lappsi}) we
find that $\pi _{7}\left( D_{\psi }\Delta _{\psi }\right) \left( \chi
\right) $ remains the same, and $\pi _{1\oplus 27}\left( D_{\psi }\Delta
_{\psi }\right) \left( \chi \right) $ becomes as in (\ref{pi127lappsilin3}).
\end{proof}

To see that this is not a positive operator, consider the principal symbol $%
\sigma _{\xi }\left( D_{\psi }\Delta _{\psi }\right) \left( \chi \right) $
for some vector $\xi :$%
\begin{eqnarray*}
\pi _{7}\sigma _{\xi }\left( D_{\psi }\Delta _{\psi }\right) \left( \chi
\right) &=&\left\langle X,\xi \right\rangle \xi ^{\flat }\wedge \varphi \\
\pi _{1\oplus 27}\sigma _{\xi }\left( D_{\psi }\Delta _{\psi }\right) \left(
\chi \right) &=&3\ast \mathrm{i}_{\varphi }\left( \left\vert \xi \right\vert
^{2}h_{de}-\xi _{(d}^{{}}\xi _{\left\vert a\right\vert }^{{}}h_{\
e)}^{a}-\xi _{a}^{{}}\xi _{(d}^{{}}X_{\left\vert b\right\vert }^{{}}\varphi
_{\ \ e)}^{ab}+\frac{1}{4}\xi _{a}\xi _{d}\func{Tr}h\right. \\
&&\left. -\frac{1}{4}\left\vert \xi \right\vert ^{2}\left( \func{Tr}h\right)
g_{ad}\right) +l.o.t.
\end{eqnarray*}%
Then, if $h=0$, we have 
\begin{equation*}
\left\langle \sigma _{\xi }\left( D_{\psi }\Delta _{\psi }\right) \left(
\chi \right) ,\chi \right\rangle =-4\left\langle X,\xi \right\rangle
^{2}\leq 0.
\end{equation*}

We may attempt to use DeTurck's trick to modify this operator by adding a
certain Lie derivative $\mathcal{L}_{V\left( \chi \right) }\psi $ to the
flow (\ref{lapcoflow1}), where $V\left( \chi \right) $ is linear in the
first derivatives of $\chi $, so that the modified flow of $\psi $ is now
given by 
\begin{equation}
\frac{d\psi }{dt}=\Delta _{\psi }\psi +\mathcal{L}_{V\left( \chi \right)
}\psi  \label{coflowmod1}
\end{equation}%
For convenience, denote 
\begin{equation}
Q_{\psi }\psi =\Delta _{\psi }\psi +\mathcal{L}_{V\left( \chi \right) }\psi .
\label{Qpsi}
\end{equation}%
Since $\psi $ is closed, we have 
\begin{equation*}
\mathcal{L}_{V\left( \chi \right) }\psi =d\left( V\left( \chi \right)
\lrcorner \psi \right) .
\end{equation*}%
Consider first the type decomposition of $d\left( V\left( \chi \right)
\lrcorner \psi \right) $. From Proposition \ref{PropComponents}, we have 
%TCIMACRO{\TeXButton{TeX field}{\begin{subequations}}}%
%BeginExpansion
\begin{subequations}%
%EndExpansion
\label{dvpsidecomp}

\begin{eqnarray}
\pi _{7}d\left( V\lrcorner \psi \right) &=&-\frac{1}{2}\left( \func{curl}%
V\right) \wedge \varphi +\text{$l.o.t.$} \\
\pi _{1\oplus 27}d\left( V\lrcorner \psi \right) &=&3\ast \mathrm{i}%
_{\varphi }\left( -\nabla _{(m}V_{n)}+\frac{1}{3}\left( \func{div}V\right)
g_{mn}\right) +\text{$l.o.t.$}
\end{eqnarray}%
%TCIMACRO{\TeXButton{TeX field}{\end{subequations}}}%
%BeginExpansion
\end{subequations}%
%EndExpansion
We are only interested in linearizing (\ref{Qpsi}) in the direction of
closed $4$-forms, so suppose $\chi =\ast \left( X\lrcorner \psi +3\mathrm{i}%
_{\varphi }\left( h\right) \right) $ is closed, and hence $\ast \chi
=X\lrcorner \psi +3\mathrm{i}_{\varphi }\left( h\right) $ is co-closed. This
requirement gives a number of conditions on $X$ and $h$. From Proposition %
\ref{PropComponents} we then get the following 
%TCIMACRO{\TeXButton{TeX field}{\begin{subequations}}}%
%BeginExpansion
\begin{subequations}%
%EndExpansion
\label{closed4formrels1} 
\begin{eqnarray}
-\left( \left( \func{div}h\right) \lrcorner \varphi \right) _{bc}-2\left( 
\func{curl}h\right) _{\left[ bc\right] }+\nabla _{m}^{{}}X_{n}^{{}}\psi _{\
\ \ bc}^{mn} &=&\text{$l.o.t.$}  \label{pi14cond} \\
\left( \func{curl}X\right) _{a}-\left( \func{div}h\right) _{a}-\frac{1}{2}%
\nabla _{a}^{{}}\func{Tr}h &=&\text{$l.o.t.$}  \label{pi7cond}
\end{eqnarray}%
%TCIMACRO{\TeXButton{TeX field}{\end{subequations}} }%
%BeginExpansion
\end{subequations}
%EndExpansion
The lower order terms in (\ref{closed4formrels1}) are linear in $X$ and $h$
and depend only on the torsion. By differentiating (\ref{closed4formrels}),
we get the following relations:%
%TCIMACRO{\TeXButton{TeX field}{\begin{subequations}}}%
%BeginExpansion
\begin{subequations}%
%EndExpansion
\label{closed4formrels} 
\begin{eqnarray}
\func{div}\left( \func{div}h\right) +\frac{1}{2}\nabla ^{2}\func{Tr}h &=&%
\text{$l.o.t.$}  \label{pi1cond} \\
\nabla _{(a}\left( \func{div}h\right) _{b)}+\frac{1}{2}\nabla _{a}\nabla _{b}%
\func{Tr}h-\left( \nabla _{m}^{{}}\nabla _{(a}^{{}}X_{\left\vert
n\right\vert }^{{}}\right) \varphi _{\ \ \ b)}^{mn} &=&\text{$l.o.t.$}
\label{pi27cond} \\
\nabla _{a}\left( \func{div}X\right) -\nabla ^{2}X_{a}-\func{curl}\left( 
\func{div}h\right) _{a} &=&\text{$l.o.t.$}  \label{pi7cond2}
\end{eqnarray}%
%TCIMACRO{\TeXButton{TeX field}{\end{subequations}} }%
%BeginExpansion
\end{subequations}
%EndExpansion
Here, equation (\ref{pi1cond}) is obtained by taking the divergence of (\ref%
{pi7cond}). Equation (\ref{pi27cond}) is obtained by taking the curl of (\ref%
{pi14cond}), and then symmetrizing. Similarly, equation (\ref{pi7cond2}) is
obtained by taking the curl of (\ref{pi7cond}). In both cases the identities
(\ref{contids}) are used to simplify contractions of $\varphi $ and $\psi $%
.\ 

The only possible vector fields that are linear in covariant derivatives of $%
X$ and $h$ are 
\begin{eqnarray}
Y &=&\nabla \func{Tr}h \\
W &=&\func{div}h \\
Z &=&\func{curl}X
\end{eqnarray}%
However from (\ref{pi7cond}) we see that these vectors are linearly
dependent up to lower order terms, so it is sufficient to take a linear
combination of any two of them. For convenience we will choose $Y$ and $Z$.
First consider $d\left( Y\lrcorner \psi \right) $:%
%TCIMACRO{\TeXButton{TeX field}{\begin{subequations}} }%
%BeginExpansion
\begin{subequations}
%EndExpansion
\begin{eqnarray}
\pi _{7}d\left( Y\lrcorner \psi \right) &=&\left( -\frac{1}{2}\nabla
_{m}^{{}}\nabla _{n}^{{}}\left( \func{Tr}h\right) \varphi _{\ \ \
a}^{mn}\right) \wedge \varphi +\text{$l.o.t.$}=\text{$l.o.t.$} \\
\pi _{1\oplus 27}d\left( Y\lrcorner \psi \right) &=&3\ast \mathrm{i}%
_{\varphi }\left( -\nabla _{m}\nabla _{n}\func{Tr}h+\frac{1}{3}\nabla
^{2}\left( \func{Tr}h\right) g_{mn}\right) +\text{$l.o.t.$}
\end{eqnarray}%
%TCIMACRO{\TeXButton{TeX field}{\end{subequations}}}%
%BeginExpansion
\end{subequations}%
%EndExpansion
Similarly, the decomposition of $d\left( Z\lrcorner \psi \right) $ is 
%TCIMACRO{\TeXButton{TeX field}{\begin{subequations}}}%
%BeginExpansion
\begin{subequations}%
%EndExpansion
\begin{eqnarray}
\pi _{7}d\left( Z\lrcorner \psi \right) &=&\left( -\frac{1}{2}\nabla
_{m}^{{}}\nabla _{c}^{{}}X_{d}^{{}}\varphi _{\ \ n}^{cd}\varphi _{\ \ \
a}^{mn}\right) \wedge \varphi +\text{$l.o.t.$}  \notag \\
&=&\frac{1}{2}\left( \nabla ^{2}X_{a}-\nabla _{a}\left( \func{div}X\right)
\right) +\text{$l.o.t.$} \\
\pi _{1\oplus 27}d\left( Z\lrcorner \psi \right) &=&3\ast \mathrm{i}%
_{\varphi }\left( -\nabla _{m}^{{}}\nabla _{(a}^{{}}X_{n}^{{}}\varphi _{\ \
\ d)}^{mn}\right) +\text{$l.o.t.$}
\end{eqnarray}%
%TCIMACRO{\TeXButton{TeX field}{\end{subequations}}}%
%BeginExpansion
\end{subequations}%
%EndExpansion
Again, we have used the contraction identity (\ref{phiphi1}) to get to the
second line.

Hence, if we take 
\begin{equation}
V=\frac{3}{4}Y-2Z
\end{equation}%
and apply (\ref{pi27cond}), we find that the linearization of $Q_{\psi }$ is
now given 
%TCIMACRO{\TeXButton{TeX field}{\begin{subequations}}}%
%BeginExpansion
\begin{subequations}%
%EndExpansion
\label{coflowlinmod1} 
\begin{eqnarray}
\pi _{7}\left( D_{\psi }Q_{\psi }\right) \left( \chi \right) &=&\left(
-\nabla ^{2}X+2d\left( \func{div}X\right) \right) \wedge \varphi +l.o.t.
\label{pi7linmodcoflow1} \\
\pi _{1\oplus 27}\left( D_{\psi }Q_{\psi }\right) \left( \chi \right)
&=&3\ast \mathrm{i}_{\varphi }\left( \nabla ^{2}h_{ad}\right) +l.o.t.
\label{pi127linmodcoflow1}
\end{eqnarray}%
%TCIMACRO{\TeXButton{TeX field}{\end{subequations}} }%
%BeginExpansion
\end{subequations}
%EndExpansion
For some vector $\xi $, consider the principal symbol $\sigma _{\xi }\left(
D_{\psi }Q_{\psi }\right) \chi ,$ then 
\begin{eqnarray}
\left\langle \sigma _{\xi }\left( D_{\psi }Q_{\psi }\right) \chi ,\chi
\right\rangle &=&\left\langle \left( -\left\vert \xi \right\vert ^{2}X+2\xi
\left\langle \xi ,X\right\rangle \right) \wedge \varphi ,-X\wedge \varphi
\right\rangle  \notag \\
&&+9\left\langle \mathrm{i}_{\varphi }\left( \left\vert \xi \right\vert
^{2}h_{ad}\right) ,\mathrm{i}_{\varphi }\left( h_{ad}\right) \right\rangle 
\notag \\
&=&4\left( \left\vert \xi \right\vert ^{2}\left\vert X\right\vert
^{2}-2\left\langle \xi ,X\right\rangle ^{2}\right) +2\left\vert \xi
\right\vert ^{2}\left\vert h\right\vert ^{2}  \label{princsymbQ}
\end{eqnarray}%
Hence we see that the principal symbol is still indefinite, and so (\ref%
{coflowmod1}) is not parabolic. Note that adding more instances of $\mathcal{%
L}_{Z}\psi $ would not help. To see this, let $\sigma _{\xi }\left( d\left(
Z\lrcorner \psi \right) \right) $ be the principal symbol of $d\left(
Z\lrcorner \psi \right) $ for some vector $\xi $. Using (\ref{pi7cond2}), we
get 
\begin{equation}
\left\langle \sigma _{\xi }\left( d\left( Z\lrcorner \psi \right) \right)
,\chi \right\rangle =-2\xi _{m}^{{}}\xi _{{}}^{n}h_{np}^{{}}\varphi _{\ \ \
a}^{mp}X_{{}}^{a}-2\xi _{m}^{{}}\xi _{a}^{{}}X_{n}^{{}}\varphi _{\ \ \
d}^{mn}h_{{}}^{ad}=0
\end{equation}%
Therefore, $\mathcal{L}_{Z}\psi $ would not contribute in any way to (\ref%
{princsymbQ}). However, overall, we have shown that we can rewrite the
linearized operator $\Delta _{\psi }\psi $ in the following way.

\begin{proposition}
\label{proplincoflow}The linearization of the operator $\Delta _{\psi }\psi $
evaluated at a closed $4$-form $\chi $ given by 
\begin{equation*}
\chi =\ast \left( X\lrcorner \psi +3\mathrm{i}_{\varphi }\left( h\right)
\right)
\end{equation*}%
is given by 
\begin{equation}
D_{\psi }\Delta _{\psi }\left( \chi \right) =-\Delta _{\psi }\chi -\mathcal{L%
}_{V\left( \chi \right) }\psi +2d\left( \left( \func{div}X\right) \varphi
\right) +dF\left( \chi \right)  \label{coflowlin}
\end{equation}%
where 
\begin{equation}
V\left( \chi \right) =\frac{3}{4}\nabla \func{Tr}h-2\func{curl}X
\label{coflowvect}
\end{equation}%
and $F\left( \chi \right) $ is a $3$-form-valued algebraic function of $\chi 
$.
\end{proposition}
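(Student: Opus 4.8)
The plan is to prove this by essentially the same route used above to obtain the closed $G_{2}$ formula (\ref{dlaplin}), since (\ref{coflowlin}) is its co-closed, $4$-form counterpart. First I would isolate the leading (second order in $\chi$) part of $D_{\psi}\Delta_{\psi}(\chi)$ from the linearization of $\Delta_{\psi}$ already computed above, namely $\pi_{7}\left(D_{\psi}\Delta_{\psi}\right)(\chi)=d\left(\func{div}X\right)\wedge\varphi+\text{$l.o.t.$}$ together with the $\pi_{1\oplus27}$ expression (\ref{pi127lappsilin3}) valid for closed $\chi$. These I would match against two auxiliary expansions: a rough-Laplacian expansion of $\Delta_{\psi}\chi$, and the type decompositions of $d\left(Y\lrcorner\psi\right)$ and $d\left(Z\lrcorner\psi\right)$ for $Y=\nabla\func{Tr}h$ and $Z=\func{curl}X$ that were computed immediately before the statement.

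For $\Delta_{\psi}\chi$ I would use that $\ast$ commutes with the Hodge Laplacian, so that $\Delta_{\psi}\chi=\ast\Delta_{\psi}\left(X\lrcorner\psi+3\mathrm{i}_{\varphi}(h)\right)$, and that $\nabla^{2}$ differs from $-\Delta_{\psi}$ only by curvature, which for a $G_{2}$-structure is algebraic in the torsion; this gives $\Delta_{\psi}\chi=\left(\nabla^{2}X\right)\wedge\varphi-3\ast\mathrm{i}_{\varphi}\left(\nabla^{2}h\right)+\text{$l.o.t.$}$ Taking $V(\chi)=\tfrac{3}{4}Y-2Z$ and using the relations (\ref{closed4formrels1}) and (\ref{closed4formrels}) that follow from $\chi$ being closed, I would then run exactly the combination that produced (\ref{coflowlinmod1}): the $\pi_{1\oplus27}$ part of $D_{\psi}\Delta_{\psi}(\chi)+\mathcal{L}_{V(\chi)}\psi$ collapses to $3\ast\mathrm{i}_{\varphi}\left(\nabla^{2}h\right)+\text{$l.o.t.$}$ and its $\pi_{7}$ part to $\left(-\nabla^{2}X+2d\left(\func{div}X\right)\right)\wedge\varphi+\text{$l.o.t.$}$ Adding $\Delta_{\psi}\chi$ then cancels the second-order $\nabla^{2}h$ and $\nabla^{2}X$ terms in the two components, leaving $D_{\psi}\Delta_{\psi}(\chi)+\Delta_{\psi}\chi+\mathcal{L}_{V(\chi)}\psi=2d\left(\func{div}X\right)\wedge\varphi+\text{$l.o.t.$}$, which I would rewrite as $2d\left(\left(\func{div}X\right)\varphi\right)+\text{$l.o.t.$}$ using that $d\varphi$ is algebraic in the torsion.

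It then remains to absorb the lower order terms into $dF(\chi)$. Since $\chi$ is closed, each of $D_{\psi}\Delta_{\psi}(\chi)$ (the linearization of $\Delta_{\psi}\psi=dd^{\ast}\psi$), $\Delta_{\psi}\chi=dd^{\ast}\chi$, $\mathcal{L}_{V(\chi)}\psi=d\left(V(\chi)\lrcorner\psi\right)$ and $2d\left(\left(\func{div}X\right)\varphi\right)$ is exact, so the lower order remainder in the identity just obtained is an exact $4$-form. Tracking these terms through the computation as in the closed case (cf. (\ref{dlaplin}) and \cite{BryantXu,XuYe}) --- each arising either directly as an exterior derivative of an expression algebraic in $\chi$, or, after the Ricci identity is applied to reorder covariant derivatives, as a curvature term that reassembles into one --- I would conclude that the remainder equals $dF(\chi)$ for a $3$-form-valued algebraic function $F$ of $\chi$, which yields (\ref{coflowlin}) with $V(\chi)$ as in (\ref{coflowvect}).

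The step I expect to be the main obstacle is this last bookkeeping of lower order terms, i.e. checking that the exact remainder really is the exterior derivative of a function that is algebraic (zeroth order) in $\chi$ and not merely first order; this is as delicate as, though no worse than, the corresponding step in \cite{BryantXu}. The one genuinely new feature over the closed $G_{2}$ case is the surviving term $2d\left(\left(\func{div}X\right)\varphi\right)$: for a co-closed structure $\pi_{7}\Delta_{\psi}\psi=-\func{div}T=-\nabla\func{Tr}T$ by Lemma \ref{LemSymTConds}, so this part of $\Delta_{\psi}\psi$ is only first order in the torsion and linearizes to $d\left(\func{div}X\right)\wedge\varphi$ rather than to a second-order term that could be folded into $-\Delta_{\psi}\chi$ --- which is precisely the failure of strict parabolicity already recorded in (\ref{princsymbQ}).
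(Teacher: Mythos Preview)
Your proposal is correct and follows essentially the same route as the paper's own proof: invoke the already-computed linearization (\ref{coflowlinmod1}) of $Q_{\psi}=\Delta_{\psi}+\mathcal{L}_{V}$, identify the rough-Laplacian pieces with $-\Delta_{\psi}\chi$ via (\ref{lapchi}), rewrite $2d(\func{div}X)\wedge\varphi$ as $2d((\func{div}X)\varphi)$ modulo torsion terms, and then argue that the remainder is exact (since every summand is) and algebraic in $\chi$. Your discussion of the algebraicity of $F$ is in fact slightly more detailed than the paper's, which simply asserts it.
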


\begin{proof}
From (\ref{coflowlinmod1}), and using (\ref{lapchi}) we have 
\begin{equation*}
D_{\psi }\Delta _{\psi }\left( \chi \right) =-\Delta _{\psi }\chi -\mathcal{L%
}_{V\left( \chi \right) }\psi +2d\left( \func{div}X\right) \wedge \varphi
+l.o.t.
\end{equation*}%
However we can write 
\begin{equation*}
\left( \nabla ^{2}X\right) \wedge \varphi =d\left( \left( \func{div}X\right)
\varphi \right) +l.o.t.
\end{equation*}%
and whence, 
\begin{equation}
D_{\psi }\Delta _{\psi }\left( \chi \right) =-\Delta _{\psi }\chi -\mathcal{L%
}_{V_{\chi }}\psi +2d\left( \left( \func{div}X\right) \varphi \right) +l.o.t.
\label{coflowlin2}
\end{equation}%
However, for a closed $\psi $, $\Delta _{\psi }\psi $ is exact, and now all
the higher order terms are exact, so the lower order terms must be an exact $%
4$-form, and can thus be represented as $dF\left( \chi \right) $ where $F$
is a $3$-form which depends algebraically on $\chi $.
\end{proof}

In both the closed and co-closed case the vector field $V_{\chi }$ has been
derived by considering linearizations of $\Delta _{\varphi }\varphi $ and $%
\Delta _{\psi }\psi $, respectively. It turns out that these vector fields,
as well as the vector field used in DeTurck's trick for Ricci flow, actually
have precisely the same origin, even though this is not clear from the above
situation, since the specifics of each case are a little different. In
general, following DeTurck's original idea \cite{DeTurckTrick}, suppose we
fix an arbitrary time-independent background metric $\bar{g}$. Given a
geometric flow, this could for example be taken to be the metric at time $%
t=0 $. With respect to this background metric, we can write down a
background Levi-Civita connection $\bar{\nabla}$, using which we get a
background Riemann curvature, and in particular, a background Ricci
curvature $\overline{Ric}$. Now given an evolving metric $g$, we can express
all the quantities related to $g$ in terms of the background quantities. Let 
\begin{equation*}
\bar{h}=g-\bar{g}.
\end{equation*}%
Then, it is easy to show that the difference between the corresponding
Christoffel symbols $\Gamma $ and $\bar{\Gamma}$ is given by 
\begin{equation}
\mathcal{T}_{a\ c}^{\ b}:=\Gamma _{a\ c}^{\ b}-\bar{\Gamma}_{a\ c}^{\ b}=%
\frac{1}{2}g^{bd}\left( \bar{\nabla}_{a}\bar{h}_{cd}+\bar{\nabla}_{c}\bar{h}%
_{ad}-\bar{\nabla}_{d}\bar{h}_{ac}\right)  \label{gammabar}
\end{equation}%
From this, it is possible to obtain an expression for $Ric$ (the details can
be found in \cite{CaoZhuRicci,MorganTianRicci}, for example) in terms of the
background quantities and difference $\bar{h}$%
\begin{equation}
Ric=\overline{Ric}-\frac{1}{2}\bar{\nabla}^{2}h+\frac{1}{2}\mathcal{L}_{\bar{%
V}}\bar{g}+O\left( \left\vert h\right\vert ^{2}\right)  \label{ricbar}
\end{equation}%
where the vector $\bar{V}$ is given by 
\begin{equation}
\bar{V}^{b}=g^{ac}\mathcal{T}_{a\ c}^{\ b}  \label{vbar}
\end{equation}%
In Ricci flow applications, the expression (\ref{ricbar}) is then used to
relate the Ricci flow to the strictly parabolic Ricci-DeTurck flow, and
hence show short-time existence.

Going back to our $4$-form flow (\ref{lapcoflow1}), suppose the background
metric is now $\bar{g}=g_{0}$ - the metric associated to the initial $G_{2}$%
-structure $\psi _{0}$. We also have 
\begin{equation*}
\psi -\psi _{0}=\chi
\end{equation*}%
where $\chi $ is some closed $4$-form given by $\chi =\ast \left( X\lrcorner
\psi +3\mathrm{i}_{\varphi }\left( h\right) \right) $, as before. From
Proposition \ref{propflow}, 
\begin{equation*}
g=\bar{g}+\left( \frac{1}{2}\left( \overline{\func{Tr}}h\right) \bar{g}%
-2h\right) +O\left( \left\vert \chi \right\vert ^{2}\right)
\end{equation*}%
where $\overline{\func{Tr}}h=\bar{g}^{ab}h_{ab}$. Thus from (\ref{gammabar}%
), 
\begin{eqnarray*}
\mathcal{T}_{a\ c}^{\ b} &=&\frac{1}{2}\bar{g}^{bd}\bar{\nabla}_{a}\left( 
\frac{1}{2}\left( \overline{\func{Tr}}h\right) \bar{g}_{cd}-2h_{cd}\right) +%
\frac{1}{2}\bar{g}^{bd}\bar{\nabla}_{c}\left( \frac{1}{2}\left( \overline{%
\func{Tr}}h\right) \bar{g}_{ad}-2h_{ad}\right) \\
&&-\frac{1}{2}\bar{g}^{bd}\bar{\nabla}_{d}\left( \frac{1}{2}\left( \overline{%
\func{Tr}}h\right) \bar{g}_{ac}-2h_{ac}\right) +O\left( \left\vert \chi
\right\vert ^{2}\right) \\
&=&-\bar{g}^{bd}\left( \bar{\nabla}_{a}h_{cd}+\bar{\nabla}_{c}h_{ad}-\bar{%
\nabla}_{d}h_{ac}\right) \\
&&+\frac{1}{4}\left( \left( \bar{\nabla}_{a}\overline{\func{Tr}}h\right)
\delta _{\ c}^{b}+\left( \bar{\nabla}_{c}\overline{\func{Tr}}h\right) \delta
_{\ a}^{b}-\left( \bar{\nabla}^{d}\overline{\func{Tr}}h\right) \bar{g}%
_{ac}\right) +O\left( \left\vert \chi \right\vert ^{2}\right)
\end{eqnarray*}%
Therefore, in this case the vector $\bar{V}$ becomes 
\begin{equation}
\bar{V}^{b}=-2\bar{\nabla}_{c}h_{d}^{\ c}-\frac{1}{4}\bar{\nabla}^{b}%
\overline{\func{Tr}}h+O\left( \left\vert \chi \right\vert ^{2}\right)
\label{vbar2}
\end{equation}%
where indices are raised using the background inverse metric $\bar{g}^{-1}$.
Note however that $\chi $ is closed, so it satisfies conditions (\ref%
{closed4formrels1}). Using (\ref{pi7cond}), we can re-write (\ref{vbar2}) as 
\begin{eqnarray}
\bar{V}^{b} &=&\frac{3}{4}\bar{\nabla}^{b}\overline{\func{Tr}}h-2\left( \bar{%
\nabla}_{a}X_{c}\right) \bar{\varphi}^{acb}+U^{b}\left( h,X\right) +O\left(
\left\vert \chi \right\vert ^{2}\right)  \notag \\
&=&V\left( \chi \right) ^{b}+U^{b}\left( h,X\right) +O\left( \left\vert \chi
\right\vert ^{2}\right)  \label{vbar3}
\end{eqnarray}%
where $U$ is a vector-valued function linear in $h$ and $X$ and which
depends only on the torsion of $\bar{\varphi}$. Thus we see that if we are
interested in linearization, and only at leading terms, we find that $\bar{V}
$ is precisely equal to our $V\left( \chi \right) $. A similar argument
gives the same result for closed $G_{2}$-structures as well \cite{BryantXu}.
This is not surprising, since to leading order, the evolution of the metric
is given by the Ricci flow in both cases.

\section{Modified flow}

\label{SecModFlow}\setcounter{equation}{0}In the previous section we have
seen that the original Laplacian flow of $\psi $ (\ref{orig-coflow}) is not
parabolic, and moreover, unlike the Laplacian flow of $\varphi $ (\ref%
{orig-flow}), it is not even weakly parabolic - as we have seen, the
principal symbol of $\Delta _{\psi }\psi $ is indefinite. Hence, unlike the
situation with the Ricci flow or the Laplacian flow of $\varphi $, we cannot
modify this flow to be parabolic just by adding a Lie derivative along a
vector $V$. In fact, in our case, we have seen that we choose $V$ such that
the $\Lambda _{1}^{4}\oplus \Lambda _{27}^{4}$ component becomes parabolic.
However it turns out that this addition cannot fix the highest order terms
of the $\Lambda _{7}^{4}$ component of the flow. The main motivation for
considering the Laplacian flow of $\psi $ is that it gives a flow of
co-closed $G_{2}$-structures with the volume functional increasing
monotonically along the flow, which suggests that if a long-time solution to
this flow were to exist, it would converge towards a torsion-free $G_{2}$%
-stucture. So let us modify the flow (\ref{orig-flow}) such that it becomes
weakly parabolic\ (before applying the DeTurck trick). However, it is
important that the new flow stays within the class of co-closed $G_{2}$%
-structures and that the volume functional increases along the flow.

\begin{theorem}
\label{thmmodcoflow}For a co-closed $G_{2}$-structure defined by the closed $%
4$-form $\psi $, consider the flow%
\begin{equation}
\frac{d\psi }{dt}=\Delta _{\psi }\psi +2d\left( \left( A-\func{Tr}T\right)
\varphi \right) .  \label{coflowmod2}
\end{equation}%
where $A$ is a fixed constant. Then, this is a weakly parabolic flow in the
direction of closed forms, and its linearization at a closed form $\chi $ is
given by 
\begin{equation*}
\frac{d\chi }{dt}=-\Delta _{\psi }\chi -\mathcal{L}_{V\left( \chi \right)
}\psi +d\hat{F}\left( \chi \right)
\end{equation*}%
where $V\left( \chi \right) $ is as in Proposition \ref{proplincoflow} and $%
\hat{F}\left( \chi \right) $ is a $3$-form-valued function that is algebraic
in $\chi $.

Moreover, the evolution of the volume functional (\ref{volfuncpsi}) is given
by%
\begin{equation}
\frac{dV}{dt}=\frac{1}{2}\int_{M}\left( \left\vert T\right\vert ^{2}+\func{Tr%
}T\left( 4A-3\func{Tr}T\right) \right) \mathrm{vol}.  \label{coflowmodvoldt}
\end{equation}
\end{theorem}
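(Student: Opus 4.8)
The plan is to dispatch the three assertions — the linearization formula, weak parabolicity, and the volume identity — in that order, reusing the machinery already set up. For the linearization I would start from Proposition \ref{proplincoflow}, which gives $D_\psi\Delta_\psi(\chi)=-\Delta_\psi\chi-\mathcal{L}_{V(\chi)}\psi+2d\bigl((\func{div}X)\varphi\bigr)+dF(\chi)$ for a closed $4$-form $\chi=\ast(X\lrcorner\psi+3\mathrm{i}_\varphi(h))$, and compute the linearization of the extra term $2d\bigl((A-\func{Tr}T)\varphi\bigr)$. Since $A$ is constant, $D_\psi\bigl[(A-\func{Tr}T)\varphi\bigr](\chi)=A\dot\varphi-\dot{(\func{Tr}T)}\,\varphi-(\func{Tr}T)\dot\varphi$, where $\dot\varphi$ is algebraic in $\chi$ by Proposition \ref{propflow}; for $\dot{(\func{Tr}T)}=\dot g^{ab}T_{ab}+g^{ab}\dot T_{ab}$ I would substitute Proposition \ref{PropDtdtgen} together with $(\ref{dtginv})$ and observe that the term $T_a^{\ c}X^d\varphi_{dcb}$ is killed upon tracing by antisymmetry of $\varphi$, that $\func{curl}h$ is traceless for symmetric $h$, and that $g^{ab}\varphi^c_{\ ab}=0$, leaving $\dot{(\func{Tr}T)}=\func{div}X+(\text{terms algebraic in }\chi)$. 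Hence $D_\psi\bigl[2d((A-\func{Tr}T)\varphi)\bigr](\chi)=-2d\bigl((\func{div}X)\varphi\bigr)+d(\text{algebraic in }\chi)$, so adding the two linearizations cancels the $2d((\func{div}X)\varphi)$ terms and yields $-\Delta_\psi\chi-\mathcal{L}_{V(\chi)}\psi+d\hat F(\chi)$ with $\hat F$ algebraic in $\chi$. (On the way I would note that $d$ annihilates the right-hand side of $(\ref{coflowmod2})$ when $d\psi=0$ and that this right-hand side is exact, so the flow stays among co-closed structures within $[\psi]$.)

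For weak parabolicity I would read off the principal symbol of the leading operator $L\chi=-\Delta_\psi\chi-\mathcal{L}_{V(\chi)}\psi$ with $V(\chi)=\tfrac34\nabla\func{Tr}h-2\func{curl}X$. The first summand contributes $-|\xi|^2$ times the identity; the second is again second order, and I would evaluate $\sigma_\xi\bigl(\chi\mapsto d(V(\chi)\lrcorner\psi)\bigr)$ from the type decomposition $(\ref{dvpsidecomp})$ after inserting $\sigma_\xi(V(\chi))=\tfrac34(\func{Tr}h)\xi-2\sigma_\xi(\func{curl}X)$ and reducing the $\varphi$–$\psi$ contractions with $(\ref{contids})$. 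The structural point is that $2d((A-\func{Tr}T)\varphi)$ was designed precisely to reverse the sign of the $\Lambda^4_7$ part of the flow — compare $\Delta_\psi\psi=d(\func{Tr}T)\wedge\varphi+3\ast\mathrm{i}_\varphi(\cdots)$ with the modified flow, whose $\Lambda^4_7$ part is $-d(\func{Tr}T)\wedge\varphi+\text{l.o.t.}$ — this being exactly the component whose symbol was indefinite for the bare coflow $(\ref{lapcoflow1})$ and for the failed modification $(\ref{Qpsi})$. Splitting $\langle\sigma_\xi(L)\chi,\chi\rangle$ into its $\pi_7$ and $\pi_{1\oplus27}$ pieces (the $\pi_7$ piece being the counterpart, with the corrected sign, of the $h=0$ computation recorded just before Proposition \ref{proplincoflow}, and using the closedness constraint $(\ref{pi7cond})$ on $X$ and $h$), one checks $\langle\sigma_\xi(L)\chi,\chi\rangle\le 0$ for every $\xi\neq0$ and every such $\chi$, with equality exactly along the infinitesimal diffeomorphisms $\mathcal{L}_Y\psi=d(Y\lrcorner\psi)$; that is weak parabolicity in the direction of closed forms.

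For the volume evolution I would use the first-variation identity $\tfrac{dV}{dt}=\tfrac14\int_M\varphi\wedge\dot\psi$ established in the proof of Proposition \ref{propvolextr} and split according to the two terms of $(\ref{coflowmod2})$. For the first, $\Delta_\psi\psi=\ast\Delta_\varphi\varphi$ and only the $\Lambda^3_1$ component of $\Delta_\varphi\varphi$ pairs with $\varphi$, so by $(\ref{lapdecomsym})$ (equivalently by $(\ref{cocloseddetgdt})$) $\tfrac14\int_M\varphi\wedge\Delta_\psi\psi=\tfrac12\int_M(|T|^2+(\func{Tr}T)^2)\,\mathrm{vol}$. For the second, $\varphi\wedge\varphi=0$ because $\varphi$ has odd degree, hence $\varphi\wedge d\bigl((A-\func{Tr}T)\varphi\bigr)=(A-\func{Tr}T)\,\varphi\wedge d\varphi$; and for a co-closed structure $d\varphi=4\tau_1\psi-3\ast\mathrm{i}_\varphi(\tau_{27})$ with $\tau_1=\tfrac17\func{Tr}T$, so $\varphi\wedge d\varphi=28\tau_1\,\mathrm{vol}=4(\func{Tr}T)\,\mathrm{vol}$ (the $\Lambda^4_{27}$ part dropping against $\varphi$), giving $\tfrac14\int_M 2\,\varphi\wedge d((A-\func{Tr}T)\varphi)=2\int_M(A-\func{Tr}T)(\func{Tr}T)\,\mathrm{vol}$. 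Adding the two contributions produces $\tfrac{dV}{dt}=\tfrac12\int_M\bigl(|T|^2+\func{Tr}T(4A-3\func{Tr}T)\bigr)\mathrm{vol}$.

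The part I expect to be the real work is the symbol computation in the parabolicity step: carefully evaluating $\sigma_\xi\bigl(d(V(\chi)\lrcorner\psi)\bigr)$, pairing it against $\chi$ through the $G_2$ contraction identities while keeping all sign conventions consistent, and confirming both the non-positivity and that the degeneracy is exactly the image of $Y\mapsto\mathcal{L}_Y\psi$. By contrast, the linearization bookkeeping (once the cancellation of the $2d((\func{div}X)\varphi)$ terms is spotted) and the volume computation are essentially routine applications of identities already assembled in the paper.
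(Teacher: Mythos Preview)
Your linearization argument is essentially identical to the paper's: both invoke Proposition~\ref{proplincoflow} and observe that linearizing $2d((A-\func{Tr}T)\varphi)$ produces $-2d((\func{div}X)\varphi)$ plus an exact algebraic piece (since $D_\psi(\func{Tr}T)(\chi)=\func{div}X+\text{l.o.t.}$), cancelling the offending term and leaving $\hat F=F+2A\dot\varphi$. Your volume computation is also correct but takes a different route: the paper works pointwise via $\tfrac{d}{dt}\sqrt{\det g}=\tfrac34(\func{Tr}h)\sqrt{\det g}$, combining \eqref{cocloseddetgdt} with the value of $\func{Tr}h$ read off from \eqref{coflow2add}; you use the global first-variation $\delta V=\tfrac14\int\varphi\wedge\dot\psi$ together with $\varphi\wedge\varphi=0$ and $\varphi\wedge d\varphi=4(\func{Tr}T)\,\mathrm{vol}$. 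Both arrive at \eqref{coflowmodvoldt}, and your argument is arguably the cleaner of the two.

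The weak-parabolicity step is where your plan genuinely diverges from the paper. You propose to evaluate the symbol of $-\Delta_\psi\chi-\mathcal{L}_{V(\chi)}\psi$ by handling the Lie-derivative piece separately through \eqref{dvpsidecomp}; the paper instead returns to the explicit $\pi_7$ and $\pi_{1\oplus27}$ linearization formulas --- the closed-form identity \eqref{pi127lappsilin3} together with relation \eqref{pi7cond2} --- reads off the symbol of $D_\psi\hat Q_\psi$ directly, and shows the pairing equals the manifestly nonnegative quantity $\tfrac{7}{4}\bigl(|\xi|^2|X|^2+(\xi\cdot X)^2\bigr)+2|\omega|^2$ with $\omega_{ab}=\xi_m\varphi^{mn}_{\ \ (a}h_{b)n}+\tfrac12\xi_{(a}X_{b)}$. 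Your plan should in principle give the same answer, but the paper's route is more direct: it bypasses any separate analysis of $\sigma_\xi(\mathcal{L}_{V(\cdot)}\psi)$ and delivers the sum-of-squares form immediately. Two minor cautions: your symbol convention (with $-\Delta_\psi$ contributing $-|\xi|^2$ and parabolicity meaning $\le 0$) is opposite to the one the paper uses, so keep signs consistent; and your claim that the degeneracy is \emph{exactly} the diffeomorphism directions is stronger than what the theorem requires and is not what the paper records --- it simply exhibits a null direction in $\Lambda_1^4$ to conclude the parabolicity is only weak.
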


\begin{proof}
Let $\hat{Q}_{\psi }$ denote the operator on the right hand side of (\ref%
{coflowmod2}). Expand the extra term in $\hat{Q}_{\psi }\psi $:%
\begin{eqnarray}
2d\left( \left( A-\func{Tr}T\right) \varphi \right) &=&-2d\left( \func{Tr}%
T\right) \wedge \varphi +2\left( A-\func{Tr}T\right) d\varphi  \notag \\
&=&-2d\left( \func{Tr}T\right) \wedge \varphi +2\left( A-\func{Tr}T\right)
\left( 4\tau _{1}\psi -3\ast \mathrm{i}_{\varphi }\left( \tau _{27}\right)
\right)  \notag \\
&=&-2d\left( \func{Tr}T\right) \wedge \varphi +\frac{8}{7}\func{Tr}T\left( A-%
\func{Tr}T\right) \psi -6\left( A-\func{Tr}T\right) \ast \mathrm{i}_{\varphi
}\left( \tau _{27}\right)  \label{coflow2add}
\end{eqnarray}%
Thus the only highest order terms are present in the $\Lambda _{7}^{4}$
component. From (\ref{dttorspsi}) 
\begin{equation*}
D_{\psi }\left( \func{Tr}T\right) \left( \chi \right) =\func{div}X+l.o.t.
\end{equation*}%
So, from (\ref{coflowlinmod1}) the linearization of $\hat{Q}_{\psi }$ is now
given by 
\begin{eqnarray}
\pi _{7}\left( D_{\psi }\hat{Q}_{\psi }\right) \left( \chi \right)
&=&-d\left( \func{div}X\right) \wedge \varphi +l.o.t. \\
\pi _{1\oplus 27}\left( D_{\psi }\hat{Q}_{\psi }\right) \left( \chi \right)
&=&3\ast \mathrm{i}_{\varphi }\left( \nabla ^{2}h_{de}-\nabla
_{(d}^{{}}\left( \func{div}h\right) _{e)}-\left( \nabla _{a}^{{}}\nabla
_{(d}^{{}}X_{\left\vert b\right\vert }^{{}}\right) \varphi _{\ \
e)}^{ab}\right. \\
&&\left. +\frac{1}{4}\nabla _{d}\nabla _{e}\func{Tr}h-\frac{1}{4}\left(
\nabla ^{2}\func{Tr}h\right) g_{de}\right) +l.o.t.  \notag
\end{eqnarray}%
From Proposition \ref{proplincoflow} we get that 
\begin{equation*}
D_{\psi }\Delta _{\psi }\left( \chi \right) =-\Delta _{\psi }\chi -\mathcal{L%
}_{V\left( \chi \right) }\psi +2d\left( \left( \func{div}X\right) \varphi
\right) +dF\left( \chi \right)
\end{equation*}%
so, 
\begin{eqnarray*}
D_{\psi }\hat{Q}_{\psi }\left( \chi \right) &=&-\Delta _{\psi }\chi -%
\mathcal{L}_{V\left( \chi \right) }\psi +2d\left( \left( \func{div}X\right)
\varphi \right) +dF\left( \chi \right) \\
&&+2d\left( A\dot{\varphi}-\left( \func{div}X\right) \varphi \right) \\
&=&-\Delta _{\psi }\chi -\mathcal{L}_{V\left( \chi \right) }\psi +d\hat{F}%
\left( \chi \right)
\end{eqnarray*}%
where $\hat{F}=F+2A\dot{\varphi}$, so is also a $3$-form-valued function
that is algebraic in $\chi $. To see that this $\hat{Q}_{\psi }$ is weakly
parabolic, consider the alternative expression (\ref{pi127lappsilin3}) for
the linearization $\hat{Q}_{\psi }$ at a closed form $\chi $ 
\begin{eqnarray*}
\pi _{7}\left( D_{\psi }\hat{Q}_{\psi }\right) \left( \chi \right)
&=&-d\left( \func{div}X\right) \wedge \varphi +l.o.t. \\
&=&-\frac{1}{2}\left( \nabla _{a}\left( \func{div}X\right) +\nabla ^{2}X_{a}+%
\func{curl}\left( \func{div}h\right) _{a}\right) \wedge \varphi +l.o.t. \\
\pi _{1\oplus 27}\left( D_{\psi }\hat{Q}_{\psi }\right) \left( \chi \right)
&=&\frac{3}{2}\ast \mathrm{i}_{\varphi }\left( \nabla ^{2}h_{de}-\nabla
_{(d}^{{}}\left( \func{div}h\right) _{e)}-\nabla _{a}^{{}}\nabla
_{(d}^{{}}X_{\left\vert b\right\vert }^{{}}\varphi _{\ \ e)}^{ab}\right. \\
&&\left. -\left( \nabla _{b}^{{}}\nabla _{m}^{{}}h_{cn}^{{}}\right) \varphi
_{\ \ (d}^{bc}\varphi _{\ \ \ e)}^{mn}\right) +l.o.t.
\end{eqnarray*}%
where we have used the relation (\ref{pi7cond2}) to rewrite $\pi _{7}\left(
D_{\psi }\hat{Q}_{\psi }\right) $. From this, we can now write down the
principal symbol of $D_{\psi }\hat{Q}_{\psi }$ evaluated at some vector $\xi 
$:%
\begin{eqnarray*}
\pi _{7}\left( \sigma _{\xi }\left( D_{\psi }\hat{Q}_{\psi }\right) \chi
\right) &=&-\frac{1}{2}\left( \xi _{a}\xi _{b}X^{b}+\left\vert \xi
\right\vert ^{2}X_{a}+\xi _{m}^{{}}\xi _{{}}^{n}h_{np}^{{}}\varphi _{\ \ 
\text{\ }a}^{mp}\right) \wedge \varphi +l.o.t. \\
\pi _{1\oplus 27}\left( \sigma _{\xi }\left( D_{\psi }\hat{Q}_{\psi }\right)
\chi \right) &=&\frac{3}{2}\ast \mathrm{i}_{\varphi }\left( \left\vert \xi
\right\vert ^{2}h_{de}-\xi _{(d}^{{}}\xi _{\left\vert a\right\vert
}^{{}}h_{\ e)}^{a}-\xi _{a}^{{}}\xi _{(d}^{{}}X_{\left\vert b\right\vert
}^{{}}\varphi _{\ \ e)}^{ab}\right. \\
&&\left. -\xi _{b}^{{}}\xi _{m}^{{}}h_{cn}^{{}}\varphi _{\ \ (d}^{bc}\varphi
_{\ \ \ e)}^{mn}\right) +l.o.t.
\end{eqnarray*}%
Then, 
\begin{eqnarray*}
\left\langle \sigma _{\xi }\left( D_{\psi }\hat{Q}_{\psi }\right) \chi ,\chi
\right\rangle &=&2\left\vert \xi \right\vert ^{2}\left\vert X\right\vert
^{2}+2\left( \xi _{a}X^{a}\right) ^{2}+\xi _{m}^{{}}\xi
_{{}}^{n}h_{np}^{{}}\varphi _{\ \ \ a}^{mp}X_{{}}^{a} \\
&&+\left\vert \xi \right\vert ^{2}\left\vert h\right\vert ^{2}-\xi
_{d}^{{}}\xi _{a}^{{}}h_{\ e}^{a}h_{{}}^{de}-\xi _{b}\xi
_{m}h_{cn}h_{dp}\varphi _{\ \ \ }^{bcd}\varphi _{\ \ \ \ \ }^{mnp} \\
&=&\frac{7}{4}\left( \left\vert \xi \right\vert ^{2}\left\vert X\right\vert
^{2}+\left( \xi _{a}X^{a}\right) ^{2}\right) +2\omega _{ab}\omega ^{ab}\geq 0
\end{eqnarray*}%
where 
\begin{equation*}
\omega _{ab}=\xi _{m}\varphi _{\ \ \ \ \ (a}^{mn}h_{b)n}^{{}}+\frac{1}{2}\xi
_{(a}X_{b)}.
\end{equation*}%
Note that if $\chi \in \Lambda _{1}^{4}$, then $\left\langle \sigma _{\xi
}\left( D_{\psi }\hat{Q}_{\psi }\right) \chi ,\chi \right\rangle =0$, hence $%
\hat{Q}_{\psi }$ is indeed only weakly parabolic in the direction of closed
forms.

Now consider the evolution of the volume functional: 
\begin{equation*}
\frac{dV}{dt}=\int_{M}\frac{d}{dt}\sqrt{\det g}.
\end{equation*}%
From (\ref{cocloseddetgdt}) and applying (\ref{dtdetg}) to (\ref{coflow2add}%
), we have 
\begin{eqnarray*}
\frac{d}{dt}\sqrt{\det g} &=&\left( \frac{1}{2}\left( \left\vert
T\right\vert ^{2}+\left( \func{Tr}T\right) ^{2}\right) +2\func{Tr}T\left( A-%
\func{Tr}T\right) \right) \sqrt{\det g} \\
&=&\frac{1}{2}\left( \left\vert T\right\vert ^{2}+\func{Tr}T\left( 4A-3\func{%
Tr}T\right) \right) \sqrt{\det g}.
\end{eqnarray*}
\end{proof}

In particular we see that if at every time $t$ along the flow the following
holds 
\begin{equation}
A\int_{M}\func{Tr}T\mathrm{vol}\geq \frac{3}{4}\int_{M}\left( \func{Tr}%
T\right) ^{2}\mathrm{vol}  \label{trTcond}
\end{equation}%
then the volume functional grows along the flow (\ref{coflowmod2}). This is
of course true if for positive $A$ 
\begin{equation}
0\leq \func{Tr}T\leq \frac{4}{3}A  \label{trTcond2}
\end{equation}%
holds everywhere on $M$ for all $t$. So as long as the condition (\ref%
{trTcond}) holds, our flow satisfies all the desired properties. We can also
get an alternative condition for the positivity of $\frac{dV}{dt}$. From (%
\ref{coclosedRicci}) we have an expression for the scalar curvature $R$ of a
co-closed $G_{2}$-stucture.:%
\begin{equation}
R=-\left\vert T\right\vert ^{2}+\left( \func{Tr}T\right) ^{2}
\label{scalcurvcoclosed}
\end{equation}%
Hence, we can rewrite the evolution of $\sqrt{\det g}$ as 
\begin{equation*}
\frac{d}{dt}\sqrt{\det g}=\frac{1}{2}\left( -R+2\func{Tr}T\left( 2A-\func{Tr}%
T\right) \right) \sqrt{\det g}.
\end{equation*}%
Thus, $\frac{dV}{dt}\geq 0$ if and only if 
\begin{equation}
\int_{M}R\mathrm{vol}\leq 2\int_{M}\func{Tr}T\left( 2A-\func{Tr}T\right) 
\mathrm{vol}  \label{Rcond}
\end{equation}

In order to understand whether any of the conditions (\ref{trTcond}), (\ref%
{trTcond2}) or (\ref{Rcond}) have any hope of holding along the flow, we
first need to understand how the torsion evolves along the flow (\ref%
{coflowmod2}).

\begin{proposition}
The evolution of the torsion tensor $T$ under the flow (\ref{coflowmod2}) is
given by 
\begin{eqnarray}
\frac{d}{dt}T_{ab} &=&\Delta _{L}T_{ab}+\left( \func{Tr}T-2A\right) \left( 
\func{curl}T\right) _{ab}+4T_{{}}^{np}\left( \nabla _{n}^{{}}T_{\
(a}^{m}\right) \varphi _{b)pm}^{{}}  \label{Tabmodflow} \\
&&+2\left( \nabla _{(a}^{{}}T_{{}}^{mn}\right) \varphi _{b)mp}^{{}}T_{n}^{\
p}+2\left( \nabla ^{m}\func{Tr}T\right) \varphi _{mn(a}^{{}}T_{b)}^{\
n}+\left( \left( \func{curl}T\right) \circ T\right) _{ab}  \notag \\
&&+\frac{2}{3}\psi _{acde}\psi _{bmnp}T^{cm}T^{dn}T^{ep}+2\left(
T^{3}\right) _{ab}+2\left( A-\func{Tr}T\right) \left( T^{2}\right) _{ab}+%
\frac{1}{2}\left( \left( \func{Tr}T\right) ^{2}-3\left\vert T\right\vert
^{2}\right) T_{ab}  \notag \\
&&+g_{ab}\left( \frac{1}{2}\left( \func{Tr}T\right) \left\vert T\right\vert
^{2}-\frac{1}{3}\func{Tr}\left( T^{3}\right) +\frac{1}{6}\left\langle T\circ
T,T\right\rangle -\frac{1}{6}\left( \func{Tr}T\right) ^{3}\right)  \notag
\end{eqnarray}%
where $\Delta _{L}$ denotes the Lichnerowicz Laplacian, given by 
\begin{equation*}
\Delta _{L}T_{ab}=\nabla ^{2}T_{ab}-2R_{\ (b}^{e}T_{a)e}^{{}}+2R_{acbd}T^{cd}
\end{equation*}%
Also, the evolution of $\func{Tr}T$ under the same flow is given by 
\begin{equation}
\frac{d}{dt}\func{Tr}T=\nabla ^{2}\func{Tr}T-\left\langle \func{curl}%
T,T\right\rangle -\frac{1}{2}\left\langle T\circ T,T\right\rangle -\func{Tr}%
\left( T^{3}\right) -2\left( A-\func{Tr}T\right) \left\vert T\right\vert ^{2}
\label{TrTmodflow}
\end{equation}
\end{proposition}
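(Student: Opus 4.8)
The plan is to recognise the flow (\ref{coflowmod2}) as an instance of the general flow (\ref{genpsiflow}) and then read the conclusion off Proposition~\ref{PropDtdtgen}. First, the right-hand side of (\ref{coflowmod2}) is already in the canonical form $\ast\chi = -X\wedge\varphi + 3\ast\mathrm{i}_{\varphi}(h)$ of (\ref{genpsiflow}): combining the decomposition (\ref{lappsi}) of $\Delta_{\psi}\psi$ with the expansion (\ref{coflow2add}) of the extra term, and using $\psi = 3\ast\mathrm{i}_{\varphi}(\tfrac13 g)$ together with $\tau_{27} = T - \tfrac17(\func{Tr}T)g$ (the latter valid because the structure is co-closed), one reads off
\[
X = \nabla\func{Tr}T, \qquad h = -\func{curl}T - \tfrac12\,T\circ T - T^{2} + \tfrac16\bigl(|T|^{2} + (\func{Tr}T)^{2}\bigr)g - 2(A-\func{Tr}T)T + \tfrac23(A-\func{Tr}T)(\func{Tr}T)\,g .
\]
As a built-in check, $\func{Tr}h = \tfrac23\bigl(|T|^{2} + (\func{Tr}T)^{2}\bigr) + \tfrac83(A-\func{Tr}T)\func{Tr}T$, which through (\ref{dtdetg}) reproduces the volume evolution already obtained in Theorem~\ref{thmmodcoflow}.

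Next I would substitute this $X$ and $h$ into formula (\ref{dttorspsi}). Since (\ref{coflowmod2}) preserves $[\psi]$, the $G_{2}$-structure stays co-closed, $T$ stays symmetric, and hence $\dot T$ is symmetric, so I may work with the symmetrised form (\ref{dttorssym}); in particular $\nabla_{(a}X_{b)} = \nabla_{a}\nabla_{b}\func{Tr}T$. Expanding, the zeroth-order term $\tfrac14(\func{Tr}h)T_{ab}$, the terms $-T_{a}^{\ c}h_{cb}$, $-T_{a}^{\ c}X^{d}\varphi_{dcb}$ and $-\tfrac14(\nabla_{c}\func{Tr}h)\varphi^{c}_{\ ab}$, together with $\func{curl}$ applied to the algebraic (non-$\func{curl}T$) part of $h$, produce — after repeated use of the contraction identities (\ref{contids}) and the Bianchi-type identities (\ref{Tsymcond}), (\ref{Tsymcond2}) of Lemma~\ref{LemSymTConds} — all the first-order terms $(\func{Tr}T - 2A)(\func{curl}T)_{ab}$, the $(\nabla T)\ast T$ terms, the $(\func{curl}T)\circ T$ term, and the cubic-in-$T$ and trace terms on the right of (\ref{Tabmodflow}).

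The main obstacle is the genuine second-order term. Applying $\func{curl}$ to the $-\func{curl}T$ summand of $h$ gives $-(\func{curl}\func{curl}T)_{ab}$, and I would commute the two covariant derivatives using the Ricci identity, exactly as in the $\pi_{7}$-linearisation computation inside the proof of Proposition~\ref{propgenop}; note that the extra $\psi$-term in the $\varphi\varphi$ identity (\ref{contids}) contributes a further commutator, hence another curvature term. This produces $\nabla^{2}T_{ab}$, a term $\nabla_{(a}(\func{div}T)_{b)} = \nabla_{a}\nabla_{b}\func{Tr}T$ that cancels the contribution $\nabla_{(a}X_{b)}$ by Lemma~\ref{LemSymTConds}, and curvature terms of the shape $R^{e}_{\ (b}T_{a)e}$ and $R_{acbd}T^{cd}$. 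Invoking the co-closed Ricci formula (\ref{coclosedRicci}) (and, more generally, the fact noted in the proof of Proposition~\ref{PropComponents} that the relevant components of the Riemann tensor and the full Ricci tensor of a co-closed $G_{2}$-structure are algebraic in $T$), these reorganise into the Lichnerowicz Laplacian $\Delta_{L}T_{ab} = \nabla^{2}T_{ab} - 2R^{e}_{\ (b}T_{a)e} + 2R_{acbd}T^{cd}$ together with extra cubic corrections that merge with the algebraic terms. The difficulty is purely in the bookkeeping: there are many quadratic and cubic contractions of $T$, and the $\psi\psi$ identity in (\ref{contids}) must be used repeatedly to collapse them; once (\ref{coclosedRicci}) is in hand, the remainder is a mechanical check that all coefficients agree with (\ref{Tabmodflow}).

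Finally, for (\ref{TrTmodflow}) I would compute $\tfrac{d}{dt}\func{Tr}T = \bigl(\tfrac{d}{dt}g^{ab}\bigr)T_{ab} + g^{ab}\tfrac{d}{dt}T_{ab}$ directly, using (\ref{dtginv}). The piece $\bigl(\tfrac{d}{dt}g^{ab}\bigr)T_{ab} = -\tfrac12(\func{Tr}h)\func{Tr}T + 2\langle h,T\rangle$ supplies the $-\langle\func{curl}T,T\rangle$ and $-\tfrac12\langle T\circ T,T\rangle$ contributions; $g^{ab}\nabla_{a}X_{b} = \nabla^{2}\func{Tr}T$; and $g^{ab}(\func{curl}h)_{ab} = 0$ because the curl of a symmetric $2$-tensor is traceless. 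Collecting terms and simplifying with (\ref{contids}) yields (\ref{TrTmodflow}); alternatively, tracing (\ref{Tabmodflow}) gives the same result and serves as a further consistency check.
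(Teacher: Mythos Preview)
Your proposal is correct and follows essentially the same approach as the paper: identify $X$ and $h$ from (\ref{lappsi}) and (\ref{coflow2add}), substitute into (\ref{dttorspsi}), and simplify using the contraction identities (\ref{contids}), the Ricci identity, and the co-closed Ricci formula (\ref{coclosedRicci}); for $\func{Tr}T$ you both compute $\frac{d}{dt}(g^{ab}T_{ab})$ directly rather than tracing (\ref{Tabmodflow}). Your $X$, $h$, and $\func{Tr}h$ agree with the paper's (\ref{modflowcomps}) after a trivial regrouping of the trace terms, and you supply more detail than the paper does on how $-\func{curl}\,\func{curl}\,T$ produces $\nabla^{2}T$ plus the curvature pieces that assemble into $\Delta_{L}T$, whereas the paper simply records ``after a lengthy calculation''.
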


\begin{proof}
From Proposition \ref{PropDtdtgen}, we know that if 
\begin{equation*}
\frac{d}{dt}\psi =\ast \left( X\lrcorner \psi \right) +3\ast \mathrm{i}%
_{\varphi }\left( h\right)
\end{equation*}%
then, 
\begin{equation*}
\frac{dT_{ab}}{dt}=\frac{1}{4}\left( \func{Tr}h\right) T_{ab}-T_{a}^{\
c}h_{cb}^{{}}-T_{a}^{\ c}X_{{}}^{d}\varphi _{dcb}^{{}}+\left( \func{curl}%
h\right) _{ab}+\nabla _{a}X_{b}-\frac{1}{4}\left( \nabla _{c}^{{}}\func{Tr}%
h\right) \varphi _{\ ab}^{c}.
\end{equation*}%
Then using (\ref{lapdecomsym}) and (\ref{dphicc}) we find that for the flow (%
\ref{coflowmod2}), we have 
%TCIMACRO{\TeXButton{TeX field}{\begin{subequations}}}%
%BeginExpansion
\begin{subequations}%
%EndExpansion
\label{modflowcomps} 
\begin{eqnarray}
X &=&\nabla \func{Tr}T \\
h_{ab} &=&-\func{curl}\left( T\right) _{ab}-\frac{1}{2}\left( T\circ
T\right) _{ab}-\left( T^{2}\right) _{ab}+\frac{1}{6}g_{ad}\left\vert
T\right\vert ^{2}-2\left( A-\func{Tr}T\right) T_{ad} \\
&&+\frac{1}{6}\func{Tr}T\left( 4A-3\func{Tr}T\right) g_{ad} \\
\func{Tr}h &=&\frac{2}{3}\left( \left\vert T\right\vert ^{2}+\func{Tr}%
T\left( 4A-3\func{Tr}T\right) \right)  \notag
\end{eqnarray}%
%TCIMACRO{\TeXButton{TeX field}{\end{subequations}}}%
%BeginExpansion
\end{subequations}%
%EndExpansion
We then substitute $X,$ $h$ and $\func{Tr}h$ into the expression for $\frac{%
dT_{ab}}{dt}$, and simplify using the contraction identities (\ref{contids}%
), the Ricci identity and the expression for the Ricci curvature (\ref%
{coclosedRicci}). After a lengthy calculation, the expression (\ref%
{Tabmodflow}) follows.

The evolution of $\func{Tr}T$ can be derived from (\ref{Tabmodflow}), but an
easier way is to calculate it first for a general flow using Proposition (%
\ref{propflow}) and Proposition \ref{PropDtdtgen}, and then applying (\ref%
{modflowcomps}) 
\begin{eqnarray*}
\frac{d}{dt}\func{Tr}T &=&\frac{d}{dt}\left( g^{ab}T_{ab}\right) \\
&=&\left( \frac{d}{dt}g^{ab}\right) T_{ab}+g^{ab}\frac{d}{dt}\left(
T_{ab}\right) \\
&=&\left( -\frac{1}{2}\left( \func{Tr}h\right) g^{ab}+2h^{ab}\right) T_{ab}
\\
&&+g^{ab}\left( \frac{1}{4}\left( \func{Tr}h\right) T_{ab}-T_{a}^{\
c}h_{cb}^{{}}-T_{a}^{\ c}X_{{}}^{d}\varphi _{dcb}^{{}}+\left( \func{curl}%
h\right) _{ab}+\nabla _{a}X_{b}-\frac{1}{4}\left( \nabla _{c}^{{}}\func{Tr}%
h\right) \varphi _{\ ab}^{c}\right) \\
&=&-\frac{1}{4}\func{Tr}h\func{Tr}T+\left\langle T,h\right\rangle +\func{div}%
X \\
&=&\nabla ^{2}\func{Tr}T-\left\langle \func{curl}\left( T\right)
,T\right\rangle -\frac{1}{2}\left\langle T\circ T,T\right\rangle -\func{Tr}%
\left( T^{3}\right) +\frac{1}{6}\left\vert T\right\vert ^{2}\func{Tr}%
T-2\left( A-\func{Tr}T\right) \left\vert T\right\vert ^{2} \\
&&+\frac{1}{6}\left( \func{Tr}T\right) ^{2}\left( 4A-3\func{Tr}T\right) -%
\frac{1}{6}\left( \left\vert T\right\vert ^{2}+\func{Tr}T\left( 4A-3\func{Tr}%
T\right) \right) \\
&=&\nabla ^{2}\func{Tr}T-\left\langle \func{curl}T,T\right\rangle -\frac{1}{2%
}\left\langle T\circ T,T\right\rangle -\func{Tr}\left( T^{3}\right) -2\left(
A-\func{Tr}T\right) \left\vert T\right\vert ^{2}
\end{eqnarray*}
\end{proof}

In (\ref{TrTmodflow}), if \ $A$ is large enough and if $T_{ab}$ is positive
definite, then at least the non-derivative terms will be negative. This is
due to the following observation regarding the $G_{2}$ product $\circ $.

\begin{lemma}
Suppose $A$ and $B$ are positive-definite symmetric $2$-tensors. Then the
product $A\circ B$ is also positive definite.
\end{lemma}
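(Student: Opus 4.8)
The plan is to verify positive definiteness by testing on an arbitrary nonzero vector $v$: we must show $(A\circ B)_{ab}v^av^b>0$. The first step is to contract the defining formula $(A\circ B)_{ab}=\varphi_{amn}\varphi_{bpq}A^{mp}B^{nq}$ with $v^av^b$. Writing $\omega_{mn}=\varphi_{amn}v^a=(v\lrcorner\varphi)_{mn}$ for the associated $2$-form in $\Lambda^2_7$, this gives
\begin{equation*}
(A\circ B)_{ab}v^av^b=\omega_{mn}\omega_{pq}A^{mp}B^{nq},
\end{equation*}
so the whole question reduces to understanding this contraction of two positive-definite symmetric tensors against a single $\Lambda^2_7$ form.

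The second step is a sum-of-squares rewriting. Since $A$ and $B$ are positive definite and symmetric, diagonalize them in orthonormal eigenbases at the point in question: $A^{mp}=\sum_k\lambda_k e_k^m e_k^p$ and $B^{nq}=\sum_l\mu_l f_l^n f_l^q$, with all $\lambda_k,\mu_l>0$. Substituting, the contraction collapses to
\begin{equation*}
\omega_{mn}\omega_{pq}A^{mp}B^{nq}=\sum_{k,l}\lambda_k\mu_l\,\bigl(\omega_{mn}e_k^m f_l^n\bigr)^2\geq 0,
\end{equation*}
a nonnegatively weighted sum of squares. This already gives positive \emph{semi}definiteness of $A\circ B$ (and incidentally makes the commutativity of $\circ$ on symmetric tensors noted after (\ref{phphprod}) manifest).

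The third step is to upgrade to strict positivity. The sum above is zero only if $\omega_{mn}e_k^m f_l^n=0$ for every $k$ and $l$; since $\{e_k\}$ and $\{f_l\}$ are bases of the tangent space, this forces $\omega=0$, i.e. $v\lrcorner\varphi=0$. But $v\mapsto v\lrcorner\varphi$ is injective --- this is precisely the statement, recalled in the decomposition $\Lambda^2=\Lambda^2_7\oplus\Lambda^2_{14}$, that $\Lambda^2_7=\{\alpha\lrcorner\varphi\}$ is $7$-dimensional, equivalently that $\varphi$ is a positive (nondegenerate) $3$-form. Hence $v\neq 0$ gives $\omega\neq 0$, so at least one term $\lambda_k\mu_l(\omega_{mn}e_k^m f_l^n)^2$ is strictly positive and the sum is $>0$, proving $A\circ B>0$.

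I do not expect a genuine obstacle: once the contraction with $v$ is performed the argument is elementary linear algebra, and it uses none of the $\varphi$--$\psi$ contraction identities. The only structural input is the injectivity of $v\mapsto v\lrcorner\varphi$, which is part of the $G_2$ algebra set up earlier; if one prefers, positivity of the bilinear form $B_\varphi$ in (\ref{Bphi}) delivers the same fact.
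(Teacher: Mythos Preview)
Your proof is correct and follows essentially the same sum-of-squares strategy as the paper: both factor $A$ and $B$ (the paper via the symmetric square roots $A^{1/2}$, $B^{1/2}$, you via eigendecompositions) and then recognize $(A\circ B)_{ab}$ as a Gram-type expression. The paper compresses the argument to $(A\circ B)_{ab}=U_aU_b$ with $U_a=\varphi_{amn}(A^{1/2})^{mc}(B^{1/2})^{nd}$ (with $c,d$ implicitly summed), which directly exhibits positive \emph{semi}definiteness; your version makes the strict-positivity step explicit by invoking the injectivity of $v\mapsto v\lrcorner\varphi$, a point the paper leaves tacit.
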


\begin{proof}
The matrices $A$ and $B$ have unique positive-definite symmetric square root
matrices $A^{\frac{1}{2}}$ and $B^{\frac{1}{2}}$. So we can rewrite $A\circ
B $ as 
\begin{eqnarray*}
\left( A\circ B\right) _{ab} &=&\varphi _{amn}\varphi _{bpq}\left( A^{\frac{1%
}{2}}\right) ^{mc}\left( A^{\frac{1}{2}}\right) _{c}^{\ p}\left( B^{\frac{1}{%
2}}\right) ^{nd}\left( B^{\frac{1}{2}}\right) _{d}^{\ q} \\
&=&U_{a}U_{b}
\end{eqnarray*}%
where 
\begin{equation*}
U_{a}=\left( \varphi _{amn}\left( A^{\frac{1}{2}}\right) ^{mc}\left( B^{%
\frac{1}{2}}\right) ^{nd}\right) .
\end{equation*}%
Hence $A\circ B$ is indeed positive definite.
\end{proof}

However in order to be able to use the Maximum Principle to conclude that $%
\func{Tr}T$ is always bounded from above, if it is initially so, we would
need to show that a positive definite $T_{ab}$ remains positive definite
along the flow and we also need to be able to control the term $\left\langle 
\func{curl}T,T\right\rangle $ in (\ref{TrTmodflow}). Equivalently we would
need to be able to control $\left\langle Ric,T\right\rangle $, since $\func{%
curl}T$ enters the expression for the Ricci curvature (\ref{coclosedRicci}).
This is certainly not obvious from (\ref{Tabmodflow}), however given the
similarities with Ricci flow, and corresponding results about the evolution
of the Ricci and scalar curvatures along the Ricci flow, it seems to be a
reasonable conjecture that $T_{ab}$ is positive and $\func{Tr}T$ does in
fact satisfy (\ref{trTcond2}) along the flow. The properties of the torsion
and the curvature along the flow (\ref{coflowmod2}) will be subject to
further research.

\section{Short-time existence and uniqueness}

\label{SecShortTime}\setcounter{equation}{0}Let us now formulate the initial
value problem for which we want to prove short-time existence and uniqueness:

\begin{equation}
\left\{ 
\begin{array}{c}
\frac{d}{dt}\psi =\Delta _{\psi }\psi +2d\left( \left( A-\func{Tr}T_{\psi
}\right) \ast _{\psi }\psi \right) \\ 
d\psi =0 \\ 
\left. \psi \right\vert _{t=0}=\psi _{0}%
\end{array}%
\right.  \label{ivp1}
\end{equation}%
Here will adapt the method that was used by Bryant and Xu \cite{BryantXu} to
show short-time existence and uniqueness for the Laplacian flow of $\varphi $
(\ref{orig-flow}). Since for closed $\psi \left( t\right) $, the right hand
side of the flow is exact, for any $t,$ $\psi \left( t\right) $ stays within
the same cohomology class $\left[ \psi _{0}\right] $ and hence we can write 
\begin{equation}
\psi \left( t\right) =\psi _{0}+\chi \left( t\right)
\end{equation}%
where $\chi \left( t\right) $ is an exact form. We can thus rewrite (\ref%
{ivp1}) in an equivalent way:%
\begin{equation}
\left\{ 
\begin{array}{c}
\frac{d}{dt}\chi =\Delta _{\psi }\psi +2d\left( \left( A-\func{Tr}T_{\psi
}\right) \ast _{\psi }\psi \right) \\ 
\chi \ \ \text{is exact} \\ 
\left. \chi \right\vert _{t=0}=0%
\end{array}%
\right.  \label{ivp2}
\end{equation}%
As before, suppose $\chi $ is given by 
\begin{equation*}
\chi =\ast \left( X\lrcorner \psi +3\mathrm{i}_{\varphi }\left( h\right)
\right) .
\end{equation*}%
Then, as before, define the vector 
\begin{equation}
V\left( \chi \right) =\frac{3}{4}\nabla \func{Tr}h-2\func{curl}X
\label{Vchi}
\end{equation}%
From Theorem \ref{thmmodcoflow}, we can say that the following initial value
problem is parabolic in the direction of closed forms%
\begin{equation}
\left\{ 
\begin{array}{c}
\frac{d}{dt}\chi =\Delta _{\psi }\psi +2d\left( \left( A-\func{Tr}T_{\psi
}\right) \ast _{\psi }\psi \right) +\mathcal{L}_{V\left( \chi \right) }\psi
\\ 
\chi \ \ \text{is exact} \\ 
\left. \chi \right\vert _{t=0}=0%
\end{array}%
\right.  \label{ivp3}
\end{equation}%
\newline
The idea is to first show short-time existence and uniqueness of solutions
for the flow (\ref{ivp3}), and then from these, obtain solutions of (\ref%
{ivp2}) via diffeomorphisms. Since (\ref{ivp3}) is parabolic only in certain
directions, standard theory of parabolic PDEs does not apply, and we thus
have to use the Nash-Moser inverse function theorem for tame Fr\'{e}chet
spaces. This technique was first used by Hamilton for the Ricci flow \cite%
{Hamilton3folds,HamiltonNashMoser} and Bryant and Xu for the Laplacian flow
of $\varphi $ \cite{BryantXu}. We first review basic definitions as
introduced by Hamilton \cite{HamiltonNashMoser}.

\begin{definition}
\begin{enumerate}
\item A \emph{graded Fr\'{e}chet space }$\mathcal{F}$ is a complete
Hausdorff topological vector space with the topology defined by a collection
of increasing semi-norms $\left\{ \left\Vert \cdot \right\Vert _{n}\right\}
_{n=1}^{\infty }$, so that%
\begin{equation*}
\left\Vert x\right\Vert _{0}\leq \left\Vert x\right\Vert _{1}\leq \left\Vert
x\right\Vert _{2}\leq ...
\end{equation*}%
for every $x\in \mathcal{F},$ so that a sequence converges if and only if it
converges with respect to each semi-norm and a subset $\mathcal{U\subset }$ $%
\mathcal{F}$ is open if and only if around every point $x\in \mathcal{U}$
there exists an open ball contained in $\mathcal{U}$ with respect to the
semi-norm $\left\Vert \cdot \right\Vert _{n}$ for some $n.$ Such a
collection of norms is called a \emph{grading}.

\item Two gradings $\left\{ \left\Vert \cdot \right\Vert _{n}\right\}
_{n=1}^{\infty }$ and $\left\{ \left\Vert \cdot \right\Vert _{n}^{\prime
}\right\} _{n=1}^{\infty }$ are said to be \emph{tamely equivalent }of
degree $r$ and base $b$, if for all $x\in \mathcal{F}$ 
\begin{equation}
\left\Vert x\right\Vert _{n}\leq C\left( n\right) \left\Vert x\right\Vert
_{n+r}^{\prime }\ \ \text{and }\left\Vert x\right\Vert _{n}^{\prime }\leq
C\left( n\right) \left\Vert x\right\Vert _{n+r}
\end{equation}%
for all $n\geq b$.

\item Let $\mathcal{F}$ and $\mathcal{G}$ be graded Fr\'{e}chet spaces. Then
a linear map $L:\mathcal{F}$ $\longrightarrow $ $\mathcal{G}$ is \emph{tame }%
if it satisfies a \emph{tame estimate }of degree $r$ and base $b,$ that is,
for each $n\geq b$ the following holds:\emph{\ }%
\begin{equation}
\left\Vert Lx\right\Vert _{n}\leq C\left( n\right) \left\Vert x\right\Vert
_{n+r}
\end{equation}%
for some constant $C\left( n\right) $ that may depend on $n$. A tame linear
map is continuous in the topology of $\mathcal{F}$.

\item Suppose $P:\mathcal{F}$ $\longrightarrow $ $\mathcal{G}$ is a
continuous map. Then $P$ is tame if it satisfies the following tame estimate
of degree $r$ and base $b$:%
\begin{equation}
\left\Vert Px\right\Vert _{n}\leq C\left( n\right) \left( 1+\left\Vert
x\right\Vert _{n+r}\right) \ \text{for each }n\geq b
\end{equation}%
A tame map is said to be \emph{smooth tame }if all its derivatives are tame.

\item Let $\left( B,\left\Vert \cdot \right\Vert _{B}\right) $ be a Banach
space, then $\Sigma \left( B\right) $ denotes the graded Fr\'{e}chet space
of all sequences $\left\{ x_{k}\right\} _{k\in \mathbb{N}}$ in $B$ such that
for all $n\geq 0$ 
\begin{equation}
\left\Vert \left\{ x_{k}\right\} _{k\in \mathbb{N}}\right\Vert
_{n}:=\sum_{k=0}^{\infty }e^{nk}\left\Vert x_{k}\right\Vert _{B}<\infty
\end{equation}

\item A graded Fr\'{e}chet space $\mathcal{F}$ is \emph{tame }if there
exists a Banach space $B$ and two tame linear maps $L:\mathcal{F}%
\longrightarrow \Sigma \left( B\right) $ and $M:\Sigma \left( B\right)
\longrightarrow \mathcal{F}$ such that $M\circ L$ is the identity on $%
\mathcal{F}$.
\end{enumerate}
\end{definition}

The main reason for introducing the Fr\'{e}chet space formalism is the
Nash-Moser inverse function theorem.

\begin{theorem}[Nash-Moser Inverse Function Theorem \protect\cite%
{HamiltonNashMoser}]
\label{ThmNashMoser}Let $\mathcal{F}$ and $\mathcal{G}$ be tame Fr\'{e}chet
spaces, and $f:\mathcal{U}\subset \mathcal{F}$ $\longrightarrow $ $\mathcal{G%
}$ a smooth tame map. Suppose that

\begin{enumerate}
\item The derivative $Df\left( x\right) :\mathcal{F}$ $\longrightarrow $ $%
\mathcal{G}$ is a linear isomorphism for all $x\in \mathcal{U}$

\item The map $\mathcal{U}\times $ $\mathcal{G}\longrightarrow \mathcal{F}$
given by 
\begin{equation*}
\left( x,v\right) \longrightarrow \left( Df\left( x\right) \right)
^{-1}\left( v\right)
\end{equation*}%
is a smooth tame map. Then, $f$ is locally invertible and each local inverse 
$f^{-1}$ is a smooth tame map.
\end{enumerate}
\end{theorem}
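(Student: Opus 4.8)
Since this is Hamilton's hard inverse function theorem (the Nash--Moser technique), the plan is not to reprove it from first principles but to reproduce the standard smoothed Newton iteration and to cite \cite{HamiltonNashMoser} for the complete details. Fix $x_0$ with $f(x_0)=y_0$; given $y$ near $y_0$ we seek $x$ near $x_0$ with $f(x)=y$. The naive Newton scheme $x_{k+1}=x_k-(Df(x_k))^{-1}(f(x_k)-y)$ converges quadratically when measured in one fixed semi-norm, but hypothesis (2) supplies the inverse only with a fixed loss of $r$ derivatives, so the iteration cannot close up in the graded Fr\'{e}chet space. The remedy is to interpose smoothing operators between Newton steps, and the work is entirely in proving that the resulting modified iteration still converges.

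First I would establish the existence of smoothing operators on a tame Fr\'{e}chet space: a family of linear maps $S_\theta:\mathcal{F}\to\mathcal{F}$ indexed by $\theta\geq 1$ with
\begin{equation*}
\left\Vert S_\theta x\right\Vert_{n+j}\leq C_{n,j}\,\theta^{j}\left\Vert x\right\Vert_{n},\qquad \left\Vert x-S_\theta x\right\Vert_{n}\leq C_{n,j}\,\theta^{-j}\left\Vert x\right\Vert_{n+j}
\end{equation*}
for all $n,j\geq 0$. These are built by transporting $\mathcal{F}$ to the sequence space $\Sigma(B)$ via the tame maps $L$ and $M$ from the definition of tameness, where smoothing is merely truncation-and-damping of a sequence, and then transporting back; the same device yields the interpolation inequalities $\left\Vert x\right\Vert_{m}\leq C\left\Vert x\right\Vert_{n}^{1-t}\left\Vert x\right\Vert_{p}^{t}$ for $m=(1-t)n+tp$, which are used repeatedly in the estimates.

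Next I would set up the modified iteration $x_{k+1}=x_k-S_{\theta_k}(Df(x_k))^{-1}(f(x_k)-y)$ with a super-exponentially growing parameter sequence $\theta_k=\theta_0^{q^{k}}$ for a fixed $q\in(1,2)$, and prove by induction a coupled system of estimates: that each $x_k$ stays in $\mathcal{U}$; that the high-order norms $\left\Vert x_k-x_0\right\Vert_{n}$ grow no faster than a controlled power of $\theta_k$; and that the low-order residual $\left\Vert f(x_k)-y\right\Vert$ decays like a negative power of $\theta_k$. The heart of the matter is a single estimate obtained by Taylor-expanding $f$ to second order (using the tame bound on $D^2 f$ supplied by smooth tameness of $f$) and splitting the Newton error into a genuinely quadratic part and a part caused by the smoothing; the free exponents and the rate $q$ are then chosen so that the quadratic gain strictly dominates the derivative loss. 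These estimates force $\{x_k\}$ to be Cauchy in every semi-norm, hence to converge to some $x_\infty\in\mathcal{F}$ with $f(x_\infty)=y$, which shows $f$ carries a neighbourhood of $x_0$ onto a neighbourhood of $y_0$.

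Injectivity on a possibly smaller neighbourhood follows from a similar but easier estimate, integrating $Df$ along the segment joining two candidate preimages and using its uniform invertibility there. Finally, smooth tameness of the local inverse $g=f^{-1}$ is obtained by differentiating $f(g(y))=y$ to get $Dg(y)=(Df(g(y)))^{-1}$, which is smooth tame as the composition of $g$ with the map in hypothesis (2), and then bootstrapping to all higher derivatives, while the estimates from the iteration give the required tame bounds on $g$ itself. The main obstacle, as always with Nash--Moser, is purely the bookkeeping in the inductive step: one must simultaneously fix the growth rate $q$, the base index for the semi-norms, and several free exponents so that all the inequalities hold at once. Once the smoothing operators and interpolation inequalities are in hand this is routine, and I would refer to \cite{HamiltonNashMoser} for the full argument.
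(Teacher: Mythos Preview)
The paper does not prove this theorem at all: it is stated with a citation to Hamilton \cite{HamiltonNashMoser} and then simply invoked as a black box to conclude local invertibility of the map $F$ in Lemma~\ref{lem02}. So there is no ``paper's own proof'' to compare against.

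Your sketch is a faithful outline of Hamilton's argument --- smoothing operators built from the tame direct-summand structure, interpolation inequalities, a smoothed Newton iteration with super-exponentially growing smoothing parameter, and the inductive bookkeeping balancing derivative loss against quadratic convergence --- and it is appropriate that you explicitly defer the full details to \cite{HamiltonNashMoser}. For the purposes of this paper, however, no proof is expected or required; the theorem is quoted, not established.
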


As shown in \cite{HamiltonNashMoser}, if $M$ is a compact manifold, and $V$
is a vector bundle over $M$, then the space $C^{\infty }\left( M,V\right) $
of smooth sections of $V$ is in fact a tame space. Different tamely
equivalent gradings can be chosen, but usually either Sobolev or supremum
norms are used. Moreover, suppose $P$ is a non-linear (smooth) vector bundle
differential operator, then it is also shown in \cite{HamiltonNashMoser}
that it is in fact a smooth tame map. Hence Theorem \ref{ThmNashMoser} is
very useful for the study of non-linear PDEs since it provides sufficient
conditions for local existence of solutions.

Consider now tame spaces of time-dependent sections of vector bundles, as
introduced in \cite{Hamilton3folds}. Let $u\in C^{\infty }\left( \left[ 0,T%
\right] \times M,V\right) $ be a time dependent section of the vector bundle 
$V$ over $M$. Define 
\begin{equation}
\left\vert u\right\vert _{n}^{2}=\int_{0}^{T}\left\vert u\left( t\right)
\right\vert _{H^{n}}^{2}dt
\end{equation}%
where $\left\vert \cdot \right\vert _{H^{n}}$ is the Sobolev $L_{2}$ norm of 
$u$ and its covariant derivatives up to degree $n$. Hence $\left\vert
u\right\vert _{n}$ only takes into account space derivatives. Now, define 
\begin{equation}
\left\Vert u\right\Vert _{n}=\sum_{2j\leq n}\left\vert \left( \frac{\partial 
}{\partial t}\right) ^{j}u\right\vert _{n-2j}.  \label{timegrading}
\end{equation}%
This is a weighted norm counting one time derivative equal to two space
derivatives. In particular, the grading (\ref{timegrading}) makes $C^{\infty
}\left( \left[ 0,T\right] \times M,V\right) $ a tame space. We can also
define a tamely equivalent grading $\left\vert \left[ \cdot \right]
\right\vert _{n}$ given by 
\begin{equation}
\left\vert \left[ u\right] \right\vert _{n}=\sum_{2j\leq n}\left[ \left( 
\frac{\partial }{\partial t}\right) ^{j}u\right] _{n-2j}
\label{timegrading2}
\end{equation}%
where $\left[ u\right] _{n}$ is the supremum norm of $u$ and its space
derivatives up to degree $n$.

Since we are interested in the flow of exact forms (\ref{ivp3}), we
introduce the set 
\begin{equation}
\mathcal{U}=\left\{ \chi \in dC^{\infty }\left( \left[ 0,T\right] \times
M,\Lambda ^{3}\left( M\right) \right) :\psi _{0}+\chi \ \ \text{is a
definite }4\text{-form}\right\} .  \label{Uspace}
\end{equation}%
This is an open set of the space $\mathcal{F}$ of time-dependent exact $4$%
-forms on $M$: 
\begin{equation}
\mathcal{F}=dC^{\infty }\left( \left[ 0,T\right] \times M,\Lambda ^{3}\left(
M\right) \right)  \label{FrechetF}
\end{equation}%
Also define the space $\mathcal{G}$ of time-independent exact $4$-forms: 
\begin{equation}
\mathcal{G}=dC^{\infty }\left( M,\Lambda ^{3}\left( M\right) \right)
\label{FrechetG}
\end{equation}%
Now let 
\begin{equation*}
\mathcal{H}=\mathcal{F}\times \mathcal{G}
\end{equation*}%
and consider the map 
\begin{equation*}
F:\mathcal{U}\longrightarrow \mathcal{H}
\end{equation*}%
given by 
\begin{equation}
\chi \longrightarrow \left( \frac{d}{dt}\chi -\Delta _{\psi }\psi -2d\left(
\left( A-\func{Tr}T_{\psi }\right) \ast _{\psi }\psi \right) -\mathcal{L}%
_{V\left( \chi \right) }\psi ,\left. \chi \right\vert _{t=0}\right) .
\label{mapF}
\end{equation}%
Here $\chi =\psi -\psi _{0}$. Note that in \cite{BryantXu} exactly the same
spaces were considered for exact $3$-forms rather than $4$-forms, and the
map involved the Laplacian flow operator for the $3$-form $\varphi $. In 
\cite[Proposition 4.2]{BryantXu} it was shown that the spaces $dC^{\infty
}\left( \left[ 0,T\right] \times M,\Lambda ^{2}\left( M\right) \right) $, $%
dC^{\infty }\left( M,\Lambda ^{2}\left( M\right) \right) $ and hence their
product are tame. However exactly the same proof applies to exact $3$-forms.
Thus we have

\begin{proposition}[\protect\cite{BryantXu}]
The space $\mathcal{F}$ is a tame Fr\'{e}chet space with the grading $%
\left\Vert \cdot \right\Vert _{n}$ restricted from $C^{\infty }\left( \left[
0,T\right] \times M,\Lambda ^{3}\left( M\right) \right) $. The space $%
\mathcal{G}$ is a tame Fr\'{e}chet space with the grading $\left\vert \cdot
\right\vert _{n}$ and the product $\mathcal{H\,}=\mathcal{F}\times \mathcal{G%
}$ is a tame Fr\'{e}chet space with the grading $\left\Vert \cdot
\right\Vert _{n}+\left\vert \cdot \right\vert _{n}$.
\end{proposition}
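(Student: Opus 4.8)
The plan is to derive all three assertions from two standard structural facts about tame Fr\'{e}chet spaces, both due to Hamilton \cite{HamiltonNashMoser}. First, for a compact manifold $M$ and a vector bundle $V\to M$, the section space $C^\infty(M,V)$ is a tame Fr\'{e}chet space, and --- as used for the Ricci flow in \cite{Hamilton3folds} --- the space $C^\infty([0,T]\times M,V)$ of time-dependent sections, graded by the weighted norms \eqref{timegrading} in which one time derivative counts as two space derivatives, is also tame. Second, a direct summand of a tame Fr\'{e}chet space which is cut out by a tame linear projection is again tame, and a finite product of tame spaces is tame. Applying the first fact with $V=\Lambda^{4}T^{\ast}M$ shows that $C^\infty(M,\Lambda^{4}(M))$ and $C^\infty([0,T]\times M,\Lambda^{4}(M))$ are tame; these serve as the ambient spaces.

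The heart of the argument is to realize $\mathcal{G}=dC^\infty(M,\Lambda^{3}(M))$ as a tame direct summand of $C^\infty(M,\Lambda^{4}(M))$. Fixing the background metric $g_{0}$, use the $L^{2}$-orthogonal Hodge decomposition of smooth $4$-forms,
\[
C^\infty(M,\Lambda^{4})\ =\ dC^\infty(M,\Lambda^{3})\ \oplus\ d^{\ast}C^\infty(M,\Lambda^{5})\ \oplus\ \mathcal{H}^{4}(M),
\]
in which the projection onto the exact summand is $\Pi=d\,d^{\ast}G$, where $G$ is the Green operator of the Hodge Laplacian of $g_{0}$. Since $G$ has order $-2$ and $d\,d^{\ast}$ has order $2$, $\Pi$ is a pseudodifferential operator of order $0$; in particular $\Pi$ and its complementary projection are bounded on every Sobolev space $H^{n}$, hence are tame linear maps of degree $0$. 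By Hamilton's summand lemma, $\mathcal{G}$ is therefore tame, with the grading $\left\vert\cdot\right\vert_{n}$ induced from the ambient space. For $\mathcal{F}=dC^\infty([0,T]\times M,\Lambda^{3}(M))$ the idea is to apply the same operator $\Pi$ at each fixed time $t$ on the spatial factor: because $\Pi$ is built only from the spatial operators $d$ and $d^{\ast}$ it commutes with $\partial/\partial t$, so $\left\Vert\Pi u\right\Vert_{n}=\sum_{2j\le n}|\Pi(\partial/\partial t)^{j}u|_{n-2j}\le C(n)\left\Vert u\right\Vert_{n}$; thus $\Pi$ is a tame projection for the weighted grading \eqref{timegrading} as well, and $\mathcal{F}$ is a tame direct summand of $C^\infty([0,T]\times M,\Lambda^{4}(M))$, hence a tame Fr\'{e}chet space with the grading $\left\Vert\cdot\right\Vert_{n}$.

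Finally, the product $\mathcal{H}=\mathcal{F}\times\mathcal{G}$ is tame: if $L_{i}\colon\mathcal{F}_{i}\to\Sigma(B_{i})$ and $N_{i}\colon\Sigma(B_{i})\to\mathcal{F}_{i}$ (for $i=1,2$) are the tame linear maps realizing the two factors as tame spaces, then $L_{1}\times L_{2}$ and $N_{1}\times N_{2}$, together with the natural identification $\Sigma(B_{1})\times\Sigma(B_{2})\cong\Sigma(B_{1}\oplus B_{2})$, exhibit $\mathcal{F}\times\mathcal{G}$ as a tame space, the tame estimates for the grading $\left\Vert\cdot\right\Vert_{n}+\left\vert\cdot\right\vert_{n}$ holding factor by factor. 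This last step is purely formal.

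The part I expect to require genuine care is the claim that $\Pi$ satisfies tame estimates with respect to the weighted grading on time-dependent sections --- concretely, that elliptic regularity for the Hodge Laplacian on $(M,g_{0})$ gives the $H^{n}$-operator bounds for $\Pi$ needed for a tame estimate, and that the bookkeeping counting one $t$-derivative as two spatial derivatives is compatible with $\Pi$ being purely spatial. This is exactly what is carried out in \cite[Proposition 4.2]{BryantXu} for exact $3$-forms; since the Hodge-theoretic input there depends only on the background metric and is insensitive to the form degree, the same argument applies verbatim to exact $4$-forms, so I would invoke it rather than repeat it.
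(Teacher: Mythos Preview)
Your proposal is correct and takes essentially the same approach as the paper: the paper does not give an independent proof but simply observes that \cite[Proposition 4.2]{BryantXu} proves the analogous statement for exact $3$-forms and that the same argument carries over unchanged to exact $4$-forms. You go further by sketching the underlying Hodge-theoretic mechanism (the order-zero projection $\Pi=dd^{\ast}G$ realizing exact forms as a tame direct summand, applied fibrewise in $t$), but you arrive at the same conclusion and ultimately invoke the same reference.
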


The map $F$ is a smooth differential operator, so it is a tame map. Now to
apply the Nash-Moser Theorem to the map \ $F$ (\ref{mapF}) we thus need to
show that the derivative $DF\left( \chi \right) :\mathcal{F}\longrightarrow 
\mathcal{H}$ is an isomorphism for all $\chi \in \mathcal{U}$, and moreover
that its inverse is smooth tame. Again, the proofs of these facts are almost
exactly the same as in \cite[Proposition 4.2]{BryantXu}.

\begin{lemma}[{\protect\cite[Lemmas 4.3 and 4.4]{BryantXu}}]
\label{lemmapfiso}Given the map $F$ (\ref{mapF}), its derivative 
\begin{equation*}
DF\left( \chi \right) :\mathcal{F}\longrightarrow \mathcal{H}
\end{equation*}%
is an isomorphism for all $\chi \in \mathcal{U}.$
\end{lemma}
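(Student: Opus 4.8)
The plan is to follow Bryant and Xu \cite[Lemmas 4.3 and 4.4]{BryantXu}, adapting their argument from exact $3$-forms to exact $4$-forms. The point is that, although the linearization of the underlying flow at a \emph{general} $4$-form is only degenerate-parabolic (the reason the Nash--Moser framework is needed), on a \emph{closed} $4$-form the DeTurck term built into $F$ turns it into a strictly parabolic operator whose principal part is the Hodge Laplacian, and this is all that is required since $\mathcal F$ consists of exact $4$-forms. Concretely, for a tangent vector $\eta\in\mathcal F$ at $\chi\in\mathcal U$, write $\psi=\psi_0+\chi$; the second slot of $DF(\chi)(\eta)$ is $\eta|_{t=0}$, and the first is $\tfrac{d}{dt}\eta-D_\chi\big(\Delta_\psi\psi+2d((A-\func{Tr}T_\psi)\ast_\psi\psi)+\mathcal L_{V(\chi)}\psi\big)(\eta)$. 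Since $\eta$ is closed, Theorem~\ref{thmmodcoflow} gives $D_\psi\hat Q_\psi(\eta)=-\Delta_\psi\eta-\mathcal L_{V(\eta)}\psi+d\hat F(\eta)$, while differentiating $\chi\mapsto\mathcal L_{V(\chi)}\psi$ contributes $\mathcal L_{V(\eta)}\psi$ plus terms of order at most one in $\eta$; the two copies of $\mathcal L_{V(\eta)}\psi$ cancel, leaving
\begin{equation*}
DF(\chi)(\eta)=\Big(\tfrac{d}{dt}\eta+\Delta_\psi\eta+\mathcal B_\chi(\eta),\ \eta|_{t=0}\Big),
\end{equation*}
where $\mathcal B_\chi$ is a linear differential operator of order $\le 1$ with smooth time-dependent coefficients which, applied to a closed form, produces an exact form (its lower-order part is a sum of $d$ of algebraic expressions and of $\mathcal L_{V(\chi)}$). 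Because $\psi_0+\chi(t)$ is a positive $4$-form for every $t\in[0,T]$, the spatial operator $\Delta_\psi+\mathcal B_\chi$ is uniformly elliptic of Laplace type, so $\tfrac{d}{dt}+\Delta_\psi+\mathcal B_\chi$ is strictly forward-parabolic.

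Injectivity of $DF(\chi)$ is then an energy estimate. If $DF(\chi)(\eta)=0$ then $\eta$ is exact with $\eta|_{t=0}=0$ and $\tfrac{d}{dt}\eta=-\Delta_\psi\eta-\mathcal B_\chi(\eta)$; pairing with $\eta$, using $\int_M\langle\Delta_\psi\eta,\eta\rangle\,\mathrm{vol}=\|d\eta\|_{L^2}^2+\|d^{\ast}\eta\|_{L^2}^2$, the standard elliptic estimate $\|\nabla\eta\|_{L^2}^2\le C(\|d\eta\|_{L^2}^2+\|d^{\ast}\eta\|_{L^2}^2+\|\eta\|_{L^2}^2)$ for forms on a compact manifold, and Young's inequality to absorb the first-order term, one gets $\tfrac{d}{dt}\int_M|\eta|^2\,\mathrm{vol}\le C\int_M|\eta|^2\,\mathrm{vol}$; Gr\"onwall's inequality and $\eta(0)=0$ force $\eta\equiv 0$ on $[0,T]$.

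For surjectivity, given $(\zeta,\zeta_0)\in\mathcal H$ I would first extend $\Delta_\psi+\mathcal B_\chi$ to a Laplace-type operator $\widetilde L_\chi$ on all sections of $\Lambda^4M$ that agrees with it on closed forms and satisfies $d\,\widetilde L_\chi=\widetilde L'_\chi\,d$ for a Laplace-type operator $\widetilde L'_\chi$ on $\Lambda^5M$ — possible because the principal part is the Hodge Laplacian, which commutes with $d$, and the lower-order part is manufactured from $d$ of algebraic expressions and from $\mathcal L_{V(\chi)}$. Standard linear parabolic theory on the bundle $\Lambda^4M$ (smooth coefficients, uniform ellipticity on $[0,T]$) then yields a unique smooth solution $\eta$ of $\tfrac{d}{dt}\eta+\widetilde L_\chi\eta=\zeta$, $\eta|_{t=0}=\zeta_0$. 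Applying $d$ and using $d\zeta=0$, $d\zeta_0=0$, the $5$-form $d\eta$ solves a homogeneous forward-parabolic initial value problem with zero data, so $d\eta\equiv0$ by the uniqueness just established; hence $\eta$ is closed and solves the genuine equation with $\mathcal B_\chi$ in place of $\widetilde L_\chi$. Finally, for closed $\eta$ each of $\Delta_\psi\eta=dd^{\ast}\eta$, $\mathcal B_\chi(\eta)$ and $\zeta$ is exact, so $\tfrac{d}{dt}[\eta(t)]=0$ in $H^4(M,\mathbb R)$ and $[\eta(t)]=[\zeta_0]=0$, i.e. $\eta(t)\in\mathcal F$. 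With injectivity, this makes $DF(\chi)$ a continuous linear bijection $\mathcal F\to\mathcal H$; continuity of the inverse follows from the same parabolic a priori estimates, whose quantitative, tame form is the content of the following lemma.

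The step I expect to require the most care is setting up the extension $\widetilde L_\chi$ correctly: one must verify, via Theorem~\ref{thmmodcoflow}, that after the DeTurck cancellation the full linearization genuinely reduces on closed $\eta$ to $\Delta_\psi\eta$ plus a bona fide first-order operator whose non-principal terms are exact, and that the extension can be chosen so as to intertwine with $d$ while keeping its ellipticity and coefficient bounds uniform in $t\in[0,T]$ and over the admissible class of $\chi$ — the last point being exactly what is needed for the inverse to be a tame map in the subsequent step.
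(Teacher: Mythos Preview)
Your proposal is correct and arrives at the same linearized operator as the paper, namely
\[
DF(\chi)(\eta)=\Big(\tfrac{d}{dt}\eta+\Delta_{\psi}\eta+d\hat F(\eta),\ \eta|_{t=0}\Big),
\]
with $\hat F$ algebraic in $\eta$; your $\mathcal B_\chi$ is exactly $d\hat F$. Your injectivity argument via an energy estimate and Gr\"onwall is a fleshed-out version of the paper's one-line appeal to uniqueness for linear parabolic equations (just remember the metric and volume form are time-dependent, which only adds further zeroth-order terms).

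The genuine difference is in surjectivity. The paper does \emph{not} extend the operator to all of $\Lambda^4$ and then argue the solution is closed and exact; instead it passes to potentials. Writing $\theta=d\alpha$, $\eta=d\beta$, $\theta_0=d\alpha_0$, one solves the strictly parabolic problem
\[
\tfrac{d}{dt}\alpha+\Delta_{\psi}\alpha+\hat F(d\alpha)=\beta,\qquad \alpha|_{t=0}=\alpha_0
\]
on all of $\Lambda^3$ (principal part $\Delta_\psi$, lower-order term first order), and then $\theta=d\alpha$ solves the original system and is exact by construction. This completely avoids the extension/intertwining step you flag as delicate. Conversely, your route does work, and in fact the extension you are worried about is trivial here: since the lower-order term is literally $d\hat F(\eta)$, one may take $\widetilde L_\chi=\Delta_\psi+d\hat F$ on all $4$-forms, and then $d\widetilde L_\chi=\Delta_\psi\,d$ because $d\circ d\hat F=0$, so $\widetilde L'_\chi=\Delta_\psi$ on $\Lambda^5$. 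Either way the proof goes through; the paper's potential-level argument is simply shorter and makes the exactness of the solution automatic rather than something to be checked a posteriori.
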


\begin{proof}
From Theorem \ref{thmmodcoflow}, we get that for closed $\theta $, the
derivative of the map $F$ is given by 
\begin{equation}
DF\left( \chi \right) \theta =\left( \frac{d}{dt}\theta +\Delta _{\psi
}\theta +d\hat{F}\left( \theta \right) ,\left. \theta \right\vert
_{t=0}\right)  \label{DFmap}
\end{equation}%
where $\psi =\psi _{0}+\chi $. To show injectivity we need to show that the
initial value problem for a closed $4$-form $\theta $ 
\begin{equation}
\left\{ 
\begin{array}{c}
\frac{d}{dt}\theta +\Delta _{\psi }\theta +d\hat{F}\left( \theta \right) =0
\\ 
\left. \theta \right\vert _{t=0}=0%
\end{array}%
\right.
\end{equation}%
has a unique solution $\theta =0.$ However this is a linear parabolic PDE,
and thus $\theta =0$ is the unique solution.

To show surjectivity, we have to show that for any time-dependent exact $4$%
-form $\eta $ and any exact $4$-form $\theta _{0}$ there is a time-dependent 
$3$-form $\theta $ which satisfies%
\begin{equation}
\left\{ 
\begin{array}{c}
\frac{d}{dt}\theta +\Delta _{\psi }\theta +d\hat{F}\left( \theta \right)
=\eta \\ 
\left. \theta \right\vert _{t=0}=\theta _{0}%
\end{array}%
\right.  \label{DFsurjivp}
\end{equation}%
Since $\theta $ and $\eta $ are all exact, we can write 
\begin{eqnarray*}
\theta &=&d\alpha \\
\eta &=&d\beta
\end{eqnarray*}%
for time-dependent $3$-forms $\alpha $ and $\beta $. Correspondingly, $%
\theta _{0}=d\beta _{0}$. Now consider the following initial value problem:%
\begin{equation}
\left\{ 
\begin{array}{c}
\frac{d}{dt}\alpha +\Delta _{\psi }\alpha +\hat{F}\left( d\alpha \right)
=\beta \\ 
\left. \beta \right\vert _{t=0}=\beta _{0}%
\end{array}%
\right.
\end{equation}%
From standard parabolic theory, there exists a unique solution $\beta \left(
t\right) $. Hence $\theta =d\beta $ solves (\ref{DFsurjivp}).
\end{proof}

Now that we have that $DF\left( \chi \right) $ is an isomorphism for each $%
\chi \in \mathcal{U}$, we can define the family of inverse maps 
\begin{eqnarray*}
\left( DF\right) ^{-1} &:&\mathcal{U}\times \mathcal{H}\longrightarrow 
\mathcal{F} \\
\left( \chi ,\theta ,\theta _{0}\right) &\longrightarrow &\left( DF\left(
\chi \right) \right) ^{-1}\left( \theta ,\theta _{0}\right)
\end{eqnarray*}

\begin{lemma}
The map $\left( DF\right) ^{-1}:\mathcal{U}\times \mathcal{H}\longrightarrow 
\mathcal{F}\ $is smooth tame.
\end{lemma}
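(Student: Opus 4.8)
The plan is to follow the argument of Bryant and Xu \cite[proof of Lemma 4.4]{BryantXu} essentially line by line, replacing their exact $2$- and $3$-forms by exact $3$- and $4$-forms and their linearized Laplacian flow operator by the operator $\frac{d}{dt}+\Delta_{\psi}+d\hat{F}$ from Theorem \ref{thmmodcoflow}; since the structural input is unchanged, the estimates transfer. Concretely, given $(\chi,\theta,\theta_{0})\in\mathcal{U}\times\mathcal{H}$, Lemma \ref{lemmapfiso} says that $\xi=\left(DF(\chi)\right)^{-1}(\theta,\theta_{0})$ is the unique exact time-dependent $4$-form solving $\frac{d}{dt}\xi+\Delta_{\psi}\xi+d\hat{F}(\xi)=\theta$, $\left.\xi\right\vert_{t=0}=\theta_{0}$, with $\psi=\psi_{0}+\chi$. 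First I would make the reduction to a genuinely parabolic problem already used in that lemma: write $\xi=d\alpha$, $\theta=d\beta$, $\theta_{0}=d\beta_{0}$ for time-dependent $3$-forms, so that $\alpha$ is recovered from the strictly parabolic initial value problem $\frac{d}{dt}\alpha+\Delta_{\psi}\alpha+\hat{F}(d\alpha)=\beta$, $\left.\alpha\right\vert_{t=0}=\beta_{0}$. This is the step where the weak parabolicity established in Theorem \ref{thmmodcoflow} is used: the principal symbol of $\Delta_{\psi}$ acting on \emph{all} $3$-forms is $\left\vert\xi\right\vert^{2}$ times the identity, so the degeneracy of $D_{\psi}\hat{Q}_{\psi}$ lives entirely on the image of $d$ and disappears once we pass to the potential $\alpha$.

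The second step is to establish the tame estimate. Applying the standard linear parabolic $L^{2}$ energy estimates (or parabolic Schauder estimates) to the equation for $\alpha$, and tracking the dependence of the coefficients of $\Delta_{\psi}$ and of $\hat{F}$ on $\chi$ --- these coefficients are algebraic in $g_{\psi}$, $g_{\psi}^{-1}$, $\varphi_{\psi}$, $T_{\psi}$ and finitely many of their covariant derivatives, all of which are tame functions of $\chi$ by Proposition \ref{propflow} and the explicit formula for $\Delta_{\psi}\psi$ --- one obtains for each $n$ a bound of the schematic shape
\[
\left\Vert\alpha\right\Vert_{n+2}\leq C(n)\Big(\left\vert\beta\right\vert_{n}+\left\vert\beta_{0}\right\vert_{n+2}+P_{n}\big(\left\Vert\chi\right\Vert_{n+2}\big)\big(\left\vert\beta\right\vert_{0}+\left\vert\beta_{0}\right\vert_{2}\big)\Big),
\]
in the time-graded norms (\ref{timegrading}) that count one time derivative as two space derivatives, with $P_{n}$ a polynomial. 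Because $d$ is a tame linear map and, by the Hodge-theoretic splittings of $\mathcal{F}$ and $\mathcal{G}$ established in \cite[Proposition 4.2]{BryantXu}, the assignments $\theta\mapsto\beta$ and $\theta_{0}\mapsto\beta_{0}$ can be chosen tame linear, this descends to a tame estimate for $\xi=d\alpha$, i.e. $\left\Vert\left(DF(\chi)\right)^{-1}(\theta,\theta_{0})\right\Vert_{n}\leq C(n)\big(1+\left\Vert\chi\right\Vert_{n+r}+\left\Vert\theta\right\Vert_{n+r}+\left\vert\theta_{0}\right\vert_{n+r}\big)$ for a fixed $r$, which is exactly the required tame estimate for $(DF)^{-1}:\mathcal{U}\times\mathcal{H}\to\mathcal{F}$.

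The third step is smoothness: I would differentiate the identity $DF(\chi)\,\xi=(\theta,\theta_{0})$ in all of $\chi$, $\theta$, $\theta_{0}$. Each Fr\'{e}chet derivative of $(DF)^{-1}$ is then seen to solve a linear initial value problem with the same principal part $\frac{d}{dt}+\Delta_{\psi}$ and a source term which is a multilinear algebraic expression in lower-order derivatives of $\xi$ and of the data; after the same $\xi=d\alpha$ reduction each such problem is strictly parabolic, so the estimate of the previous paragraph applies to it and every derivative of $(DF)^{-1}$ is tame. Hence $(DF)^{-1}$ is smooth tame, and, combined with Lemma \ref{lemmapfiso}, the hypotheses of the Nash--Moser theorem (Theorem \ref{ThmNashMoser}) are met for $F$.

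The main obstacle is the bookkeeping in the second step: one must verify that the coefficients of the linear operator depend on $\chi$ in a genuinely tame, polynomially controlled way, and that passing through the potential $\alpha$ and back up via $d$ costs only a fixed number $r$ of derivatives, so that no unbounded loss of regularity creeps in, uniformly over $\chi\in\mathcal{U}$. Granting the corresponding facts from \cite{BryantXu}, which are insensitive to the substitution of $4$-forms for $3$-forms, the conclusion follows.
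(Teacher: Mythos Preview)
Your proposal is correct and follows essentially the same approach as the paper: both identify $\left(DF(\chi)\right)^{-1}(\theta,\theta_{0})$ as the unique solution of the linear parabolic initial value problem $\frac{d}{dt}\theta+\Delta_{\psi}\theta+d\hat{F}(\theta)=\eta$, $\theta(0)=\theta_{0}$, and then invoke standard linear parabolic theory together with the argument of Bryant--Xu (the paper cites their Lemma 4.7) to conclude smooth tameness. You have simply spelled out in more detail the three steps (reduction to the potential $\alpha$, the tame energy estimate, and differentiation for smoothness) that the paper compresses into a single reference.
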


\begin{proof}
From (\ref{lemmapfiso}) we know that $\theta =\left( DF\right) ^{-1}\left(
\chi ,\eta ,\theta _{0}\right) $ is the unique solution to the linear
parabolic equation%
\begin{equation}
\left\{ 
\begin{array}{c}
\frac{d}{dt}\theta +\Delta _{\psi }\theta +d\hat{F}\left( \theta \right)
=\eta \\ 
\left. \theta \right\vert _{t=0}=\theta _{0}%
\end{array}%
\right.
\end{equation}%
with $\psi =\psi _{0}+\chi $. It then follows from standard linear parabolic
theory that $\left( DF\right) ^{-1}$ is smooth tame. For details we refer
the reader to \cite[Lemma 4.7]{BryantXu}, where exactly the same result is
proven for $3$-forms. It applies in the same way to $4$-forms.
\end{proof}

Now we can apply the Nash Moser theorem to the map $F$.

\begin{lemma}[{\protect\cite[Lemma 0.2]{BryantXu}}]
\label{lem02}Suppose $\chi \in \mathcal{U}$ is a solution to 
\begin{equation}
\left\{ 
\begin{array}{c}
\frac{d}{dt}\chi -\Delta _{\psi }\psi -2d\left( \left( A-\func{Tr}T_{\psi
}\right) \ast _{\psi }\psi \right) -\mathcal{L}_{V\left( \chi \right) }\psi
=\eta \\ 
\left. \chi \right\vert _{t=0}=\chi _{0}%
\end{array}%
\right.
\end{equation}%
Then for $\left( \bar{\eta},\bar{\chi}_{0}\right) \in \mathcal{H}$
sufficiently close to $\left( \eta ,\chi _{0}\right) $, there is a unique
solution $\bar{\chi}\left( t\right) $ to the system 
\begin{equation}
\left\{ 
\begin{array}{c}
\frac{d}{dt}\bar{\chi}-\Delta _{\bar{\psi}}\bar{\psi}-2d\left( \left( A-%
\func{Tr}T_{\bar{\psi}}\right) \ast _{\bar{\psi}}\bar{\psi}\right) -\mathcal{%
L}_{V\left( \bar{\chi}\right) }\bar{\psi}=\bar{\eta} \\ 
\left. \bar{\chi}\right\vert _{t=0}=\bar{\chi}_{0}%
\end{array}%
\right.  \label{chibarsys}
\end{equation}%
where $\bar{\psi}=\psi _{0}+\bar{\chi}$.
\end{lemma}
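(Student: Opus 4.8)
The plan is to deduce this statement as a direct application of the Nash-Moser Inverse Function Theorem (Theorem \ref{ThmNashMoser}) to the map $F:\mathcal{U}\longrightarrow \mathcal{H}$ defined in (\ref{mapF}). All the required ingredients have already been assembled in the excerpt: the spaces $\mathcal{F}$, $\mathcal{G}$, and $\mathcal{H}=\mathcal{F}\times\mathcal{G}$ are tame Fr\'{e}chet spaces; $F$ is a smooth differential operator and hence a smooth tame map; by Lemma \ref{lemmapfiso} the derivative $DF(\chi):\mathcal{F}\longrightarrow\mathcal{H}$ is a linear isomorphism for every $\chi\in\mathcal{U}$; and the family of inverses $(DF)^{-1}:\mathcal{U}\times\mathcal{H}\longrightarrow\mathcal{F}$ is smooth tame. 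These are precisely hypotheses (1) and (2) of Theorem \ref{ThmNashMoser}.

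First I would observe that the system in the hypothesis of the lemma says exactly that $F(\chi)=(\eta,\chi_0)$, using the definition (\ref{mapF}) of $F$ together with the identity $\ast_\psi\psi=\varphi$ and the expression for $DF$ from (\ref{DFmap}). Then, since all the hypotheses of the Nash-Moser theorem are verified, $F$ is locally invertible near $\chi$, and its local inverse is a smooth tame map. In particular there is an open neighbourhood $\mathcal{W}$ of $(\eta,\chi_0)$ in $\mathcal{H}$ and an open neighbourhood of $\chi$ in $\mathcal{U}$ on which $F$ restricts to a diffeomorphism. Therefore, for any $(\bar\eta,\bar\chi_0)\in\mathcal{W}$ — i.e.\ sufficiently close to $(\eta,\chi_0)$ — there exists a unique $\bar\chi$ in that neighbourhood of $\chi$ with $F(\bar\chi)=(\bar\eta,\bar\chi_0)$, which unwinds to the system (\ref{chibarsys}) with $\bar\psi=\psi_0+\bar\chi$.

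The main subtlety — really the only place where any work beyond citing the theorem is needed — is the matter of the time interval. A solution $\bar\chi$ produced this way lives in $\mathcal{F}=dC^\infty([0,T]\times M,\Lambda^3(M))$ for the \emph{same} fixed $T$ as the given solution $\chi$, because the local inverse furnished by Nash-Moser maps into a neighbourhood in $\mathcal{U}$. So the statement is genuinely a statement about perturbing within a fixed time interval, and no shrinking of $T$ is involved; I would make this explicit. One should also note that the constraint ``$\chi$ exact'' is built into the definition of $\mathcal{F}$ and $\mathcal{G}$, so the output $\bar\chi$ is automatically exact, and $\psi_0+\bar\chi$ automatically closed; positivity of $\psi_0+\bar\chi$ as a $4$-form holds because $\bar\chi$ lies in the open set $\mathcal{U}$. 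This mirrors \cite[Lemma 0.2]{BryantXu} verbatim, the only difference being that we work with exact $4$-forms and the operator $\hat Q_\psi$ from Theorem \ref{thmmodcoflow} in place of the Laplacian flow operator for $3$-forms; the argument transfers without change.
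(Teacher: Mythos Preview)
Your proof is correct and follows exactly the same approach as the paper: apply the Nash--Moser Inverse Function Theorem to the map $F$ of (\ref{mapF}), using the previously established facts that $DF(\chi)$ is an isomorphism and $(DF)^{-1}$ is smooth tame, to conclude local invertibility and hence existence and uniqueness of $\bar\chi=F^{-1}(\bar\eta,\bar\chi_0)$. Your additional remarks on the fixed time interval, exactness, and positivity are accurate elaborations but go beyond what the paper records.
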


\begin{proof}
The derivative $DF$ of the map $F$ (\ref{mapF}) satisfies the conditions of
Theorem \ref{ThmNashMoser}, and hence $F$ itself is locally invertible.
Therefore, given $\left( \bar{\eta},\bar{\chi}_{0}\right) \in \mathcal{H}$
sufficiently close to $\left( \eta ,\chi _{0}\right) $, there exists a
solution $\bar{\chi}=F^{-1}\left( \bar{\eta},\bar{\chi}_{0}\right) $.
\end{proof}

It is now straightforward to obtain short-time existence and uniqueness of
solutions to the gauge-fixed flow (\ref{ivp3}).

\begin{corollary}
\label{corrivp3sol}The initial value problem (\ref{ivp3}) has a unique
solution for the time period $\left[ 0,\varepsilon \right] $ for some $%
\varepsilon >0$.
\end{corollary}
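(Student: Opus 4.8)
The plan is to combine Lemma~\ref{lem02} --- the Nash--Moser local inversion of the map $F$ of (\ref{mapF}) --- with the construction of a smooth \emph{approximate solution} of (\ref{ivp3}), following the scheme of Bryant and Xu \cite{BryantXu} (itself modelled on Hamilton \cite{Hamilton3folds}). Since $\chi\equiv 0$ is in general not a solution (it is one precisely when $\psi_{0}$ is torsion-free), one first needs a point of $\mathcal{U}$ on which $F$ takes a value close to $(0,0)$.

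First I would build a \emph{formal power series} $\hat\chi(t)=\sum_{k\ge 1}\chi_{k}\,t^{k}/k!$ solving (\ref{ivp3}) to infinite order at $t=0$, with $\hat\chi|_{t=0}=0$. Differentiating the equation $k-1$ times in $t$ and evaluating at $t=0$ expresses $\chi_{k}$ as a universal polynomial differential expression in $\psi_{0},\chi_{1},\dots,\chi_{k-1}$ and their spatial derivatives --- using the algebraic formulae of Proposition~\ref{propflow} for the $\psi$-dependence of $g_{\psi}$ and the definition of $T_{\psi}$ --- so the $\chi_{k}$ are determined recursively and explicitly. Since the right-hand side of (\ref{ivp3}) is exact for closed $\psi$ (each of $\Delta_{\psi}\psi=dd^{\ast}\psi$, the term $2d(\cdots)$, and $\mathcal{L}_{V(\chi)}\psi=d(V(\chi)\lrcorner\psi)$ is exact), every $\chi_{k}$ is exact, so $\hat\chi$ is a formal family of exact $4$-forms. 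By Borel's lemma there is a genuine $\hat\chi\in dC^{\infty}([0,T]\times M,\Lambda^{3}(M))$ with these Taylor coefficients at $t=0$, and after shrinking $T$ we keep $\psi_{0}+\hat\chi(t)$ a definite $4$-form, so $\hat\chi\in\mathcal{U}$. By construction the error
\[
\eta:=\frac{d}{dt}\hat\chi-\Delta_{\hat\psi}\hat\psi-2d\bigl((A-\func{Tr}T_{\hat\psi})\ast_{\hat\psi}\hat\psi\bigr)-\mathcal{L}_{V(\hat\chi)}\hat\psi
\]
vanishes to infinite order at $t=0$, so $F(\hat\chi)=(\eta,0)$ with $\eta$ flat at $t=0$.

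Next I would apply Lemma~\ref{lem02} with this $\hat\chi$, inhomogeneity $\eta$ and initial value $0$: for every $(\bar\eta,\bar\chi_{0})\in\mathcal{H}$ close enough to $(\eta,0)$ the corresponding system has a unique solution in $\mathcal{U}$. Since $\eta$ is flat at $t=0$, for each $n$ and each $m$ one has $|\eta(t)|_{H^{n}}\le C_{n,m}\,t^{m}$, so the restriction of $\eta$ to a short interval $[0,\varepsilon]$ has arbitrarily small norm $\|\eta\|_{n}$ in the grading (\ref{timegrading}); hence for $\varepsilon>0$ small enough the pair $(0,0)$ lies in the admissible neighbourhood, and we obtain $\chi\in\mathcal{U}$ on $[0,\varepsilon]$ with $F(\chi)=(0,0)$, that is, a solution of (\ref{ivp3}) with $\chi|_{t=0}=0$. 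For uniqueness among all solutions of (\ref{ivp3}) on $[0,\varepsilon]$ I would invoke parabolicity: by Theorem~\ref{thmmodcoflow} (see the computation of $DF$ in Lemma~\ref{lemmapfiso}) the gauge-fixed flow is strictly parabolic in the direction of closed forms, with linearized operator $\theta\mapsto\frac{d}{dt}\theta+\Delta_{\psi}\theta+d\hat F(\theta)$ whose leading part $\frac{d}{dt}+\Delta_{\psi}$ is parabolic; the difference of two solutions with the same initial data then satisfies a linear parabolic equation with zero initial data and so vanishes by a standard energy/Gronwall estimate (equivalently, one uses the single-valuedness of the local inverse $F^{-1}$ of Lemma~\ref{lem02}, any solution lying in $\mathcal{U}$ and, on a short interval, close to $\hat\chi$).

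The step I expect to be the main obstacle is the Nash--Moser bookkeeping carried out in \cite[Section~4]{BryantXu}: one must verify that $F$ and $(DF)^{-1}$ remain smooth tame with constants under which the admissible neighbourhood produced by Lemma~\ref{lem02} does not shrink faster than $\|\eta\|_{n}$ does as $\varepsilon\to 0$, so that $(0,0)$ does eventually fall inside it. This transfers verbatim from the $3$-form setting of \cite{BryantXu} to the present $4$-form flow, and everything else --- the recursion for the $\chi_{k}$, the Borel summation, and the parabolic uniqueness --- is routine.
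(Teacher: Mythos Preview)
Your proposal follows the same overall strategy as the paper --- build a formal solution so that the error $\eta$ is flat at $t=0$, then invoke the Nash--Moser inverse from Lemma~\ref{lem02} --- but differs in one technical point. To bring the target $(0,0)$ into the Nash--Moser neighbourhood of $(\eta,0)$, you restrict to shorter intervals $[0,\varepsilon]$ and argue that $\|\eta|_{[0,\varepsilon]}\|_n\to 0$; this is correct, but it forces you to track how the neighbourhood produced by Nash--Moser depends on $\varepsilon$, exactly the bookkeeping obstacle you flag at the end. The paper (following Hamilton) instead uses the \emph{translation trick}: keep the time interval $[0,T]$ and the base point $\hat\chi$ fixed, extend $\eta$ by zero for $t<0$ (smoothly, since $\eta$ is flat at $0$), and set $\bar\eta(t)=\eta(t-\varepsilon)$. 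Then $\bar\eta\to\eta$ in every seminorm on the \emph{fixed} space $\mathcal{F}$ as $\varepsilon\to 0$, so for small $\varepsilon$ the pair $(\bar\eta,0)$ lands in the fixed Nash--Moser neighbourhood; the resulting solution $\bar\chi$ then solves the homogeneous problem on $[0,\varepsilon]$ because $\bar\eta$ vanishes there. This sidesteps your bookkeeping concern entirely, so your claim that the issue ``transfers verbatim'' from \cite{BryantXu} is slightly off: what transfers is the translation trick itself, which makes the issue disappear rather than resolving it by estimates.

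For uniqueness, the paper simply cites the uniqueness clause of Lemma~\ref{lem02} (single-valuedness of the local inverse), which you also mention as an alternative; your parabolic-energy argument on the difference of two solutions is also fine.
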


\begin{proof}
Here we apply the same method as used in \cite{BryantXu}, and originally by
Hamilton in \cite{Hamilton3folds}. Let $\chi \left( t\right) $ be a family
of $4$-forms such that its formal Taylor series at $t=0$ is what it must be
to solve (\ref{ivp3}) with $\chi \left( 0\right) =\chi _{0}$. This can be
done by differentiating through the flow equation (\ref{ivp3}) and solving
for $\left. \frac{d^{k}\chi }{dt^{k}}\right\vert _{t=0}$. Then let 
\begin{equation}
\eta \left( t\right) =\frac{d}{dt}\chi -\Delta _{\psi }\psi -2d\left( \left(
A-\func{Tr}T_{\psi }\right) \ast _{\psi }\psi \right) -\mathcal{L}_{V\left(
\chi \right) }\psi
\end{equation}%
where $\psi =\psi _{0}+\chi $. It follows that the formal Taylor series of $%
\eta $ at $t=0$ is identically zero. We then extend $\eta \left( t\right) $
such that $\eta \left( t\right) =0$ for $t<0$. Now define $\bar{\eta}$ to be
the translation of $\eta $ by some $\varepsilon >0$%
\begin{equation*}
\bar{\eta}\left( t\right) =\eta \left( t-\varepsilon \right) .
\end{equation*}%
Thus, we get a $4$-form $\bar{\eta}\left( t\right) $ which vanishes for $%
t\in \left[ 0,\varepsilon \right] $ for some $\varepsilon >0$. We can then
apply Lemma \ref{lem02} to the two pairs $\left( \bar{\eta},\chi _{0}\right) 
$ and $\left( \eta ,\chi _{0}\right) $, possibly for a shorter time period $%
\left[ 0,\varepsilon ^{\prime }\right] $. This then gives existence of a
unique solution $\bar{\chi}\left( t\right) $ to the system (\ref{chibarsys}%
). However, since $\bar{\eta}$ vanishes up to time $\varepsilon ^{\prime }$,
we get a unique solution to (\ref{ivp3}) for $t\in \left[ 0,\varepsilon
^{\prime }\right] $.
\end{proof}

Following \cite{BryantXu}, to prove short-time existence and uniqueness for
the initial value problem (\ref{ivp2}), we need to relate (\ref{ivp3}) and (%
\ref{ivp2}) via diffeomorphisms. Let $\psi \left( t\right) $ be a family of $%
G_{2}$-structure $4$-forms, and $\phi \left( t\right) $ a family of
diffeomorphisms defined by the evolution equation 
\begin{equation}
\left\{ 
\begin{array}{c}
\frac{d}{dt}\phi \left( t\right) =-V\left( \bar{\chi}_{\phi }\left( t\right)
\right) \\ 
\left. \phi \right\vert _{t=0}=Id%
\end{array}%
\right.  \label{ddtdiffeo}
\end{equation}%
where $\bar{\chi}_{\phi }\left( t\right) =\left( \phi ^{-1}\right) ^{\ast
}\psi -\psi _{0}$ and $V$ is given by (\ref{Vchi}).

\begin{lemma}
\label{diffeoflowpara}The flow (\ref{ddtdiffeo}) is strictly parabolic.
\end{lemma}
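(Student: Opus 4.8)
The plan is to read (\ref{ddtdiffeo}) as a nonlinear evolution equation for the single unknown diffeomorphism $\phi$, with $\psi$ regarded as a fixed given $G_{2}$-structure, and to show that the right-hand side $\phi\mapsto -V\!\left( \bar{\chi}_{\phi}\right) $ is a second-order operator whose linearization has negative-definite principal symbol on the whole tangent space. First I would observe that the operator is genuinely second order in $\phi$: the $4$-form $\bar{\chi}_{\phi}=\left( \phi ^{-1}\right) ^{\ast }\psi -\psi _{0}$, the metric $\left( \phi ^{-1}\right) ^{\ast }g_{\psi }$ of the associated $G_{2}$-structure $\left( \phi ^{-1}\right) ^{\ast }\psi $, and the pair $\left( X,h\right) $ obtained by decomposing $\bar{\chi}_{\phi}$ with respect to that structure, all depend on $\phi$ through its first derivatives, and $V\left( \chi \right) =\tfrac{3}{4}\nabla \func{Tr}h-2\func{curl}X$ applies one further derivative. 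Since parabolicity concerns only the principal symbol, and the principal part depends on $\phi$ only through $g=\left( \phi ^{-1}\right) ^{\ast }g_{\psi }$, by naturality of the metric under diffeomorphisms it suffices to linearize at $\phi =\mathrm{Id}$.

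The key input is the identity of the previous section, (\ref{vbar3}): for closed $\chi$ one has $V\left( \chi \right) ^{b}=\bar{V}^{b}+U^{b}\left( h,X\right) +O\!\left( \left\vert \chi \right\vert ^{2}\right) $, where $\bar{V}^{b}=g^{ac}\mathcal{T}_{a\ c}^{\ b}$ is the DeTurck vector field (\ref{vbar}) built from the Christoffel difference (\ref{gammabar}) between $g$ and the fixed background $g_{0}=g_{\psi _{0}}$, and $U$ is a derivative-free, torsion-linear remainder. Applied with $\chi =\bar{\chi}_{\phi}$ (which is closed, since $\psi$ and $\psi _{0}$ are), this shows that the right-hand side of (\ref{ddtdiffeo}) equals $-\bar{V}[\phi]$ modulo terms that are at most first order in $\phi$ and hence irrelevant to the symbol. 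Equivalently, $-\bar{V}[\phi]$ is the tension field of $\phi ^{-1}$ viewed as a map $\left( M,g_{\psi }\right) \longrightarrow \left( M,g_{0}\right) $, so (\ref{ddtdiffeo}) is, to leading order, the harmonic-map heat flow — classically strictly parabolic.

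To see this concretely I would linearize $\bar{V}[\phi]$ at $\phi =\mathrm{Id}$: with $\tfrac{d}{ds}\phi _{s}|_{s=0}=W$ one gets $\tfrac{d}{ds}g|_{s=0}=-\mathcal{L}_{W}g_{\psi }$, and substituting into (\ref{gammabar})--(\ref{vbar}) — the same computation that produces (\ref{ricbar}) in the Ricci-flow case — gives, to highest order, $\tfrac{d}{ds}\bar{V}[\phi _{s}]^{b}|_{s=0}=-\nabla ^{2}W^{b}$ plus lower-order terms, the negative rough Laplacian. Hence the linearization of the right-hand side of (\ref{ddtdiffeo}) is $W\mapsto \nabla ^{2}W$ plus lower-order terms, with principal symbol $-\left\vert \xi \right\vert _{g}^{2}\,\mathrm{Id}$ on $T_{x}M$, which is negative-definite on the entire tangent space; thus (\ref{ddtdiffeo}) is strictly, indeed uniformly, parabolic. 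A self-contained variant avoiding (\ref{vbar3}) would linearize directly, using $\tfrac{d}{ds}\bar{\chi}_{\phi _{s}}|_{s=0}=-\mathcal{L}_{W}\psi =-d\left( W\lrcorner \psi \right) $ together with the decomposition (\ref{dvpsidecomp}) to read off $X\propto \func{curl}W$ and $h\propto \nabla _{(m}W_{n)}-\tfrac{1}{3}(\func{div}W)g_{mn}$ to leading order, and then (\ref{phiphi1}) to evaluate the symbols of $\func{curl}\circ \func{curl}$ — equal to $\left\vert \xi \right\vert ^{2}\mathrm{Id}-\xi ^{\sharp }\otimes \xi ^{\sharp }$, the $\psi $-term vanishing by antisymmetry in $\xi $ — and of $\nabla \circ \func{div}$, equal to $\xi ^{\sharp }\otimes \xi ^{\sharp }$; the coefficients $\tfrac{3}{4}$ and $2$ in the definition of $V$ are exactly those for which the $\xi ^{\sharp }\otimes \xi ^{\sharp }$ contributions cancel and only $\left\vert \xi \right\vert ^{2}\mathrm{Id}$ survives.

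The computation is routine; the genuine content — that $V\left( \chi \right) $ agrees to leading order with the DeTurck vector field — has already been established in (\ref{vbar3}), so the only obstacle is the bookkeeping confirming that the off-diagonal ($\xi ^{\sharp }\otimes \xi ^{\sharp }$) symbol terms cancel. That cancellation is forced rather than accidental: it is the same mechanism that produced the clean linearization $-\Delta _{\psi }\chi -\mathcal{L}_{V\left( \chi \right) }\psi +d\hat{F}\left( \chi \right) $ in Theorem \ref{thmmodcoflow}, $V$ having been chosen precisely so that adding $\mathcal{L}_{V\left( \chi \right) }\psi $ to the gauge-fixed flow and flowing the diffeomorphisms by $-V$ both yield honest Laplacians. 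This should be contrasted with the gauge-fixed flow (\ref{ivp3}), which is only \emph{weakly} parabolic because its symbol degenerates on $\Lambda _{1}^{4}$: here no such degeneracy occurs, since the symbol is $\left\vert \xi \right\vert ^{2}$ times the identity on the full tangent bundle.
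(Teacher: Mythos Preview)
Your ``self-contained variant'' is precisely the route the paper takes: linearize at $\phi=\mathrm{Id}$ by setting $\left.\frac{\partial\phi}{\partial s}\right|_{s=0}=U$, compute $\left.\frac{\partial\bar{\chi}}{\partial s}\right|_{s=0}=-\mathcal{L}_{U}\bar{\psi}=-d(U\lrcorner\bar{\psi})$, read off $\dot{X}$ and $\dot{h}$ from the decomposition (\ref{dvpsidecomp}), feed these into $V=\tfrac{3}{4}\nabla\func{Tr}h-2\func{curl}X$, and reduce $\func{curl}\circ\func{curl}$ via (\ref{phiphi1}). The paper's computation records the principal symbol as $\tfrac{1}{2}\left\vert\xi\right\vert^{2}U^{a}+\tfrac{1}{2}\xi^{a}\langle\xi,U\rangle$ rather than your $\left\vert\xi\right\vert^{2}U^{a}$, so it does \emph{not} exhibit the exact cancellation of the $\xi^{\sharp}\otimes\xi^{\sharp}$ terms that you predict; either way the quadratic form $\langle\sigma_{\xi}(D_{\phi}V)(U),U\rangle$ is strictly positive for $U\neq 0$, and strict parabolicity follows.

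Your primary argument via (\ref{vbar3}) --- identifying $V(\chi)$ with the DeTurck vector field to leading order and invoking the harmonic-map heat flow --- is a genuinely different and more conceptual route that the paper does not take in this lemma (though the identification is established in Section~\ref{SecLapPsi}). It has the advantage of explaining \emph{why} the symbol should be essentially the rough Laplacian, rather than discovering positivity by direct calculation; the paper's hands-on approach has the advantage of being fully self-contained within the $G_{2}$ formalism and not relying on the translation to Ricci-flow machinery.
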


\begin{proof}
We have to linearize (\ref{ddtdiffeo}). Suppose 
\begin{equation}
\left. \frac{\partial \phi }{\partial s}\right\vert _{s=0}=U
\end{equation}%
for some vector field $U$. Then, 
\begin{equation}
\left. \frac{\partial \phi ^{-1}}{\partial s}\right\vert _{s=0}=-\left( \phi
^{-1}\right) _{\ast }U
\end{equation}%
and hence, 
\begin{eqnarray}
\left. \frac{\partial \left( \phi ^{-1}\right) ^{\ast }\psi }{\partial s}%
\right\vert _{s=0} &=&\left( \phi ^{-1}\right) _{\ast }\mathcal{L}_{-\left(
\phi ^{-1}\right) _{\ast }U}\psi  \notag \\
&=&-\mathcal{L}_{U}\left( \phi ^{-1}\right) ^{\ast }\psi
\end{eqnarray}%
Now, define $\bar{\psi}=\left( \phi ^{-1}\right) ^{\ast }\psi $. This
defines now a new $G_{2}$-structure. So we have 
\begin{equation}
\left. \frac{\partial \bar{\psi}}{\partial s}\right\vert _{s=0}=-\mathcal{L}%
_{U}\bar{\psi}
\end{equation}%
So on the right hand side of (\ref{ddtdiffeo}), $\bar{\chi}=\bar{\psi}-\psi
_{0}$ and whence, 
\begin{equation}
-V\left( \bar{\chi}\right) =-\left( \frac{3}{4}\bar{\nabla}^{a}\func{Tr}%
h+\left( \overline{\func{curl}}X\right) ^{a}\right)
\end{equation}%
where everything is with respect to the $G_{2}$-structure $\bar{\psi}$, and $%
\bar{\chi}$ is given by%
\begin{equation*}
\bar{\chi}=\ast \left( X\lrcorner \bar{\psi}+3\mathrm{i}_{\bar{\varphi}%
}\left( h\right) \right) .
\end{equation*}%
Hence 
\begin{equation*}
\left. -\frac{\partial V\left( \bar{\chi}\right) }{\partial s}\right\vert
_{s=0}=-\left. \frac{\partial }{\partial s}\left( \frac{3}{4}\bar{\nabla}^{a}%
\func{Tr}h+\left( \overline{\func{curl}}X\right) ^{a}\right) \right\vert
_{s=0}
\end{equation*}%
Now, 
\begin{eqnarray*}
-\mathcal{L}_{U}\bar{\psi} &=&-d\left( U\lrcorner \bar{\psi}\right)
-U\lrcorner d\bar{\psi}. \\
&=&-d\left( U\lrcorner \bar{\psi}\right)
\end{eqnarray*}%
since $\bar{\psi}$ is closed, so from Proposition \ref{PropComponents}, we
get the type decomposition of $-\mathcal{L}_{U}\bar{\psi}$%
\begin{eqnarray}
\pi _{7}\left( -\mathcal{L}_{U}\bar{\psi}\right) &=&\frac{1}{2}\left( 
\overline{\func{curl}}U\right) \wedge \bar{\varphi}+l.o.t. \\
\pi _{1\oplus 27}\left( -\mathcal{L}_{U}\bar{\psi}\right) &=&3\ast \mathrm{i}%
_{\varphi }\left( \bar{\nabla}_{(m}U_{n)}-\frac{1}{3}\left( \overline{\func{%
div}}U\right) \bar{g}_{mn}\right) +l.o.t.
\end{eqnarray}%
Since 
\begin{equation*}
\left. \frac{\partial \bar{\chi}}{\partial s}\right\vert _{s=0}=\left. \frac{%
\partial \bar{\psi}}{\partial s}\right\vert _{s=0}=-\mathcal{L}_{U}\bar{\psi}
\end{equation*}%
we find that 
\begin{eqnarray*}
\left. \frac{\partial X_{a}}{\partial s}\right\vert _{s=0} &=&\frac{1}{2}%
\left( \overline{\func{curl}}U\right) _{a}+l.o.t. \\
\left. \frac{\partial h_{mn}}{\partial s}\right\vert _{s=0} &=&\bar{\nabla}%
_{(m}U_{n)}-\frac{1}{3}\left( \overline{\func{div}}U\right) \bar{g}%
_{mn}+l.o.t.
\end{eqnarray*}%
and therefore, 
\begin{eqnarray*}
\left. \frac{\partial \left( \bar{\nabla}^{a}\func{Tr}h\right) }{\partial s}%
\right\vert _{s=0} &=&-\frac{4}{3}\bar{\nabla}^{a}\overline{\func{div}}%
U+l.o.t. \\
\text{ }\left. \frac{\partial \left( \bar{\nabla}_{m}X_{n}\bar{\varphi}_{\ \
\ }^{mna}\right) }{\partial s}\right\vert _{s=0} &=&\frac{1}{2}\left( \bar{%
\nabla}_{m}^{{}}\bar{\nabla}_{p}^{{}}U_{q}^{{}}\right) \bar{\varphi}_{\ \
n}^{pq}\bar{\varphi}_{{}}^{mna}+l.o.t. \\
&=&-\frac{1}{2}\nabla ^{2}U^{a}+\frac{1}{2}\bar{\nabla}^{a}\overline{\func{%
div}}U+l.o.t.
\end{eqnarray*}%
where in the last line we have used once again the identity (\ref{phiphi1}%
).Whence, the linearization \ $D_{\phi }V$ of $V$ at $\phi $ is given by 
\begin{equation}
\left( D_{\phi }V\right) \left( U\right) =\frac{1}{2}\nabla ^{2}U^{a}+\frac{1%
}{2}\bar{\nabla}^{a}\overline{\func{div}}U+l.o.t.
\end{equation}%
Given some non-zero vector field $\xi $, the principal symbol is now 
\begin{equation*}
\sigma _{\xi }\left( D_{\phi }V\right) \left( U\right) =\frac{1}{2}%
\left\vert \xi \right\vert ^{2}U^{a}+\frac{1}{2}\xi ^{a}\left\langle \xi
,U\right\rangle
\end{equation*}%
and hence 
\begin{equation*}
\left\langle \sigma _{\xi }\left( D_{\phi }V\right) \left( U\right)
,U\right\rangle =\frac{1}{2}\left\vert \xi \right\vert ^{2}\left\vert
U\right\vert ^{2}+\frac{1}{2}\left\langle \xi ,U\right\rangle ^{2}\geq 0
\end{equation*}%
with equality if and only if $U$ is identically zero. So indeed, the flow (%
\ref{ddtdiffeo}) is indeed strictly parabolic.
\end{proof}

Now are finally ready to prove the short-time existence and uniqueness for
the initial value problem (\ref{ivp1}). The proof is again modelled on \cite%
{BryantXu}.

\begin{theorem}
The initial value problem (\ref{ivp1}) has a unique solution for $t\in \left[
0,\varepsilon \right] $.
\end{theorem}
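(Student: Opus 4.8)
\emph{Plan and existence.} The plan is to carry out the DeTurck--trick scheme of Bryant and Xu \cite{BryantXu}: solve the gauge-fixed flow, remove the gauge by a family of diffeomorphisms to recover a genuine solution of (\ref{ivp1}), and then run the correspondence backwards for uniqueness. For existence, Corollary \ref{corrivp3sol} already gives a unique solution $\chi(t)$ of the gauge-fixed problem (\ref{ivp3}) on some $[0,\varepsilon]$; set $\psi(t)=\psi_{0}+\chi(t)$, which remains a definite (hence co-closed $G_{2}$-structure) $4$-form and is closed because $\chi$ is exact. Since $\psi(t)$ is now known and smooth, $V(\chi(t))$ as in (\ref{Vchi}) is a smooth time-dependent vector field on the compact manifold $M$, so it generates a unique family of diffeomorphisms $\phi(t)$ with $\phi(0)=\mathrm{Id}$ and $\tfrac{d}{dt}\phi(t)=-V(\chi(t))$ (pure ODE theory, no parabolicity needed). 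Put $\hat{\psi}(t)=\phi(t)^{\ast}\psi(t)$. Then $\hat{\psi}(0)=\psi_{0}$, $\hat{\psi}(t)$ is closed, and
\[
\frac{d}{dt}\hat{\psi}(t)=\phi(t)^{\ast}\!\left(\frac{d\psi}{dt}-\mathcal{L}_{V(\chi(t))}\psi\right)=\phi(t)^{\ast}\!\left(\Delta_{\psi}\psi+2d\bigl((A-\func{Tr}T_{\psi})\ast_{\psi}\psi\bigr)\right).
\]
Because the Hodge Laplacian, the Hodge star, the torsion trace $\func{Tr}T$ and $d$ are all natural under diffeomorphisms, and the metric associated to $\phi^{\ast}\psi$ is $\phi^{\ast}$ of the metric associated to $\psi$, the right-hand side equals $\Delta_{\hat{\psi}}\hat{\psi}+2d\bigl((A-\func{Tr}T_{\hat{\psi}})\ast_{\hat{\psi}}\hat{\psi}\bigr)$. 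Hence $\hat{\psi}$ solves (\ref{ivp1}).

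\emph{Uniqueness.} Suppose $\psi^{(1)}(t)$ and $\psi^{(2)}(t)$ both solve (\ref{ivp1}) on a common interval. For each $i$ I would solve the diffeomorphism equation (\ref{ddtdiffeo}), that is $\tfrac{d}{dt}\phi^{(i)}=-V\bigl(((\phi^{(i)})^{-1})^{\ast}\psi^{(i)}-\psi_{0}\bigr)$ with $\phi^{(i)}(0)=\mathrm{Id}$; by Lemma \ref{diffeoflowpara} this system is strictly parabolic, so standard parabolic theory provides a unique short-time solution (shrinking the time interval if necessary). Then $\bar{\psi}^{(i)}(t)=((\phi^{(i)}(t))^{-1})^{\ast}\psi^{(i)}(t)$ is closed, satisfies $\bar{\psi}^{(i)}(0)=\psi_{0}$, and — by the same naturality computation as above, now with the Lie-derivative term reinstated, matching exactly the linearization in Theorem \ref{thmmodcoflow} — solves the gauge-fixed flow (\ref{ivp3}). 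The uniqueness part of Corollary \ref{corrivp3sol} then forces $\bar{\psi}^{(1)}=\bar{\psi}^{(2)}=:\bar{\psi}$. Consequently both $\phi^{(1)}$ and $\phi^{(2)}$ satisfy $\tfrac{d}{dt}\phi=-V(\bar{\psi}-\psi_{0})$ with $\phi(0)=\mathrm{Id}$, which is now the flow equation of a fixed time-dependent vector field, so $\phi^{(1)}=\phi^{(2)}$ by ODE uniqueness, whence $\psi^{(1)}=(\phi^{(1)})^{\ast}\bar{\psi}=\psi^{(2)}$. A routine open--closed argument then promotes this to uniqueness on the whole existence interval $[0,\varepsilon]$.

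\emph{Main difficulty.} The genuinely hard analytic input is entirely packaged in results already in hand: the weak parabolicity of the gauge-fixed flow (Theorem \ref{thmmodcoflow}), which drives the Nash--Moser argument behind Corollary \ref{corrivp3sol}, and the strict parabolicity of the diffeomorphism equation (Lemma \ref{diffeoflowpara}). Beyond these, the only real care required is bookkeeping: checking that the pullback by $\phi(t)$ transports the \emph{entire} right-hand side — including the lower-order $d\hat{F}(\chi)$ terms and the evaluation of $V$ — to the gauge-fixed operator of the pulled-back structure, verifying that $\psi(t)$ and the relevant diffeomorphisms stay within the open set $\mathcal{U}$ of definite $4$-forms, and arranging the various time intervals consistently. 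All of these are standard, and the whole argument parallels \cite[\S4]{BryantXu} with $3$-forms replaced by $4$-forms.
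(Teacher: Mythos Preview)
Your argument is correct and follows the paper's proof essentially step for step: obtain the gauge-fixed solution from Corollary \ref{corrivp3sol}, integrate the ODE for $-V(\chi(t))$ to remove the gauge and produce a solution of (\ref{ivp1}), and for uniqueness use Lemma \ref{diffeoflowpara} to solve (\ref{ddtdiffeo}), pull the solutions back to (\ref{ivp3}), and conclude via its uniqueness together with ODE uniqueness for the diffeomorphisms. The only inaccuracy is cosmetic --- the $d\hat{F}(\chi)$ terms you mention in your closing paragraph live in the \emph{linearization} of Theorem \ref{thmmodcoflow}, not in the nonlinear right-hand side itself --- and does not affect the proof.
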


\begin{proof}
From Corollary \ref{corrivp3sol}, we have a short-time solution $\bar{\chi}=%
\bar{\psi}-\psi _{0}$ for (\ref{ivp3}). In (\ref{ddtdiffeo}) let $\bar{\chi}%
_{\phi }\left( t\right) =\bar{\chi}\left( t\right) ,$ so that it now becomes
an ODE, since $\bar{\chi}\left( t\right) $ is now a fixed family of $4$%
-forms, independent of the diffeomorphisms $\phi _{t}$:%
\begin{equation}
\left\{ 
\begin{array}{c}
\frac{d}{dt}\phi \left( t\right) =-V\left( \bar{\chi}\left( t\right) \right)
\\ 
\left. \phi \right\vert _{t=0}=Id%
\end{array}%
\right.  \label{ddtdiffeo2}
\end{equation}%
The ODE (\ref{ddtdiffeo2}) has a unique solution $\phi _{t}$. Now let 
\begin{equation}
\psi \left( t\right) =\left( \phi _{t}\right) ^{\ast }\bar{\psi}\left(
t\right) .
\end{equation}%
Since $\phi _{0}=Id$, 
\begin{equation*}
\psi \left( 0\right) =\psi _{0}\text{.}
\end{equation*}%
Also, 
\begin{equation*}
d\psi \left( t\right) =d\left( \left( \phi _{t}\right) ^{\ast }\bar{\psi}%
\left( t\right) \right) =\left( \phi _{t}\right) ^{\ast }d\bar{\psi}\left(
t\right) =0
\end{equation*}%
since diffeomorphisms commute with $d$ and $d\bar{\psi}\left( t\right) =0$.
Now let us show that $\psi \left( t\right) $ satisfies the flow equation (%
\ref{ivp1}). 
\begin{eqnarray}
\frac{d}{dt}\psi \left( t\right) &=&\frac{d}{dt}\left( \left( \phi
_{t}\right) ^{\ast }\bar{\psi}\left( t\right) \right)  \notag \\
&=&\left( \phi _{t}\right) ^{\ast }\left( \mathcal{L}_{-V\left( \bar{\chi}%
\right) }\bar{\psi}\left( t\right) \right) +\left( \phi _{t}\right) ^{\ast
}\left( \frac{d}{dt}\bar{\psi}\left( t\right) \right)  \label{ddtpsi}
\end{eqnarray}%
where we have used (\ref{ddtdiffeo2}). From (\ref{ivp3}), $\bar{\psi}\left(
t\right) $ satisfies the flow equation 
\begin{equation*}
\frac{d}{dt}\bar{\psi}\left( t\right) =\Delta _{\bar{\psi}}\bar{\psi}%
+2d\left( \left( A-\func{Tr}T_{\bar{\psi}}\right) \ast _{\bar{\psi}}\bar{\psi%
}\right) +\mathcal{L}_{V\left( \bar{\chi}\right) }\bar{\psi}
\end{equation*}%
Hence 
\begin{eqnarray*}
\left( \phi _{t}\right) ^{\ast }\left( \frac{d}{dt}\bar{\psi}\left( t\right)
\right) &=&\Delta _{\phi _{t}^{\ast }\bar{\psi}}\left( \phi _{t}^{\ast }\bar{%
\psi}\right) +2d\left( \left( A-\func{Tr}T_{\phi _{t}^{\ast }\bar{\psi}%
}\right) \ast _{\phi _{t}^{\ast }\bar{\psi}}\left( \phi _{t}^{\ast }\bar{\psi%
}\right) \right) +\left( \phi _{t}\right) ^{\ast }\left( \mathcal{L}%
_{V\left( \bar{\chi}\right) }\bar{\psi}\right) \\
&=&\Delta _{\psi }\psi +2d\left( \left( A-\func{Tr}T_{\psi }\right) \ast
_{\psi }\psi \right) +\left( \phi _{t}\right) ^{\ast }\left( \mathcal{L}%
_{V\left( \bar{\chi}\right) }\bar{\psi}\right)
\end{eqnarray*}%
So overall, from (\ref{ddtpsi}) we find 
\begin{equation*}
\frac{d}{dt}\psi \left( t\right) =\Delta _{\psi }\psi +2d\left( \left( A-%
\func{Tr}T_{\psi }\right) \ast _{\psi }\psi \right) .
\end{equation*}%
Thus indeed, $\psi \left( t\right) $ solves the system (\ref{ivp1}) for a
short time $0\leq t\leq \varepsilon $.

Now let us prove uniqueness. Suppose $\psi _{1}$ and $\psi _{2}$ are two
solutions of (\ref{ivp1}), and let $\phi _{1}$ and $\phi _{2}$ be
corresponding solutions of (\ref{ddtdiffeo}). For $i=1,2$ define 
\begin{equation}
\bar{\psi}_{i}=\left( \phi _{i}^{-1}\right) ^{\ast }\psi _{i}.
\end{equation}%
Both $\bar{\psi}_{i}$ are clearly closed, and satisfy 
\begin{eqnarray}
\frac{d}{dt}\bar{\psi}_{i}\left( t\right) &=&\left( \phi _{i}^{-1}\right)
^{\ast }\left( \mathcal{L}_{-\left( \phi _{i}^{-1}\right) _{\ast }V\left( 
\bar{\chi}_{i}\right) }\psi _{i}\left( t\right) \right) +\left( \phi
_{i}^{-1}\right) ^{\ast }\left( \frac{d}{dt}\psi _{i}\left( t\right) \right)
\\
&=&\mathcal{L}_{V\left( \bar{\chi}_{i}\right) }\left( \left( \phi
_{i}^{-1}\right) ^{\ast }\psi _{i}\right) +\left( \phi _{i}^{-1}\right)
^{\ast }\left( \Delta _{\psi _{i}}\psi _{i}+2d\left( \left( A-\func{Tr}%
T_{\psi _{i}}\right) \ast _{\psi _{i}}\psi _{i}\right) \right)  \notag \\
&=&\mathcal{L}_{V\left( \bar{\chi}_{i}\right) }\bar{\psi}_{i}+\Delta _{\bar{%
\psi}_{i}}\bar{\psi}_{i}+2d\left( \left( A-\func{Tr}T_{\bar{\psi}%
_{i}}\right) \ast _{\bar{\psi}_{i}}\bar{\psi}_{i}\right)  \notag
\end{eqnarray}%
This is precisely the flow equation (\ref{ivp3}). Now, $\phi _{i}\left(
0\right) =Id,$ and $\psi _{i}\left( 0\right) =\psi _{0},$ so $\bar{\psi}_{i}$
have same initial conditions. However, from Corollary \ref{corrivp3sol}, we
know that the system (\ref{ivp3}) has a unique solution, whence $\bar{\psi}%
_{1}=\bar{\psi}_{2}$. Then, $\phi _{1}$ and $\phi _{2}$ satisfy the same ODE
(\ref{ddtdiffeo2}) with the same initial conditions, so are also equal.
Therefore, $\psi _{1}=\psi _{2}$.
\end{proof}

\section{Concluding remarks}

We have thus found a modified Laplacian coflow of co-closed $G_{2}$%
-structures given by 
\begin{equation*}
\frac{d}{dt}\psi \left( t\right) =\Delta _{\psi }\psi +2d\left( \left( A-%
\func{Tr}T_{\psi }\right) \ast _{\psi }\psi \right)
\end{equation*}%
for a family of closed $4$-forms $\psi \left( t\right) $ and a constant $A$.
Given an initial condition $\psi \left( 0\right) =\psi _{0}$ this flow was
found to have a unique short-time existence. Moreover, if along the flow $%
\func{Tr}T_{\psi }$ remains non-negative and less than or equal to $\frac{4}{%
3}A$, the volume functional $V$ (\ref{volfuncpsi}) increases monotonically.
An alternative necessary and sufficient condition for this is a bound for
the total scalar curvature, in terms of $\func{Tr}T_{\psi }$:%
\begin{equation*}
\int_{M}R\mathrm{vol}\leq 2\int_{M}\func{Tr}T\left( 2A-\func{Tr}T\right) 
\mathrm{vol}
\end{equation*}%
Further questions can be asked about this flow, in particular, the long-term
existence of solutions. In general, one would expect singularities to
develop in finite time, but perhaps there are some initial conditions which
lead to smooth long-term solutions. A more likely property is the stability
of solutions - if the initial condition is sufficiently close to a
torsion-free $G_{2}$-structure, whether that would lead to a long-term
solution. It is possible that the method used by Xu and Ye in \cite{XuYe}
for the Laplacian flow of $\varphi $ could be adapted in this scenario. This
will be the subject of further study. Another interesting question is
whether given solutions of the modified flow (\ref{ivp1}) one can find
solutions of the original Laplacian coflow of $\psi $ (\ref{orig-coflow}).
The answers to these questions should lead to a better understanding of the
relationships between different torsion classes of $G_{2}$-structures, and
torsion-free $G_{2}$-structures, in particular.

\bibliographystyle{jhep-a}
\bibliography{refs2}

\end{document}